\newtheorem{theorem}{Theorem}
\newtheorem{lemma}[theorem]{Lemma}
\newtheorem{corollary}[theorem]{Corollary}
\newtheorem{claim}[theorem]{Claim}
\newtheorem{observation}[theorem]{Observation}
\newcommand\wpn{{\mathop{\rm wpn}}}
\newcommand\Forb{{\mathop{\rm Forb}}}
\newcommand\Co{{\mathop{\rm Co}}}
\title{The global structure of a typical graph without $H$ as an induced subgraph when $H$ is a cycle.} 
\author{B. Reed\thanks{Institute of Mathematics, Academia Sinica, Taiwan; \texttt{bruce.al.reed@gmail.com}.}}
\begin{document}
\maketitle  

\begin{abstract}
One way to certify that a graph does not contain an induced cycle of length six is to provide a partition of its
vertex set into  (i) a stable set, and (ii) a graph containing no stable set of size three and no induced matching of size two.   
We show that   almost every  graph  which  does not contain a cycle of length six as an induced subgraph
has such a certificate. We obtain similar characterizations of the structure of almost all  graphs which contain no induced cycle
of  length  $k$ for all  even $k$ exceeding six.  (Similar results were obtained for  $k=3$  by 
Erd\H os, Kleitman, and Rothschild in 1976,  for  $k =4,5$ by Pr\"omel and  Steger in 1991 and for odd $k$  exceeding 5 by Balogh and Butterfield in 2009.)  
We prove that a simiiar theorem for all $H$ holds up to the deletion of a set of $o(|V(G)|)$ vertices and ask for which $H$ the chartacterizattion holds 
fully.

\end{abstract}

\section{An Overview}

We say that a graph $G$ is {\em $H$-free} if $G$ does not contain an induced copy of $H$.

We discuss a possible characterization of  the structure of  typical $H-free$ graphs,  survey previous results proving the characterization 
holds   for certain special $H$, and prove the characterization for even cycles of length at least six, which combined with previous results shows that it holds 
for all cycles and their complements. In work subsequent to this, Reed and Yuditsky proved that the structural characterization holds for all trees.

An earlier version of this manuscript conjectured that the characterization  holds for  all $H$. This was disproven by Norine and Yuditsky\cite{NY}, so the conjecture has been removed. 
In this paper we show that that  for every $H$ there is an $\epsilon>0$ such that  the conjecture holds after the deletion of a set of $O(n^{1-\epsilon})$  exceptional vertices and ask 
 if we can improve on the upper bound on the size of this exceptional set (in general or for specfic $H$).

An early result in this area  was obtained by 
Erd\H os, Kleitman, and Rothschild\cite{EKR} over 35 years ago. They  characterized the structure of   typical graphs without a cycle of 
length three precisely: the vertex set of almost every $C_3$-free
graph is bipartite.

It was not until 1991, however, that Pr\"omel and Steger \cite{PS91,PS92,PSBerge,PS93}   suggested an approach that could be applied to all $H$. Using this approach, they showed that  almost every $C_4$-free graph is a split graph
(a graph is {\em split} if its vertex set can be partitioned into a 
stable set and a clique).  They also showed  that the vertex set of almost every  $C_5$-free graph can be partitioned into either 
(i) a clique and the disjoint union of cliques or (ii) a stable set and a complete multipartite graph. In addition, for every $H$, they  obtained upper  and lower bounds on the number of  
$H$-free graphs on $n$ vertices whose exponents differed by a multiplicative factor of $1+o(1)$. 

Subsequently, Balogh and Butterfield\cite{BB11} proved that for $k \ge 4$ almost every  $C_{2k+1}$-free graph can be partitioned into $k$ cliques while almost every $C_7$ free graph  can be partitioned either into 3 cliques or a stable set and 2 cliques.    

In Section \ref{C6}, we prove that almost every $C_6$-free graph is the disjoint union of 
a stable set and a graph containing no stable set of size three and no induced matching with two edges. In Section \ref{C2l} we present similar results for longer even cycles, which we will state precisely below. 
Before doing so, we discuss a characterization of   the structure of typical $H$-free graphs which holds for many $H$. To do so we need some definitions.

By $\Forb_H$ we mean the family of $H$-free graphs, and by $\Forb_H^n$ the family of $H$-free graphs on 
the vertex set $\{1,\dots,n\}$, which we denote $V_n$. The  {\em witnessing partition number} of $H$, denoted $\wpn(H)$, is  the  maximum $k$ such that for some pair $(c_H,s_H)$ which  sum to $k$,  
there is no partition of $V(H)$ into $c_H$ cliques and $s_H$ stable sets.

We define an {\em H-freeness witnessing  $k$-sequence} to be a sequence ${\cal S}=({\cal F}_1,\dots,{\cal F}_k)$ of   hereditary   
families of graphs such that for every partition $X_1,\dots,X_k$ of the vertices of $H$ into $k$ sets,  there is an $i$ such that $H[X_i]$ is not an element of  ${\cal F}_i$. 
For any such family and any $G$ for which $V(G)$ can be partitioned into $X_1,\dots, X_k$  such that $G[X_i]$ is in ${\cal F}_i$  for each $i$ we see that $G$ is $H$-free. 
We say that $X_1,\dots, X_k$ is an {\em $H$-freeness  witnessing partition for $G$, certified by ${\cal F}_1,\dots, {\cal F}_k$}.

Let ${\cal S}=({\cal F}_1,\dots,{\cal F}_k)$ be a ($H$-freeness) witnessing $k$-sequence.
If we set  
\begin{equation}\label{jdef}
J_i=\{ L: \mbox{$L$ is an induced subgraph of $H$ but not in ${\cal F}_i$}\}
\end{equation}
and let ${\cal F}_i^\prime$ be 
the set of graphs which have no induced subgraph in $J_i$ then ${\cal S}^\prime = ({\cal F}_1^\prime,\dots,{\cal F}_k^\prime)$  
is also an $H$-freeness witnessing sequence.   We  observe that any witnessing partition certified by ${\cal S}$ is also certified by ${\cal S}^\prime$.  
We say a witnessing sequence is {\em canonical} if for each ${\cal F}_i$ there is some  set $J_i$ of induced subgraphs of $H$ 
such that ${\cal F}_i$ consists of those graphs not containing any graph in $J_i$ as an induced subgraph.  Note that our observation shows 
that we may restrict our attention to canonical witnessing sequences.

Every $H$-{\em free} graph has an  $H$-freeness witnessing partition  with all but the last  part empty, 
certified by a seuqence where every family is empty 
except the last which consists of  all the $H$-free graphs. We are interested in partitions certified by families 
where each ${\cal F}_i$ contains arbitrary large graphs. Ramsey theory tells us this is equivalent to requiring that 
each contains either all cliques or all stable sets. We call such sequences {\it really canonical}. 

\begin{claim}
If ${\cal S}=({\cal F}_1,\dots,{\cal F}_{\wpn(H)})$ is a   really canonical witnessing $\wpn(H)$-sequence,
then  every $J_i$ contains a 
bipartite  graph, the complement of a bipartite graph and a split graph.
\end{claim}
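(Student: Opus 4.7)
My plan is to prove each of the three membership assertions by contrapositive, using a common template that differs only in the bookkeeping. Suppose for some index $i$ the set $J_i$ contains no bipartite graph (respectively, no co-bipartite graph, no split graph). Since each of the three graph classes --- bipartite, co-bipartite, and split --- is closed under taking induced subgraphs, and since ${\cal F}_i$ consists precisely of those graphs with no induced subgraph in $J_i$, the family ${\cal F}_i$ must contain \emph{every} bipartite (resp.\ co-bipartite, split) graph. The goal is then to produce, in each case, a partition of $V(H)$ into $k := \wpn(H)$ parts that is certified by ${\cal S}$, contradicting the witnessing property.

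Using the really canonical hypothesis, designate each $j \neq i$ as type $C$ (if ${\cal F}_j$ contains all cliques) or type $S$ (if ${\cal F}_j$ contains all stable sets); let $c$ and $s$ be the respective counts, so $c + s = k - 1$. The essential leverage is the maximality in the definition of $\wpn(H)$: since $k + 1 > \wpn(H)$, for every pair $(c', s')$ with $c' + s' = k + 1$ there exists a partition of $V(H)$ into $c'$ cliques and $s'$ stable sets.

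In the bipartite case, apply this with $(c', s') = (c, s+2)$: distribute the $c$ cliques across the type-$C$ indices, assign $s$ of the stable sets to the type-$S$ indices, and let $X_i$ be the union of the remaining two stable sets. Then $H[X_i]$ is bipartite, hence in ${\cal F}_i$, while every other $H[X_j]$ is a clique or stable set lying in ${\cal F}_j$; this yields a certified $k$-partition, contradiction. The co-bipartite case is identical with $(c', s') = (c+2, s)$, merging the two extra cliques into $X_i$; the split case uses $(c', s') = (c+1, s+1)$, merging one clique and one stable set into $X_i$.

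The main (and essentially only) conceptual step is recognizing how to bridge the $k$-part witnessing picture with the $(k{+}1)$-part unconditional clique/stable partition guaranteed by the maximality of $\wpn(H)$: collapsing two of the $(k{+}1)$ parts into the single part $X_i$ converts a mere existence statement about such partitions into an outright refutation of the witnessing property. Once this accounting is arranged, verifying that $H[X_i]$ lies in ${\cal F}_i$ is immediate from the closure of the three classes under induced subgraphs, and the argument is complete.
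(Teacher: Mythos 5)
Your proof is correct and follows essentially the same approach as the paper's: contrapose, use that each $\mathcal F_j$ for $j\neq i$ contains all cliques or all stable sets, invoke the maximality of $\wpn(H)$ to get a partition of $V(H)$ into $\wpn(H)+1$ cliques and stable sets, and collapse two of those parts (two stable sets, two cliques, or one of each) into $X_i$ to produce a certified $\wpn(H)$-partition, contradicting the witnessing property. The only difference is cosmetic bookkeeping: you parametrize by the number of type-$C$/type-$S$ indices among $j\neq i$, which renders the "$s\ge 2$" check the paper makes unnecessary.
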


\begin{proof}
Suppose first that $J_i$ contains no bipartite graph: reindexing we can assume $i=\wpn(H)$.
For each $j$ not equal to  $\wpn(H)$, $J_j$ contains either no cliques or no stable sets.
We let $c$ be the number of $j$ in $\{1,\dots,\wpn(H)-1\}$ such that $J_j$ contains no cliques,
and set $s=\wpn(H)+1-c\ge 2$.
By reindexing we may assume that $J_1,\dots,J_c$ all contain no cliques
Thus $\mathcal F_1,\dots,\mathcal F_c$ each contain all cliques,
and similarly $\mathcal F_{c+1},\dots,\mathcal F_{\wpn(H)-1}$ each contain all stable sets.  
Furthermore, $\mathcal F_{\wpn(H)}$ contains all bipartite graphs, as $J_{\wpn(H)}$ contains no bipartite graphs.

By the definition of
$\wpn(H)$ the vertex set of $H$ can be partitioned  into cliques 
$K_1,\dots,K_c$  and stable sets $Y_{c+1},\dots,Y_{\wpn(H)+1}$. 
For $j$ at most $c$, we let $X_j$ be $V(K_j)$.
For $j$ between $c+1$ and $\wpn(H)-1$ we let $X_j$ be $Y_j$.
Finally, we let $X_{\wpn(H)}=Y_{\wpn(H)} \cup Y_{\wpn(H)+1}$. 
But now for all $i$ between 1 and $\wpn(H)$, the graph $H[X_j]$ is in ${\cal F}_j$ 
(note that $H[X_{\wpn(H)}]$ is bipartite and so belongs to $\mathcal F_{\wpn(H)}$).
This contradicts the fact that ${\cal S}$ 
is an $H$-freeness  witnessing sequence.

We obtain similar contradictions if some $J_i$ contains no complement of a bipartite graph or no split graph.
In the first case, we apply the argument above with everything complemented. In the second case, we partition $H$
into $c+1$ cliques and $s-1$ stable sets, and let $X_{\wpn(H)}$ be formed by taking the vertices of one clique and one stable set.
\end{proof}

For many $H$ the following holds:  for almost every  graph $G$   in $\Forb_H^n$, there is a witnessing partition of $V_n$  certified 
by a  really canoncial witnessing $\wpn(H)$-sequence ${\cal S}=({\cal F}_1,\dots,{\cal F}_k)$. 

The results discussed above all show that this characterization hold for specific $H$.  
Combined with the results in this paper,   they  show the characterization  holds   whenever $H$ is a cycle or the 
complement of a cycle. In subsequent work, Reed and Yuditsky have  verified the charcterization whenever $H$ is a tree or the complement of the tree. 

A review of     Promel and Steger's 
 bounds on the number of  graphs in $\Forb_H^n$ for arbitrary $H$,  as a function  of $\wpn(H)$ and $n$ is useful for understanding the form the 
 characterization takes. 
 
Their result supports our decision to  focus on  partitions with $\wpn(H)$ sets and a witnessing sequence such that  each  ${\cal F}_i$  contains arbitrarily large graphs.  

To obtain a lower bound on $|\Forb_H^n|$ involving $\wpn(H)$, Promel and Steger  considered a fixed partition  $S_1,\ldots,S_{\wpn(H)}$  of $V_n$ such that for each $i,$ we have $\lfloor\frac{n}{\wpn(H)} \rfloor \le |S_i| \le \lceil \frac{n}{\wpn(H)} \rceil$. Clearly, every graph $G$ with vertex set $V_n$ in which $S_i$ is a clique  if $i$ is at most $c_H$ and stable otherwise is in $\Forb_H^n$. Since  we are free to choose which edges between the partition elements are in  $G$, it follows that $\Forb_H^n$ has size at least $2^{(1-\frac{1}{wpn(H)}){n \choose 2}}$. 

To obtain an upper bound on $|\Forb_H^n|$ involving  $\wpn(H)$, Promel and Steger (implicitly) showed that for every $H$ we can define a family of graphs $J_H$ such that  letting $J_H^n$ be the family of graphs in $J_H$ 
on $V_n$ we have: (i) the number of graphs in $J_H^n$ is $2^{o({n \choose 2})}$, and  (ii)  for almost every graph $G$  in $\Forb_H^n$, there is a partition $S_1,\dots,S_{\wpn(H)}$ of $V_n$ such that for every $i$, the subgraph $G[S_i]$ of $G$ induced by $S_i$ is
in $J_H$.   Now,  the number of  partitions  of $V_n$ into $\wpn(H)$ sets is 
at most $\wpn(H)^n$ which is $2^{o(n^2)}$. So,  provided $\wpn(H)$ is at least 2,  summing over all possible partitions and choices for the $G[S_i]$ yields that $|\Forb_H^n|=2^{(1+o(1))(1-\frac{1}{\wpn(H)}){n \choose 2}}$.

Combining the  upper bound   and the lower bound with a very short calculation actually allows us to say something about the size of the $S_i$ in the Promel-Steger partition of typical members of $\Forb_H^n$. For any particular partition $S_1,\dots,S_{\wpn(H)}$ of $V_n$ the number of graphs $G$ such that $G[S_i]$ is in  $J_H$ is by definition $2^{o({n \choose 2})}2^m$ where $m$ is the number of pairs of vertices of $V_n$ lying in distinct
elements of the partition. Setting $a_i=|S_i-\frac{n}{\wpn(H)}|$ and $d$ to be the sum of the squares of the $a_i$, 
we see that $m$ is essentially $(1-\frac{1}{\wpn(H)}){n\choose 2}-2d$. It follows from our lower bound on the size of $\Forb_H^n$ that  
we can strengthen the result of Promel and Steger ensuring a partition of almost every $H$-free graph mentioned in the last  paragraph by adding, 
for any $\epsilon>0$, the condition that every $S_i$ satisfies $\left| |S_i|-\frac{n}{\wpn(H)}\right| \le \epsilon n$.
This result  suggests that in characterizing $H$-free graphs we should be interested in partitions into $\wpn(H)$ parts each of which has about the same size. Hence, we should focus on really canonical $H$-freeness witnessing $wpn(H)$-sequences.

We hope this discussion provides some basis  for the precise form our characterization  takes. We now discuss those $H$ for which it holds. 

A cycle of length three    can be partitioned into a clique, and into three stable sets but not into two stable sets.
Thus $\wpn(C_3)$ is 2 and the set consisting of two copies of the family of all stable sets is a
$C_3$-freeness witnessing sequence. Thus, the  result of Erdos, Kleitman, and Rothschild mentioned above 
tells us that  our conjecture holds when $H$ is $C_3$. 

For $l$ at least four, the   cycle of length  $2l+1$  can be partitioned into three stable sets, into two stable sets  and a clique,
into a stable set of size $3$ and $l-1$ cliques, and into $l+1$ cliques. However, it cannot be partitioned 
into $l$ cliques. So, $\wpn(C_{2l+1})$ is $l$ and  the multiset consisting of $l$ copies of the family of all cliques is a
$C_{2l+1}$-freeness witnessing sequence. Thus, a  result of Balogh and Butterfield\cite{BB11}   mentioned above 
tells us that  our characterization holds when $H$ is $C_{2l+1}$. 

The   cycle of length $4$  can be partitioned into two stable sets  and into two cliques, However, it cannot be partitioned 
into a clique and a stable set. So, $\wpn(C_4)$ is $2$ and  the family of all cliques together with the family of all stable sets form a
$C_4$-freeness witnessing sequence. Thus  a result of Pr\"omel and Steger  mentioned above 
tells us that  our characterization  holds when $H$ is $C_4$. 

The   cycle of length $5$  can be partitioned into three stable sets, into three cliques, into two stable sets and a clique, and into two cliques and a stable set, However, it cannot be partitioned  into two  stable sets. 
So, $\wpn(C_5)$ is $2$. Furthermore, $C_5$ cannot be partitioned into a clique and a graph 
containing no $P_3$. Thus  the multiset consisting  of the  family of all cliques  together with the family of all disjoint unions of 
cliques  form a $C_5$-freeness witnessing sequence. Since $C_5$ is self-complementary so does the set consisting of  the family of all stable sets and
the family of all stable sets and complete multipartite graphs.   Thus  a result of Promel and Steger  mentioned above 
tells us that  our characterization  holds when $H$ is $C_5$. 

The   cycle of length $7$  can be partitioned into three stable sets,into four cliques,   into a stable set and three cliques, and into a clique and two stables sets.  
However, it cannot be partitioned  into three  cliques or into two cliques and a stable set. So, $\wpn(C_7)$ is $3$
and a result of Balogh and Butterfield mentioned above  shows that our characterization  holds when $H$ is $C_7$. 

The   cycle of length $6$  can be partitioned into two stable sets, into three cliques, and  into a stable set and two cliques.  
However, it cannot be partitioned  into two  cliques. So, $\wpn(C_6)$ is $2$. Furthermore, $C_6$ cannot be partitioned into a stable set and the complement of a graph of girth 5.
Thus  the
following theorem, proven in Section \ref{C6}, shows that our characterization holds  holds when $H$ is $C_6$. 

\begin{theorem}
\label{C6.thm}
Almost every $C_6$-free graph can be partitioned into a stable set and the complement of a graph of girth 5.
\end{theorem}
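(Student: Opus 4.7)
The plan is to follow the two-stage paradigm of Pr\"omel--Steger and Balogh--Butterfield, exploiting the fact that $\wpn(C_6)=2$ with the canonical really canonical witnessing sequence whose families are the stable sets and the complements of graphs of girth~$5$. The discussion just before the theorem statement already verifies that every graph admitting such a partition is $C_6$-free, so the task is to produce such a partition for almost every $C_6$-free graph.

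First, for the approximate structure, I would obtain $|\Forb_{C_6}^n|\ge 2^{(1/2+o(1))\binom{n}{2}}$ by direct counting: fix a balanced partition $V_n=S\cup R$, declare $G[S]$ edgeless, let $G[R]$ be an arbitrary complement of a girth-$5$ graph, and choose the $S$--$R$ edges freely. Pr\"omel--Steger's general upper bound, reviewed in the overview, matches this up to the $o(1)$ in the exponent. Combined with the balancing argument at the end of the overview, this implies that almost every $G\in\Forb_{C_6}^n$ admits a partition $V_n=S_0\cup R_0$ with $|S_0|,|R_0|=(1/2\pm o(1))n$ in which $G[S_0]$ contains only $o(n^2)$ edges and $\overline{G[R_0]}$ contains only $o(n^2)$ copies of $K_3$ and $C_4$.

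Second, for the exact structure, I would upgrade this approximate partition to an exact one by vertex-by-vertex reassignment. For each $v$ I compare the number of ``defects'' $v$ creates on each side---edges of $G[S_0]$ incident to $v$, versus induced copies of $I_3$ or $2K_2$ in $G[R_0]$ using $v$---and move $v$ to whichever side minimises this count. The loop terminates in a partition with $|S_0|,|R_0|$ still close to $n/2$; what must be shown is that for almost every $C_6$-free $G$ the terminal partition is defect-free. I plan to do this by a standard ``bad-vertex'' counting argument: whenever a defect survives, exhibit an explicit induced $C_6$ formed by combining the defect with a short walk through typical cross-edges, and then bound the number of $C_6$-free graphs containing such a configuration by a geometric series whose total is $o(|\Forb_{C_6}^n|)$.

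The main obstacle will be exactly this combinatorial count. In the Balogh--Butterfield setting each target part is a clique, a rigid object, so defect-counting is essentially one-parameter; here the ``complement of a girth-$5$ graph'' family is rich---of size roughly $2^{\Theta(n^{3/2})}$ on $n$ vertices---so the counting must be tightened to reflect this richness. The crucial local gadgets I expect to exploit are: (i) an induced $I_3$ in $R_0$ joined through two vertices of $S_0$ with appropriate neighbourhoods to form an induced $C_6$, and (ii) an induced $2K_2$ in $R_0$ joined to a defect edge in $S_0$ through typical cross-edges, again producing an induced $C_6$. Showing that one of these gadgets is essentially unavoidable whenever any defect remains---and doing so with a bound strong enough to beat the $2^{\Theta(n^{3/2})}$ factor coming from the flexibility of the target structure on $R_0$---is the technical heart of the argument; once it is in hand, the exact partition asserted by the theorem is read off directly.
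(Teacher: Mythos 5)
Your two-stage outline is broadly in the spirit of the paper's argument, but each stage has a genuine gap that you yourself flag but do not close, and as written a couple of the steps would not go through.

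For the first (approximate) stage, Pr\"omel--Steger's $2^{(1/2+o(1))\binom n2}$ upper bound does not by itself give a partition $V_n=S_0\cup R_0$ in which $G[S_0]$ has $o(n^2)$ edges \emph{and} $\overline{G[R_0]}$ has $o(n^2)$ triangles and $C_4$'s: their machinery only puts each part into a ``small'' family without distinguishing which part plays the stable-set role and which the girth-5 role, and there are several really canonical $C_6$-freeness witnessing $2$-sequences (stable + complement-of-girth-5, $P_4$-free + clique, complete multipartite + complement-of-star, complement-of-matching + split), all of which have to be ruled in or out. The paper achieves the needed starting point via the Alon--Balogh--Bollob\'as--Morris framework (Theorem~\ref{Hstruc.thm}) to obtain an exceptional set of size $n^{1-\gamma}$, and then a substantial additional argument (Claims~\ref{c21}, \ref{c22}, \ref{c22new}, \ref{nc1}) to pin down which witnessing sequence actually applies to the typical graph. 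Your vertex-by-vertex reassignment step is also not obviously well-defined on the $R_0$ side --- the defects there are triangles and $C_4$'s of $\overline{G[R_0]}$, counted over $\Theta(n^3)$ or $\Theta(n^4)$ subsets, so moving one vertex can increase the defect count elsewhere, and there is no guarantee the greedy loop terminates at a defect-free partition rather than a local minimum with $\Theta(n)$ defects. The paper avoids this entirely: Claim~\ref{C6greatclaim} shows that only a \emph{constant} number of vertices need to be excluded, and the final section places each of them in the part on which it is ``extreme,'' which is a one-shot, not an iterative, reassignment.

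For the second (exact) stage, the decisive ingredient you are missing is not the count $2^{\Theta(n^{3/2})}$ (Theorem~\ref{MStheorem}), which you cite, but the \emph{density} statement (Theorem~\ref{belasam}): almost every girth-$5$ graph on $V_n$ has at least $\epsilon n^{3/2}$ edges, hence $\overline{G[X_1]}$ typically contains $\Theta(n)$ disjoint edges and $G[X_1]$ contains $\Theta(n)$ disjoint $P_3$'s and $\overline{P_3}$'s (the families $\mathcal W_1,\mathcal W_2$ in Claim~\ref{C6greatclaim}). Without this your gadgets (i) and (ii) can simply fail to exist: if $\overline{G[R_0]}$ is nearly edgeless (so $G[R_0]$ is nearly a clique), there are no induced $2K_2$'s or nontrivial $E_3$'s in $R_0$ to combine with a stray edge in $S_0$, and the defect-to-$C_6$ conversion breaks down. (Also, your gadget (i) as stated has only five vertices; you need three $S_0$-vertices, not two.) Finally, neither stage addresses the overcounting issue handled by Claim~\ref{ourclaim2}: a good graph can a priori admit several certifying partitions, so bounding the number of bad graphs partition-by-partition against the number of good (graph, partition) pairs does not directly bound the ratio of bad graphs to good graphs unless one first shows that good graphs have $O(1)$ certifying partitions. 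These are precisely the pieces the paper supplies; without them the argument is an outline of where the work lies rather than the proof itself.
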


For $l>5$, the   cycle of length $2l$  can be partitioned into two stable sets, into $l$ cliques,  and into a stable set of size $4$  and $l-2$ cliques,  However, it cannot be partitioned  into $l-1$ cliques. 
So $\wpn(C_{2l})$ is $l-1$. Furthermore, $C_{2l}$ cannot be partitioned into $l-2$ cliques and a graph which has at most 3 vertices or has a 
disconnected  complement or contains $C_3$. So the following theorem, proven in Section \ref{C2l}, shows that our characterization  holds when $H$ is $C_{2l}$. 

\begin{theorem}
\label{C2l.thm}
For $l>5$,
almost every $C_{2l}$-free graph can be partitioned into $l-2$ cliques and the complement of a graph  which is the disjoint union of stars and triangles.
\end{theorem}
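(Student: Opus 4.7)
The plan is to mimic the proof of Theorem~\ref{C6.thm}, adapted to the fact that $\wpn(C_{2l})$ equals $l-1$ rather than $2$. The argument proceeds in three stages.

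First, I would invoke the general structural result discussed in the abstract and the introductory review of the Pr\"omel--Steger framework to obtain, for almost every $G\in\Forb_{C_{2l}}^n$, a partition $V(G)=E\cup X_1\cup\cdots\cup X_{l-1}$ with $|E|=O(n^{1-\epsilon})$, with each $|X_i|=(1+o(1))n/(l-1)$, and such that $G[X_i]$ is a clique for $i\le l-2$ and $\overline{G[X_{l-1}]}$ is approximately a disjoint union of stars and triangles. That this particular target family arises is justified in the paragraph immediately preceding the theorem statement: every really canonical $C_{2l}$-freeness witnessing $(l-1)$-sequence must have its $(l-1)$-th family contained in the (hereditary) class of graphs whose complement is a disjoint union of stars and triangles, because $C_{2l}$ admits none of the three obstructions listed (few vertices, disconnected complement, induced $C_3$).

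Second, I would prove an absorption lemma: for each $v\in E$, if $v$ cannot be placed in one of the $X_i$ consistently with the target structure---as a clique vertex for some $i\le l-2$, or so that $\overline{G[X_{l-1}\cup\{v\}]}$ remains a disjoint union of stars and triangles---then $G$ contains an induced $C_{2l}$. The key combinatorial input is that, for $l>5$, the cycle $C_{2l}$ has length at least $12$: one can route an induced copy through the parts by taking two consecutive vertices from each of the $l-2$ clique parts and an induced path of length $4$ inside $X_{l-1}$. Any obstruction to absorbing $v$ produces a small induced configuration---a wrong-sided edge or non-edge to a clique part, an induced $C_3$ in $X_{l-1}$ through $v$, or an induced $P_4$ in $\overline{G[X_{l-1}]}$ featuring $v$---which one completes to an induced $C_{2l}$ by using the remaining clique parts as rigid connectors together with an appropriate induced path of length $4$ inside $X_{l-1}$. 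After absorbing $E$, the partition is made exact by reclassifying a further constant number of boundary vertices.

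Third, a standard entropy/counting argument closes the theorem: the number of $C_{2l}$-free graphs for which the absorption in the previous step fails at even one vertex is at most $2^{(1-1/(l-1))\binom{n}{2}-\omega(1)}$, whereas $|\Forb_{C_{2l}}^n|\ge 2^{(1-1/(l-1))\binom{n}{2}}$ from the balanced-partition lower-bound construction reviewed in the overview. Hence the fraction of $C_{2l}$-free graphs to which the argument fails to apply is $o(1)$.

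The hard part will be the second step. The routing arguments must handle every possible local violation simultaneously, and the rigid stars-and-triangles structure of $\overline{G[X_{l-1}]}$ supplies only a restricted palette of edges inside $X_{l-1}$ for completing an induced $C_{2l}$ without introducing spurious chords. This is precisely where the hypothesis $l>5$ enters: one needs enough clique parts to have slack for routing around local obstructions, and girth $2l\ge 12$ to prevent the clique parts from short-circuiting into induced chords across the path through $X_{l-1}$.
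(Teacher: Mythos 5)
The central step of your proposal, the ``absorption lemma'' in your second stage, is false as stated, and this is not a technical gap but a conceptual one. You claim that if a vertex $v\in E$ cannot be placed into some $X_i$ consistently with the target structure, then $G$ contains an induced $C_{2l}$. If that deterministic implication held, it would follow that \emph{every} $C_{2l}$-free graph satisfying the conclusions of your first stage has a full witnessing partition. But the paper's whole argument is organized around the fact that ``bad'' graphs, namely $C_{2l}$-free graphs admitting no really canonical witnessing partition, genuinely exist and must be \emph{counted}, not eliminated. The reason the deterministic claim fails is concrete: the cross-edges between $X_1,\dots,X_{l-1}$ are essentially unconstrained by the near-partition, so when you try to route an induced $C_{2l}$ through two consecutive vertices in each clique part together with $v$ and a $P_4$ in $X_{l-1}$, you have no control over the chords among those chosen vertices. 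For a ``typical'' choice of cross-edges the routing succeeds, but for atypical choices it does not, and those atypical choices are precisely the bad graphs you must bound. Your third stage then acknowledges that absorption may fail, which contradicts your second stage; the two cannot both hold.

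The paper's proof replaces your absorption lemma with a bipartite switching/counting argument and never asserts that a single unplaceable vertex forces an induced $C_{2l}$. Concretely, it defines atypical sets and the collection $\mathcal A$ in Section~4, shows via Claim~\ref{claimY} and Claim~\ref{c21} that the number of graphs with many atypical sets is exponentially negligible, then introduces a strong core (Claim~\ref{strongcore}) and iterated exceptional sets $A',A''$ to drive the exceptional set $Bad$ down to $O((\log n)^6)$ in Claim~\ref{greatclaim}. Only at that point does a comparison with the count of good graphs become feasible, and even then the overcounting of good graphs by multiple witnessing partitions has to be controlled separately by Claim~\ref{ourclaim2}. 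Your sketch does not address this overcounting issue at all: the number of (graph, partition) pairs can exceed the number of good graphs, and without an argument like Claim~\ref{ourclaim2}, which restricts attention to partitions satisfying the rigidity property (P*), the final comparison does not go through. So the two missing ingredients are (i) replacing the false deterministic absorption with a count of the graphs on which the ``routing'' configurations are atypically sparse, and (ii) a uniqueness-of-partition argument to prevent overcounting good graphs.
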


We note that this theorem was proven independendly  using different techniques by Kim et al. in \cite{KKOT}\footnote{The results in this paper 
were proven  and a draft written concurrently  but the paper lay dormant for many years}

The   cycle of length $8$  can be partitioned into two stable sets, into four cliques, and  into a stable set and three cliques.  
However, it cannot be partitioned  into three  cliques. So, $\wpn(C_8)$ is $3$. Furthermore, $C_8$ cannot be partitioned into two cliques,
and a  graph which  either has at most one edge, contains $C_3$, or  is  disconnected in the complement.
Thus  the following theorem proven in Section \ref{C2l} shows that our characterization  holds when $H$ is $C_8$.

\begin{theorem}
\label{C8.thm}
Almost every $C_8$-free graph can be partitioned into $2$ cliques and  a graph whose complement   is the disjoint union 
of graphs each of which  is  the join\footnote{The join of $G$ and $H$ has vertex set $V(H) \cup V(G)$ and edge set 
$E(H) \cup E(G) \cup \{xy| x \in V(H), y \in V(G)\}$. }  of a clique and a stable set. 
\end{theorem}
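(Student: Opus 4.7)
The plan is to follow the Prömel-Steger framework, as refined by Balogh and Butterfield and used in the proof of Theorem \ref{C2l.thm}, with modifications specific to $C_8$. First, I would verify the matching lower bound by showing that graphs obtained by partitioning $V_n = A \cup B \cup C$ into near-balanced parts, making $A$ and $B$ cliques, choosing $G[C]$ with complement equal to a disjoint union of complete split graphs, and placing cross edges arbitrarily, are indeed $C_8$-free. The key step is to check that $G[C]$ is $\{2K_2, P_4, P_3 \cup K_1\}$-free: within one ``super-part'' (corresponding to a single component of the complement) the induced $4$-vertex subgraphs are of the form $\bar K_4$, $K_4$, $K_1 \cup K_3$, or $K_2 \cup 2K_1$, while across super-parts all cross-edges are present, which precludes any induced $P_4$, $2K_2$, or $P_3 \cup K_1$. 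Since each $4$-vertex induced subgraph of $C_8$ lies in $\{P_4, 2K_2, P_3 \cup K_1, K_2 \cup 2K_1, 4K_1\}$, only the last two are realizable in $G[C]$, and a short case check shows neither can be extended through two cliques to a full induced $C_8$. Counting such constructions gives a lower bound of $2^{(1+o(1))(2/3)\binom{n}{2}}$ matching Prömel-Steger's upper bound on $|\Forb_{C_8}^n|$.

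Second, I would apply the container/entropy machinery from Prömel-Steger, together with the Overview's sharpening on partition sizes, to show that almost every $G \in \Forb_{C_8}^n$ admits a near-balanced partition $V_n = A \cup B \cup C$ certified by a canonical really-canonical $3$-sequence. The Claim of the Overview, applied to $C_8$ with $\wpn(C_8)=3$ and combined with the partition-obstructions for $C_8$ listed immediately before the theorem, pins this sequence down to the target: two almost-clique families together with one family of $\{2K_2, P_4, P_3 \cup K_1\}$-free graphs, i.e., complements of disjoint unions of complete split graphs. Third, a refinement argument promotes this rough partition to an exact one: for each vertex $v$ that violates the target structure in its assigned part (a non-neighbor inside $A$ or $B$, or a forbidden $4$-vertex pattern in $C$), many potential induced $C_8$'s involving $v$ must be simultaneously blocked by $C_8$-freeness, which forces either a clean reassignment of $v$ to another part or its inclusion in an exceptional set of size $o(n)$. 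A final entropy-deficit count, of the kind sketched in the Overview for part sizes, then shows that $C_8$-free graphs needing more than $o(n)$ exceptional vertices contribute only an $o(1)$ fraction of $|\Forb_{C_8}^n|$.

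The main obstacle is ensuring that the refinement is almost always clean. Unlike Theorem \ref{C2l.thm}, where each complement-component of the third part is a small graph (a star or a triangle), here the complement-components can be arbitrary complete split graphs, so disentangling which vertex belongs to the clique side versus the stable side of its component demands delicate use of $C_8$-freeness across all three parts. The entropy-deficit computation must then be calibrated so that a graph with $k$ deviations from the target structure has weight at most $2^{-\Omega(kn)}$ relative to the structured count, matching the $o(1)$ fraction required. I expect this final calibration, together with the accompanying case analysis verifying that each forbidden $4$-vertex pattern in $C$ propagates to a genuine $C_8$ through $A$ and $B$, to be the most delicate part of the proof.
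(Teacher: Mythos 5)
Your proposal sketches the right general shape (lower bound, ABBM-style near-witnessing partition, refinement, entropy count), but there is a genuine gap at the refinement/calibration step that the paper spends roughly four sections closing.

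First, your plan delivers an exceptional set of size $o(n)$, but the theorem demands an \emph{exact} partition. Showing that graphs needing \emph{more} than $o(n)$ exceptions are rare is essentially the paper's Theorem~\ref{Hstruc.thm} (read out of Alon--Balogh--Bollob\'as--Morris); it leaves the entire difficult range $1 \le |{\rm Bad}| \le o(n)$ untouched. To dispose of it, the paper first drives the exceptional set down to size $O((\log n)^6)$ via the ``strong core'' of $X_1$ and several rounds of atypical-set deletion (Claims~\ref{strongcore},~\ref{greatclaim}); this relies on the specific structure of the target families (cliques plus complements of disjoint unions of split graphs), which you do not exploit.

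Second, the proposed calibration ``a graph with $k$ deviations has weight $2^{-\Omega(kn)}$'' is not strong enough on its own. A deviant vertex has $2^{\Theta(n)}$ free choices of edges to its own part, so an unquantified $\Omega(kn)$ deficit cannot beat them. The paper makes this work by showing that deviant vertices are necessarily \emph{extreme} on their assigned part (almost universal or almost isolated), so their within-part edges cost only $2^{O(\mu\log(1/\mu)n)}$, and by taking $\mu$ small relative to the cross-edge deficit constant. You never impose this extremeness, so your entropy count does not close.

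Third, you compare bad graphs against a $2^{(1+o(1))(2/3)\binom n2}$ lower bound, but the relevant count of good graphs carries a second-order factor $B_{\lceil n/3\rceil} = 2^{\Theta(n\log n)}$ (Lemma~\ref{finsize}, Observation~\ref{obs:bruce4}); the argument must be carried out at that finer scale. Finally, you sum over partitions without controlling overcounting of good graphs; the paper's Claim~\ref{ourclaim2} is what ensures that each good graph has only $O(1)$ good partitions, which is needed for the bad-versus-good comparison to be meaningful.
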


The   cycle of length $10$  can be partitioned into two stable sets,into five cliques,  and  into a stable set and  four cliques,  However, it cannot be partitioned  into four   cliques. So $\wpn(C_{10})$ is $4$. 
Furthermore, $C_{10}$ cannot be partitioned into three cliques
and a  graph which  either is disconnected in the complement,  contains a $C_3$, has at most three vertices, or is stable.  
Thus  the following theorem proven in Section \ref{C2l} shows that our characterization  holds when $H$ is $C_{10}$. 

\begin{theorem}
\label{C10.thm}
Almost every $C_{10}$-free graph can be partitioned into $3$ cliques and a graph  which is the complement of the disjoint union of stars and cliques.
\end{theorem}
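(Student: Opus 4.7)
The plan is to adapt the framework used for Theorems~\ref{C6.thm} and \ref{C2l.thm}. A short forbidden-subgraph analysis shows that the fourth family $\mathcal{F}_4$ in the canonical witnessing $4$-sequence---graphs whose complement is a disjoint union of stars and cliques---coincides with the class of $\{P_4,\,2K_2,\,P_3\cup K_1,\,K_2\cup 2K_1\}$-free graphs; these four graphs are the complements of the four minimal forbidden induced subgraphs ($P_4,\,C_4,\,$ paw, $K_4-e$) for the class of disjoint unions of stars and cliques. My target is to certify almost every $C_{10}$-free graph by the witnessing sequence consisting of three copies of ``all cliques'' together with $\mathcal{F}_4$.

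The starting point is the balanced Pr\"omel--Steger partition reviewed in the overview. Since $\wpn(C_{10})=4$, for every $\epsilon>0$ almost every $C_{10}$-free graph $G$ on $V_n$ admits a partition $(S_1,S_2,S_3,S_4)$ with $\bigl||S_i|-n/4\bigr|\le\epsilon n$ and each $G[S_i]$ lying in a hereditary family of size $2^{o(n^2)}$. I would then perform two structural cleanups. First, for each $i\le 3$ I would show that all but $o(n)$ vertices $v\in S_i$ are adjacent to every other vertex of $S_i$. The engine is $C_{10}$-completion: a non-adjacent pair $v,w\in S_i$ can be embedded as two non-consecutive vertices of an induced $C_{10}$, with the remaining eight vertices supplied as three consecutive pairs drawn one from each of the other parts, plus their immediate neighbours on the cycle; in a balanced partition with typical inter-part densities this yields $\Omega(n^8)$ candidate extensions per pair $(v,w)$. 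The familiar counting/entropy comparison with $|\Forb_{C_{10}}^n|\ge 2^{(3/4)\binom{n}{2}}$ then shows that outside an $o(n)$-sized exceptional set $X$, the induced subgraph $G[S_i\setminus X]$ is actually a clique for every $i\le 3$.

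Second, assuming the first three parts induce cliques, I would show that $G[S_4]$ is $\mathcal{F}_4$-free. For each of the four forbidden patterns above, I would exhibit an explicit $C_{10}$-completion using two consecutive vertices from each of $S_1,S_2,S_3$; a direct check shows that every such pattern arises as the induced subgraph of $C_{10}$ on the four vertices left after deleting three pairwise-disjoint edges, the specific deletion determining which of the four patterns appears ($\{1,2\},\{3,4\},\{5,6\}$ produces $P_4$; $\{1,2\},\{5,6\},\{9,10\}$ produces $2K_2$; $\{1,2\},\{3,4\},\{6,7\}$ produces $P_3\cup K_1$; $\{1,2\},\{4,5\},\{7,8\}$ produces $K_2\cup 2K_1$). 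For a typical $G$ the cross-part adjacencies required to close up the cycle are supplied in $\Omega(n^6)$ ways per pattern per four-tuple of $S_4$-vertices, and the same counting argument rules out each pattern outside an $o(n)$ exceptional set.

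The main obstacle is coupling the two cleanups: exceptional vertices removed from $S_1,S_2,S_3$ must be reassigned (typically to $S_4$) without creating any of the four forbidden patterns in $G[S_4]$, and conversely vertices later moved out of $S_4$ must not destroy the clique structure of the first three parts. Handling this coherently requires arguing that a typical $C_{10}$-free graph has an essentially unique witnessing partition of the desired form, so that the cleanups can be performed compatibly. As in Sections~\ref{C6} and \ref{C2l}, I expect this uniqueness to be the delicate technical core of the proof and to follow from a second-moment-type entropy argument comparing the contributions of competing partitions against $|\Forb_{C_{10}}^n|$.
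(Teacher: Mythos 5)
Your forbidden-subgraph analysis is correct: the complement of a disjoint union of stars and cliques is exactly a join of stable sets and graphs of the form $K_r\cup K_1$, and these are characterized by excluding $\{P_4,\,2K_2,\,P_3\cup K_1,\,K_2\cup 2K_1\}$. The high-level plan --- a near-balanced witnessing partition plus $C_{10}$-completion arguments to drive the parts into the desired families --- is also the right starting point, and matches what the paper does with Theorem~\ref{Hstruc.thm}. But the proposal has a genuine gap at the precise place you flag as ``the delicate technical core,'' and it is not the kind of gap that a generic second-moment argument fills.

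The problem is that an $o(n)$ exceptional set is not small enough. The advantage that graphs with an honest witnessing partition hold over the crude lower bound $2^{(3/4)\binom n2}$ is only a Bell-number factor $B_{\lceil n/4\rceil}=2^{\Theta(n\log n)}$, coming from the number of ways the complement of $G[X_4]$ can split into stars and cliques. An exceptional set $B$ of size $s=o(n)$ can still be as large as, say, $n/\log\log n$, and the number of choices for $B$ together with its incident edges then dwarfs this $2^{\Theta(n\log n)}$ cushion; your ``counting/entropy comparison with $|\Forb_{C_{10}}^n|\ge 2^{(3/4)\binom n2}$'' does not close. The paper instead pushes the exceptional set all the way down to $O((\log n)^6)$ before the final comparison, and this requires the full multi-stage machinery you are implicitly waving at: the ABBM-based partition (Theorem~\ref{Hstruc.thm}), the \emph{atypical set} $\mathcal A$ controlled through auxiliary bipartite switching graphs (Claims~\ref{claimY}--\ref{c22new}), the \emph{strong core} of $X'_{i^*}$ (Claims~\ref{strongcore}, \ref{greatclaim}), and then iterated refinements $A'$, $A''$, $A'''$ in Sections~6 and~9. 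Each refinement exploits not merely ``$\Omega(n^k)$ candidate $C_{10}$-completions'' but the specific local structure of the already-cleaned parts (e.g.\ that the core of $X'_{i^*}$ is covered by anticomponents of size $O((\log n)^3)$), which is what makes the per-vertex entropy loss scale like $n/(\log n)^3$ rather than merely $\Omega(1)$. Without that quantitative control, the ``coherent reassignment'' of exceptional vertices cannot be charged against the good graphs. Finally, the uniqueness issue you raise is handled not by a second-moment argument but by Claim~\ref{ourclaim2} in Section~\ref{counting}, which shows via the codegree property (P*) that a typical certifying graph admits $O(1)$ certifying partitions; this is the piece that lets the paper compare (graph, partition) pairs rather than graphs directly.
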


Thus, our characterization  holds whenever $H$ is a cycle. Reed and Yuditsky have shown that it also holds whenever $H$ is a tree, 
Since the complement of a hereditary family is a hereditary family and the $\overline{H}$-free graphs are precisely the complements 
of the $H$-free graphs, it also holds when $H$ is the complement of a cycle or the complement of a tree. 

In the remaining sections of the paper, we prove the four theorems we have just stated.
We end this section with some definitions.

We use $P_k$ for the $k$-vertex path and $E_k={\overline K}_k$ for the $k$-vertex graph with no edges.  It will be convenient to use $\log$ for $\log_2$ throughout the paper.

We say that a hereditary family of graphs is {\em restricted} if 
it does not contain all bipartite graphs, does not contain all complements of bipartite graphs and does not contain 
all split graphs.

\section{The Relevant Witnessing Sequences, Their Structure and Growth Rate}

We turn now to determining the relevant   witnessing sequences for cycles of length $2l$ for $l>2$, 
and prove some results about the growth rate of the families in these sequences and the structure of typical graphs within them.

Throughout we use the fact proven by Seinsche\cite{Se}  (and rediscovered many times, see   \cite{BLS}) that for every $P_4$-free $G$, one of $G$ or $\overline{G}$ is disconnected.
We also use the observation (in both graphs and their complements) that a graph contains no induced $P_3$ if and only if it 
is the disjoint union of cliques. 

\subsection{The Really Canonical  Witnessing Sequences}

\begin{claim}
Every really canonical   $C_6$-freeness witnessing sequence ${\cal F}_1,{\cal F}_2$.
satisfies one of the following for some $i$:
 \begin{enumerate}

\item Every graph  in ${\cal F}_i$ is the complement of a graph of girth  5 and every graph  in ${\cal F}_{3-i}$ is  a stable set, 

\item Every graph in  ${\cal F}_{i}$ is $P_4$-free and every graph in ${\cal F}_{3-i}$  of size at least three is a clique, 

\item Every graph in ${\cal F}_i$ is stable or complete multipartite, and every graph in ${\cal F}_{3-i}$ is a clique or  the complement of a star, 

\item Every graph   in  ${\cal F}_i$ is the complement of a matching, and every graph in  ${\cal F}_{3-i}$ is the disjoint union of a clique and a stable set.

 \end{enumerate}
\end{claim}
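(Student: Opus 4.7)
The plan is to perform a case analysis on the membership of the induced subgraphs of $C_6$ in $J_1$ and $J_2$. First I would enumerate the induced subgraphs of $C_6$ (up to isomorphism: $K_0, K_1, K_2, E_2, P_3, P_2+P_1, E_3, P_4, P_3+P_1, 2K_2, P_5, C_6$) and the eight unordered bipartitions $\{C_6[X_1], C_6[X_2]\}$ of $V(C_6)$. For each ordered bipartition the witnessing condition requires $C_6[X_1] \in J_1$ or $C_6[X_2] \in J_2$; the trivial and singleton bipartitions force $C_6, P_5 \in J_1 \cap J_2$, and the remaining bipartitions yield the pairs $\{K_2, P_4\}, \{E_2, P_3+P_1\}, \{E_2, 2K_2\}, \{P_3, P_3\}, \{P_2+P_1, P_2+P_1\}, \{E_3, E_3\}$.

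The primary case split is on whether $K_2$ or $E_2$ lies in the $J_i$'s. The case $K_2 \in J_1 \cap J_2$ is impossible since it would make both $\mathcal F_i$ the family of stable sets, contradicting the bipartition of $C_6$ into two copies of $E_3$. If $K_2 \in J_1$ (the case $K_2 \in J_2$ is symmetric), then $\mathcal F_1$ is exactly the family of stable sets and the partitions $\{E_2, 2K_2\}$ and $\{E_3, E_3\}$ force $\{E_3, 2K_2\} \subseteq J_2$; since every other induced subgraph of $C_6$ that must lie in $J_2$ ($P_3+P_1, P_5, C_6$) contains $E_3$, we conclude that $\mathcal F_2 = \{E_3, 2K_2\}$-free, which equals the class of complements of graphs of girth at least $5$. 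This yields condition~1.

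Otherwise $K_2 \notin J_1 \cup J_2$, so $\{K_2, P_4\}$ forces $P_4 \in J_1 \cap J_2$ and both $\mathcal F_i$ are cographs by Seinsche's theorem. If additionally $E_2 \in J_i$ for some $i$, then $\mathcal F_i$ consists only of cliques while $\mathcal F_{3-i}$ is $P_4$-free, matching condition~2. In the final subcase $E_2, K_2 \notin J_1 \cup J_2$, the partitions involving $E_2$ force $\{P_3+P_1, 2K_2\} \subseteq J_1 \cap J_2$, and the three diagonal partitions require each of $P_3, P_2+P_1, E_3$ to lie in at least one $J_i$. A sub-subcase analysis on the pattern of these three memberships, using the short characterisations $(P_2+P_1)$-free $=$ complete multipartite, $(P_3)$-free $=$ disjoint union of cliques, and $(E_3, 2K_2)$-free $=$ complement of a graph of girth at least $5$, identifies each resulting pair $(\mathcal F_1, \mathcal F_2)$ with one of conditions~3 or~4.

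The main obstacle is the bookkeeping in this final subcase, which splits into several sub-subcases depending on how $P_3, P_2+P_1, E_3$ are distributed between $J_1$ and $J_2$. One must verify case-by-case that the resulting pair of families matches either condition~3 (one side complete multipartite, the other clique or complement of a star) or condition~4 (one side complement of a matching, the other disjoint union of a clique and a stable set). The key recurring tool here is Seinsche's cograph decomposition applied recursively within each $\mathcal F_i$, which reduces each verification to examining the join/disjoint-union structure dictated by the forbiddances present in that sub-subcase.
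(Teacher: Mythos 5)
Your proof takes essentially the same route as the paper's: a case analysis on which small induced subgraphs of $C_6$ lie in $J_1$ and $J_2$, beginning with whether $K_2$ is forbidden (yielding condition~1), then whether $E_2$ is forbidden (yielding condition~2), and finally using the three diagonal bipartitions of $C_6$ into $P_3+P_3$, $\overline{P_3}+\overline{P_3}$, $E_3+E_3$ to drive the last subcase. Your derivations for the $K_2$ and $E_2$ cases correctly mirror the paper's.

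However, there is a genuine gap in the final sub-subcase analysis, and you share it with the paper. You assert that the pattern of memberships of $P_3$, $\overline{P_3}$, $E_3$ in the $J_i$ always leads to condition~3 or~4; the paper makes the analogous unsupported assertion that ``each of $\mathcal F_1$ and $\mathcal F_2$ contain at least one of $P_3$ or $\overline{P_3}$.'' Neither holds when both $P_3$ and $\overline{P_3}$ lie in one $J_i$ while $E_3$ lies only in the other. Take $J_1$ to be the upward closure of $\{P_3,\overline{P_3}\}$ among induced subgraphs of $C_6$ and $J_2$ the upward closure of $\{E_3,P_4,2K_2\}$. Checking every ordered bipartition of $V(C_6)$ verifies that this is a really canonical $C_6$-freeness witnessing sequence, and it lands in your final subcase: $K_2,E_2\notin J_1\cup J_2$, $P_4,2K_2,P_3+P_1\in J_1\cap J_2$, with $P_3,\overline{P_3}\in J_1$ and $E_3\in J_2$. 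But $\mathcal F_1$ is exactly the class of cliques and stable sets, which contains $E_3$ and hence is not contained in ``stable sets'' (condition~1), ``cliques plus $E_2$'' (condition~2), ``cliques or complements of stars'' (condition~3), or ``complements of matchings'' (condition~4); while $\mathcal F_2$, the $\{E_3,P_4,2K_2\}$-free graphs, contains both $P_3$ and $\overline{P_3}$ and hence is not contained in ``stable sets'', ``cliques plus $E_2$'', ``complete multipartite'', ``complements of matchings'', or ``disjoint unions of a clique and a stable set''. So no choice of $i$ makes any of conditions~1--4 hold, and your sub-subcase analysis does not close as written. You need either to exhibit a reduction of this sub-subcase to one of the four listed conditions (which I do not see) or to recognize that the statement requires an additional clause.
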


\begin{proof}
If for some $i \in \{1,2\}$, ${\cal F}_i$ is the family of all stable sets  then ${\cal F}_{3-i}$ 
contains neither $2K_2$ nor $S_3$ and hence every graph in this family is the complement of a graph of girth 5.

Otherwise  both ${\cal F}_1$ and  ${\cal F}_2$  contain the clique of size 2 and hence both are  subfamilies of the family of $P_4$-free graphs.
If either ${\cal F}_1$ or ${\cal F}_2$ consists only of  cliques and the stable set of size 2  we simpy record that there is an $i$ in $\{1,2\}$ such that 
 every graph in $F_i$ is $P_4$-free,  and every graph in ${\cal F}_{3-i}$ is a clique or the stable set of size 2. 

Otherwise  ${\cal F}_1$ contains a stable set  of size $2$  
so ${\cal F}_2$  also contains no $2K_2$, and similarly ${\cal F}_1$ contains no $2K_2$ . 
Furthermore, each of ${\cal F}_1$ and ${\cal F}_2$ contain at least  one of $P_3$ or $\overline{P_3}$.
Since a $C_6$ can be partitioned both into two $P_3s$ and into two $\overline{P_3}$s we see that for some 
$i$, we have that ${\cal F}_i$ contains $\overline{P_3}$ but not $P_3$ while 
${\cal F}_{3-i}$ contains $P_3$ but not $\overline{P_3}$. 
In this case every graph in ${\cal F}_{3-i}$ is the disjoint union of a clique and a stable set
while every graph in ${\cal F}_i$ is stable or a complete multipartite graph.
Since $C_6$ can be partitioned into two stable sets of size 3 we see that in fact either ${\cal F}_i$ contains only 
complements of  matchings or  ${\cal F}_{3-i}$ contains only cliques and complements of stars. 

\end{proof} 

\begin{claim}
Every really canonical   $C_8$-freeness witnessing sequence
${\cal F}_1,{\cal F}_2,{\cal F}_3$ satisfies one of the following: 

 \begin{enumerate}
  \item  one family contains only graphs $J$ such that 
  every component of $\overline J$ is the join of a clique and a stable set (and hence induces the disjoint union of a clique and a stable set in $J$);
  the other two families  contain only cliques and possibly the stable set of size 2
  \item  one family contains only cliques and stable sets, a second family   contains only cliques and possibly the stable set of size 2, and the third 
  contains only complements of matchings.
 \end{enumerate}

\end{claim}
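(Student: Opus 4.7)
The argument mirrors the preceding $C_6$ claim, adapted to the three-family setting. Two observations drive the analysis: every induced subgraph of $C_8$ is $C_8$ itself or a disjoint union of paths, and by the earlier claim each $J_i$ contains a bipartite graph, a co-bipartite graph, and a split graph. Call ${\cal F}_i$ \emph{narrow} if it contains only cliques and possibly $E_2$; by really canonicality a narrow family is the family of all cliques together with possibly $E_2$, while a \emph{wide} family contains at least one of $E_3, P_3, \overline{P_3}$ (and hence also $E_2$ by heredity).

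First I rule out that all three families are wide. The partitions of types $(E_3,E_3,K_2)$, $(P_3,P_3,E_2)$, and $(\overline{P_3},\overline{P_3},E_2)$, realized respectively by $\{v_1,v_3,v_5\},\{v_2,v_4,v_6\},\{v_7,v_8\}$; by $\{v_1,v_2,v_3\},\{v_5,v_6,v_7\},\{v_4,v_8\}$; and by $\{v_1,v_2,v_4\},\{v_5,v_6,v_8\},\{v_3,v_7\}$, force at most one family to contain each of $E_3, P_3, \overline{P_3}$. So if all three families are wide, the three types are distributed one-per-family; then the partition of type $(E_3,P_3,E_2)$ realized by $\{v_1,v_3,v_5\},\{v_6,v_7,v_8\},\{v_2,v_4\}$ admits a valid assignment, a contradiction. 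Hence at least one family is narrow. If at least two are narrow, say ${\cal F}_1,{\cal F}_2$, the partitions of types $(K_2,K_2,P_4)$, $(K_2,K_2,2K_2)$, and $(K_2,K_2,P_3+K_1)$, realized respectively by $\{v_1,v_2\},\{v_3,v_4\},\{v_5,\dots,v_8\}$; by $\{v_1,v_2\},\{v_5,v_6\},\{v_3,v_4,v_7,v_8\}$; and by $\{v_1,v_2\},\{v_6,v_7\},\{v_3,v_4,v_5,v_8\}$, force ${\cal F}_3$ to avoid $P_4, 2K_2, P_3+K_1$. These are the minimal obstructions (among induced subgraphs of $C_8$) to being a graph whose complement's components are each a join of a clique and a stable set; thus ${\cal F}_3$ lies in this family, giving Case 1 of the claim.

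If exactly one family is narrow and two are wide, the two wides together contain two of the three types $E_3, P_3, \overline{P_3}$. The pairings $\{E_3,\overline{P_3}\}$ and $\{P_3,\overline{P_3}\}$ are ruled out by the partitions of types $(E_3,K_2,\overline{P_3})$ and $(K_2,P_3,\overline{P_3})$ (realized by $\{v_1,v_4,v_7\},\{v_2,v_3\},\{v_5,v_6,v_8\}$ and by $\{v_7,v_8\},\{v_3,v_4,v_5\},\{v_1,v_2,v_6\}$), in each of which the narrow absorbs the $K_2$ and the two wides absorb the remaining parts. So the wide pair is $\{E_3, P_3\}$. The $E_3$-wide is $P_3$-free and $\overline{P_3}$-free and therefore contained in the union of all cliques and all stables; the $P_3$-wide is $E_3$-free and $\overline{P_3}$-free, so is complete multipartite with all parts of size at most two, equivalently a complement of a matching. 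This is Case 2 of the claim. The main obstacle is the careful verification that the partition-forced restrictions exactly reconstruct the advertised structures and that no configuration slips through.
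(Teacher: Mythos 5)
Your proposal is correct and, at its heart, uses the same partition-verification method as the paper: produce explicit partitions of $C_8$ to constrain which three-vertex graphs the families may contain, and then argue backwards from those constraints to the advertised structure. The case organization differs: the paper leads with the presence or absence of $\overline{P_3}$ in some family, obtaining its ``two narrow families'' case from ``a unique family contains $\overline{P_3}$'' and sweeping the remaining configurations into the no-$\overline{P_3}$ case, whereas you introduce the narrow/wide dichotomy up front, show at least one family must be narrow, and split on whether one or at least two are narrow. Your version is slightly more systematic and makes all the realizing partitions explicit, which is a useful check; the paper's version is terser. Both reach the same two structural conclusions.

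Two small points worth tightening. First, your partition types $(E_3,E_3,K_2)$, $(P_3,P_3,E_2)$, $(\overline{P_3},\overline{P_3},E_2)$ each fix whether the residual two-vertex part is $K_2$ or $E_2$, but the family that must absorb it is only guaranteed to contain one of the two (a really canonical family contains all cliques or all stable sets, not necessarily both $K_2$ and $E_2$). You should also exhibit the companion partitions $(E_3,E_3,E_2)$, $(P_3,P_3,K_2)$, and $(\overline{P_3},\overline{P_3},K_2)$, all of which do exist in $C_8$ (e.g.\ $\{v_1,v_3,v_6\},\{v_2,v_5,v_8\},\{v_4,v_7\}$ for $(E_3,E_3,E_2)$), so that the ``at most one family per type'' constraint holds regardless of which two-vertex graph the third family carries. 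Second, ``the two wides together contain two of the three types'' should read ``at least two,'' since a wide family can contain more than one of $E_3,P_3,\overline{P_3}$; your subsequent elimination of $\overline{P_3}$ from both wides is still correct and forces the pair to be exactly $\{E_3\},\{P_3\}$.

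Finally, a useful observation that your argument silently exploits in the one-narrow case: $C_8$ has \emph{no} partition of type $(E_3,K_2,P_3)$ (removing any edge leaves a $P_6$, which cannot be split into an $E_3$ and a $P_3$), whereas $(E_3,E_2,P_3)$ does exist. This is precisely why the pair $\{E_3,P_3\}$ survives only when the narrow family omits $E_2$; making this explicit would complete the verification that no configuration slips through.
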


\begin{proof}

If there is no $\mathcal F_i$ which contains a $\overline{P_3}$, then each family contains only stable sets, cliques,  and 
complete multipartite  graphs. Furthermore at most one family contains an $E_3$ and at most one family contains a $P_3$,
so  either two families contain only  cliques and possibly the stable set of size 2, or one family contains only cliques and possibly the stable set of size 2,  
while  a  second contains  only  complements of  matchings and a third contains only cliques and stable sets.  These options are both covered in the statement of the claim.

Otherwise,  there is a unique $i$ which contains a $\overline{P_3}$. This implies that the   other ${\cal F}_i$ cannot contain 
$E_3$ or $P_3$ so they  consists of all cliques and possibly the stable set of size 2.  Hence ${\cal F}_i$  contains no $P_4$,  and no  graph with 4 vertices and 2 edges.  
So, for every graph $J$ in $F_i$ each of the components of $\overline{J}$  induces a  disconnected subgraph of $J$, and these subgraphs have no nonclique components 
and at most one  component which is not a vertex. 

\end{proof}

\begin{claim} 
Every really canonical   $C_{10}$-freeness witnessing sequence
${\cal F}_1,{\cal F}_2,{\cal F}_3,{\cal F}_4$ satisfies:

one  family  consists of graphs which are joins of graphs which are either 
stable sets or the disjoint union of a clique and a vertex, and every other family  consists of all cliques and possibly $E_2$. 
\end{claim}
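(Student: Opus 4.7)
The plan is to identify a single family $\mathcal{F}_\ast$ carrying all the ``nontrivial'' graphs, and to show that the other three families contain only cliques and possibly $E_2$, while $\mathcal{F}_\ast$ is contained in the family of joins of stable sets and disjoint unions of a clique and a vertex. The argument is driven by applying the witnessing property against a handful of explicit $4$-partitions of $V(C_{10})$, labelling the cycle as $1,\dots,10$.

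The key partitions I would write down explicitly are: $(P_3,P_3,P_3,K_1)$ via $\{1,2,3\},\{4,5,6\},\{7,8,9\},\{10\}$; $(P_3,P_3,K_2,K_2)$ via $\{1,2,3\},\{6,7,8\},\{4,5\},\{9,10\}$, together with its variants replacing one or both $K_2$s by an $E_2$ (e.g.\ $\{1,2,3\},\{6,7,8\},\{4,9\},\{5,10\}$); the analogous $(\overline{P_3},\overline{P_3},K_2,K_2)$ via $\{1,2,5\},\{6,7,10\},\{3,4\},\{8,9\}$ and $(E_3,E_3,K_2,K_2)$ via $\{3,7,9\},\{4,8,10\},\{1,2\},\{5,6\}$, again with $E_2$-variants; $(P_3,\overline{P_3},E_3,K_1)$ via $\{1,2,3\},\{5,6,8\},\{4,7,9\},\{10\}$ and $(P_3,\overline{P_3},K_2,K_2)$ via $\{6,7,8\},\{3,9,10\},\{1,2\},\{4,5\}$; and, for each obstruction $F\in\{P_4,2K_2,P_3+K_1,K_2+2K_1\}$, a partition of type $(F,K_2,K_2,K_2)$, for instance $(\{1,2,3,4\},\{5,6\},\{7,8\},\{9,10\})$ for $F=P_4$, $(\{1,2,5,6\},\{3,4\},\{7,8\},\{9,10\})$ for $F=2K_2$, $(\{1,2,3,6\},\{4,5\},\{7,8\},\{9,10\})$ for $F=P_3+K_1$, and $(\{1,2,5,8\},\{3,4\},\{6,7\},\{9,10\})$ for $F=K_2+2K_1$.

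The first partition forces $P_3$ into at most two families, and the case analysis against $(P_3,P_3,K_2,K_2)$ and its $E_2$-variants (splitting on whether each remaining family is clique-heavy or stable-heavy) sharpens this to at most one. The analogous arguments show at most one family contains $\overline{P_3}$, and at most one contains $E_3$ (and hence any $E_k$ for $k\ge 3$, by heredity). The partitions $(P_3,\overline{P_3},E_3,K_1)$ and $(P_3,\overline{P_3},K_2,K_2)$ then force $P_3$, $\overline{P_3}$, and $E_3$ (whenever present) into a single common family $\mathcal{F}_\ast$. Since any graph on at least three vertices that is neither a clique nor a stable set contains an induced $P_3$ or $\overline{P_3}$, hereditarity forces each $\mathcal{F}_i$ with $i\neq\ast$ to consist only of cliques and possibly $E_2$; the really canonical condition then pins each such $\mathcal{F}_i$ to be clique-heavy, so in particular $K_2\in\mathcal{F}_i$.

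Finally, the partition $(F,K_2,K_2,K_2)$ for each of the four obstructions $F$ yields $F\notin\mathcal{F}_\ast$ at once. These four graphs $P_4$, $2K_2$, $P_3+K_1$, $K_2+2K_1$ are the complements of $P_4$, $C_4$, paw, and diamond respectively, and a standard cograph-style case analysis shows these are exactly the minimal connected $4$-vertex graphs that are neither stars nor cliques; hence the four obstructions are precisely the minimal induced subgraphs forbidden for a graph to be a join of stable sets and disjoint unions of a clique and a vertex. Therefore $\mathcal{F}_\ast$ is contained in the stated special family, completing the claim. The main obstacle is the case analysis of the third paragraph: for each hypothetical distribution of $P_3$, $\overline{P_3}$, $E_3$ among the four families, and each choice of which other families are clique-heavy versus stable-heavy, one must verify that some explicit partition from the list above produces a direct witnessing failure.
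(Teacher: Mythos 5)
Your proposal follows essentially the same route as the paper: use explicit partitions of $V(C_{10})$ to (a) confine $P_3$, $\overline{P_3}$, $E_3$ to a single family $\mathcal F_\ast$, (b) deduce that the other three families are all cliques plus possibly $E_2$, and (c) exclude $P_4$, $2K_2$, $P_3+K_1$, $K_2+2K_1$ from $\mathcal F_\ast$ and invoke the $P_4$-free (cograph) structure. The paper accomplishes (a) in one stroke by asserting that $C_{10}$ can be partitioned into $L$, $M$, $a$ edges and $b$ non-edges for \emph{every} $L,M\in\{P_3,\overline{P_3},E_3\}$ and every $a+b=2$; your version lists concrete witnesses for most of these cases but not all, and this is where the gap lies.

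Specifically, your list of unification partitions covers a repeated pattern in two families ($(P_3,P_3,\ast,\ast)$, $(\overline{P_3},\overline{P_3},\ast,\ast)$, $(E_3,E_3,\ast,\ast)$) and the mixed pair $(P_3,\overline{P_3},\ast,\ast)$, but it does not cover $(P_3,E_3,\ast,\ast)$ or $(\overline{P_3},E_3,\ast,\ast)$. So if, say, $P_3\in\mathcal F_1$ and $E_3\in\mathcal F_2$ with $\mathcal F_1\ne\mathcal F_2$ and $\overline{P_3}$ not present, neither $(P_3,\overline{P_3},E_3,K_1)$ nor $(P_3,\overline{P_3},K_2,K_2)$ produces a witnessing failure, and your third paragraph does not close. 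The missing partitions do exist: for instance $\{1,2,3\},\{4,7,10\},\{5,6\},\{8,9\}$ is a $(P_3,E_3,K_2,K_2)$-partition of $C_{10}$, and $\{1,2,6\},\{3,7,10\},\{4,5\},\{8,9\}$ is a $(\overline{P_3},E_3,K_2,K_2)$-partition. Once these are added your case analysis is complete (and, since the remaining two families do not contain $E_3$, the really-canonical condition forces them to contain all cliques, so only the $K_2$-variant is needed in this step). A second, lesser, point: your last paragraph argues that $\mathcal F_\ast$ avoiding the four 4-vertex obstructions puts it in the desired class because their complements are exactly the connected 4-vertex non-stars-non-cliques, but this alone does not rule out larger minimal obstructions; the real work is Seinsche's theorem (every $P_4$-free graph or its complement is disconnected), which is precisely what the paper appeals to.
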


\begin{proof}
$C_{10}$ can be partitioned into  an $L$, an $M$,  and   $a$ edges and $b$ non-edges for any $L,M \in\{P_3,\overline{P_3}, E_3\}$ and
$a$ and $b$ which sum to two. 
So there is at most one ${\cal F}_i$ such that $J_i$ does not contain $\{P_3,\overline{P_3}, E_3\}$, and every 
 other family consists of all cliques and possibly the stable sets of size 2.
  Since $C_{10}$ can be partitioned into a  $P_4$ and 3  edges,
we see that  ${\cal F}_i$ contains no  $P_4$. 
A similar argument shows that ${\cal F}_i$ contains no graph with 4 vertices and either one or two  edges. 
Since every $P_4$-free graph is either disconnected or disconnected in the complement,   it follows that   every graph in ${\cal F}_i$ is the complement of the disjoint union of stars and cliques. 
\end{proof}

.

\begin{claim}\label{c10}  For $l>5$, $C_{2l}$ can be partitioned into any of the following sets of graphs:

(a) An $L$, an $M$,  and $a$ edges and $b$ non-edges for any $L$ and $M$ which are both  in $\{P_3,\overline{P_3}, E_3\}$ and any $a$, $b$ summing to $l-3$.

(b) A $P_4$, $a$ edges and $b$ non-edges for any $a+b=l-2$. 

(c) An $H$, $a$ edges and $b$ non-edges for any $a+b=l-2$, and any graph $H$ with 4 vertices and at most two edges.  
\end{claim}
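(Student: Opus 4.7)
My plan is to prove the claim by explicit construction. Label the vertices of $C_{2l}$ cyclically as $v_1,\dots,v_{2l}$ with $v_i$ adjacent to $v_{i+1}$ (indices mod $2l$); note that $l-3\geq 3$ and $l-2\geq 4$. The workhorse is a \emph{path partition sub-routine}: any induced subpath of $C_{2l}$ on $2k$ consecutive vertices with $k\geq 2$ can be partitioned into $a$ edges and $b$ non-edges for any $a+b=k$. Indeed, on the first $2a$ vertices take $a$ consecutive edges; on the remaining $2b$ vertices, pair them in blocks of four via $(u_1,u_3),(u_2,u_4)$, giving two non-edges per block. A small parity adjustment (using a block of six partitioned as $(u_1,u_3),(u_2,u_5),(u_4,u_6)$) handles odd $b\geq 3$; the boundary case $b=1$ is handled by $(u_1,u_4),(u_2,u_3)$ on the first four vertices (one non-edge plus one edge). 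All pairs formed lie at distance $\geq 2$ in $C_{2l}$, so the designated non-edges really are non-edges.

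Parts (b) and (c) reduce at once to the sub-routine. For (b), place $P_4$ on $v_1v_2v_3v_4$; the remaining $2l-4$ vertices form an induced path of length $2(l-2)\geq 8$, which the sub-routine partitions into $a$ edges and $b$ non-edges. For (c), I embed $H$ on four specific vertices depending on its isomorphism type among the four possibilities $E_4,\ K_2+2E_1,\ 2K_2,\ P_3+E_1$: for instance, $E_4$ on $\{v_1,v_3,v_5,v_7\}$, $2K_2$ on $\{v_1,v_2,v_4,v_5\}$, $P_3+E_1$ on $\{v_1,v_2,v_3,v_5\}$, and $K_2+2E_1$ on $\{v_1,v_2,v_4,v_6\}$. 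In each case the leftover is a long induced path together with at most two stranded vertices sandwiched between used positions, which I pair off (as a non-edge, or as an edge if they happen to be adjacent to each other) before invoking the sub-routine on the remaining path.

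Part (a) is the most intricate, because each of $L$ and $M$ may be $P_3$, $\overline{P_3}$, or $E_3$, and the natural embeddings of $\overline{P_3}$ or $E_3$ use non-consecutive vertices and therefore create stranded positions. I place $L$ and $M$ within the first $12$ vertices using canonical embeddings ($P_3$ as three consecutive vertices; $\overline{P_3}$ as $\{v_i,v_{i+1},v_{i+3}\}$; $E_3$ as $\{v_i,v_{i+2},v_{i+4}\}$), separated by a small gap, then pair up the stranded vertices and apply the sub-routine to the long leftover path. I expect the main obstacle to be the extreme boundary cases $a=0$ and $b=0$, in which the forced pairings of stranded vertices are of the ``wrong'' type. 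The fix is to shift the placements of $L$ and $M$ so that the stranded vertices land in positions of the desired type: for example, placing $\overline{P_3}$ on $\{v_1,v_2,v_5\}$ rather than $\{v_1,v_2,v_4\}$ strands the two \emph{consecutive} vertices $v_3,v_4$, which form an edge and are thus usable when $b=0$; a symmetric choice works when $a=0$. Since $l>5$ guarantees at least three edge/non-edge pairs beyond $L$ and $M$, there is always enough room in the cycle to perform such a shift and then finish with the sub-routine.
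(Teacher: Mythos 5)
The paper states Claim~\ref{c10} without proof (it is followed only by ``We use Claim~\ref{c10} to show the following''), so there is no argument to compare against; the paper treats the partitions as self-evident. Your constructive plan --- embed $L$, $M$ (or $P_4$, or $H$) on a handful of vertices near one another on the cycle, pair off stranded vertices, and finish with the path sub-routine --- is the right elementary approach, and the sub-routine itself is correct as stated (the $b=1$ gadget $(u_1,u_4),(u_2,u_3)$ and the odd-$b\ge3$ gadget on six vertices both check out).

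There is, however, a concrete slip in part (c). Your stated embedding of $E_4$ on $\{v_1,v_3,v_5,v_7\}$ strands \emph{three} vertices $v_2,v_4,v_6$, not ``at most two,'' and these three are pairwise non-adjacent: any pairing of them among themselves or with the endpoint $v_8$ of the remaining path produces at least two non-edges. Hence this embedding cannot realize $b\in\{0,1\}$, and your text does not flag $E_4$ as one of the boundary cases needing a shift. The fix is the one you describe generically: take $E_4=\{v_1,v_4,v_7,v_{10}\}$, which is valid since $l>5$ makes $v_1,v_{10}$ non-adjacent, and which strands the four consecutive pairs $v_2v_3$, $v_5v_6$, $v_8v_9$ and the even path $v_{11},\dots,v_{2l}$, all of which decompose into edges, handling $b=0$; one-off modifications then handle $b=1$. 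Similarly, in part (a) the ``shift'' for $b=0$ needs to be checked for all nine ordered pairs $(L,M)$; for instance $L=M=E_3$ with $b=0$ requires an embedding such as $\{v_1,v_4,v_8\}$ and $\{v_5,v_9,v_{12}\}$ so that the leftover falls into arcs of even length, and the naive $\{v_1,v_3,v_5\},\{v_7,v_9,v_{11}\}$ fails outright. These are all fillable along the lines you indicate, but as written the verification that ``there is always enough room to perform such a shift'' is an unsubstantiated assertion covering roughly a dozen boundary configurations, and at least one of your explicitly-given embeddings is wrong for the very boundary case you identify as the main obstacle.
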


We use Claim \ref{c10} to show the following.

\begin{claim}\label{c11}
For every  $l>5$, Every really canonical   $C_{2l}$-freeness witnessing sequence
${\cal F}_1,{\cal F}_2,...,{\cal F}_{l-1}$ satisfies:
There is an index $i$ such that  the family  ${\cal F}_i$ .  
every graph in the family  ${\cal F}_i$ is the complement of the disjoint union of stars and triangles
while every other family consists of all cliques or all cliques and the two vertex graph $E_2$
\end{claim}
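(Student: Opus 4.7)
My plan is to apply the three parts of Claim \ref{c10} in turn, using the same pigeonhole argument each time to restrict which families can accommodate which small graphs. Call a family \emph{trivial} if every member is a clique or $E_2$. The key observation is that if a witnessing partition $V(C_{2l}) = X_1 \cup \cdots \cup X_{l-1}$ consists entirely of two-vertex parts, then it can always be realized by some assignment of parts to families: each really canonical family contains $K_2$ or $E_2$ (or both), and classifying the families by which of these they contain shows that the admissible numbers of edge-parts in a valid assignment form a nonempty interval. Thus, to derive a contradiction from any partition in Claim \ref{c10}, I only need to assign the non-trivial parts to suitable families; the trivial two-vertex remainder takes care of itself.

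From Claim \ref{c10}(a) it follows that at most one family ${\cal F}_{i^*}$ can contain any of $\{P_3, \overline{P_3}, E_3\}$: otherwise I would assign the two three-vertex parts to two distinct families that accept them and contradict witnessing. For $i \ne i^*$, being really canonical then forces ${\cal F}_i$ to contain all cliques (since containing all stable sets would require $E_3$), and because every non-clique on at least three vertices induces one of $P_3, \overline{P_3}, E_3$, this means ${\cal F}_i$ is trivial. Applying the same argument to Claim \ref{c10}(b) gives $P_4 \notin {\cal F}_{i^*}$, and to (c) gives $E_4, K_2 \cup 2K_1, 2K_2, P_3 \cup K_1 \notin {\cal F}_{i^*}$, since in each case only ${\cal F}_{i^*}$ can possibly accept the non-trivial part.

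It remains to show that every graph avoiding $\{P_4, 2K_2, P_3 \cup K_1, K_2 \cup 2K_1, E_4\}$ as an induced subgraph is the complement of a disjoint union of stars and triangles. Taking complements, the five graphs become $\{P_4, C_4, \text{paw}, K_4 - e, K_4\}$, so I must show that a graph $H$ avoiding these is a disjoint union of stars and triangles. I argue by examining a connected component $C$ of $H$. If $|V(C)| \le 3$ there is nothing to prove. If $|V(C)| \ge 4$, forbidding $P_4$ and $C_4$ excludes all induced cycles of length at least four, so $C$ is either a tree or contains a triangle. A $P_4$-free tree on at least four vertices has diameter at most two and hence is a star, while if $C$ contains a triangle together with a fourth vertex adjacent to it, the induced subgraph on those four vertices is a paw, $K_4 - e$, or $K_4$ depending on the number of edges between the fourth vertex and the triangle, all forbidden. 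The main (minor) obstacle is identifying the correct list of forbidden subgraphs of ${\cal F}_{i^*}$ and verifying this finite-case characterization; the counting steps are routine and parallel those in the proofs of the three preceding claims.
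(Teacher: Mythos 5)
Your proof is correct and takes essentially the same approach as the paper: apply Claim~\ref{c10}(a) to force all but one family to be trivial, then use (b) and (c) to exclude $P_4$ and the four-vertex graphs with at most two edges from $\mathcal F_{i^*}$, and finally characterize the resulting class. The only difference is cosmetic: in the final characterization step you pass to the complement and classify the components directly (shortest cycle must be a triangle, $P_4$-free tree on $\ge4$ vertices is a star, a triangle with any pendant vertex gives a forbidden paw/$K_4-e$/$K_4$), whereas the paper invokes Seinsche's theorem that every $P_4$-free graph is disconnected or co-disconnected and states the conclusion more tersely. Your preliminary observation that the two-vertex parts can always be placed (using a small interval argument on the number of edge-parts) is also sound and fills in a pigeonhole detail the paper leaves implicit.
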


Note that here we allow trivial (1-vertex) stars, so $\mathcal F_{i^*}$ may contain cliques.

\begin{proof}
It follows from Claim \ref{c10}(a) that there is at most one index $i$ such that $\mathcal F_i$ contains any graph
from $\{P_3,\overline{P_3}, E_3\}$.  So we can choose $i$ such that, for all $j\ne i$, the family  ${\cal F}_j$ consists of all cliques or all cliques and the two vertex graph with no edge.   

It follows from Claim \ref{c10}(b) that  ${\cal F}_i$ contains no  $P_4$, and from Claim \ref{c10}(c) that ${\cal F}_i$ contains no graph with 4 vertices and at most two edges.  
Since any $P_4$ free graph is either disconnected or disconnected in the complement, it follows that for any $J$ in ${\cal F_i}$ the components of $\overline{J}$
are either the union of a clique and a vertex, or a stable set of size  three. 
 
\end{proof}

\subsection{Growth Rate and Structure}

In order to prove our results we will need to bound the number of graphs certified as $H$-free by  a partition using one of the relevant witnessing sequences 
and compare this to the number which have no such witnessing partition.  Doing so  involves 
 some careful counting which will require  us to know how many graphs with a given number of 
 vertices there are in the various ${\cal F}_i$.  It will also help to know something about the typical behaviour of 
 the graphs which have witnessing partitions We now state some results which will be useful in this regard. 

Every $P_4$-free graph  is either disconnected or disconnected in the complement
 This, together with the fact that a graph cannot be both disconnected and disconnected in the complement 
 implies that there is a bijection between disconnected  $P_4$-free graphs  on $V_n$  
 and rooted trees which have $n$ leaves labelled with the elements of $V_n$ and no vertices 
 with exactly one child. If $G$ is a single vertex then the tree is a single node. Otherwise, 
 the  children of the root for the  tree for $G$ are in $1-1$ correspondence with the components of $G$.
 The subtree consisting of the descendants of a child of the root corresponding to a component $U$ of $G$
 form the tree corresponding to the complement of $U$.  Of course, proceeding analogously, we get a bijection between 
 such trees and the connected $P_4$-free graphs on $V_n$. Since each such tree has at most $2n$ vertices,
and (by Cayley's formula)  there are $l^{l-2}$ trees on $l$ vertices, we obtain that the number of $P_4$-free graphs on $n$ vertices is less than  $(2n)^{2n}$.
 
 We need the following results  on graphs of girth 5 from Morris and Saxton \cite{MS} (the second is a weak version of their Corollary 1.3).
  
 \begin{theorem} 
 \label{MStheorem}
 There are $2^{\Theta(n^{3/2})}$  graphs  of girth 5 with vertex set $V_n$.
 \end{theorem}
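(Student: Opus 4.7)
The plan is to establish matching bounds of $2^{\Omega(n^{3/2})}$ and $2^{O(n^{3/2})}$ on the number of graphs with vertex set $V_n$ of girth at least $5$ (equivalently, $\{C_3,C_4\}$-free graphs).

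For the lower bound, I would invoke a classical extremal construction. For a prime power $q$ the incidence (Levi) graph of the projective plane $PG(2,q)$ has $2(q^2+q+1)$ vertices, girth $6$, and $\Theta(q^3)$ edges. Taking $n = 2(q^2+q+1) = \Theta(q^2)$ therefore gives a $\{C_3,C_4\}$-free graph with $\Theta(n^{3/2})$ edges. Being $\{C_3,C_4\}$-free is monotone under taking subgraphs, so every one of the $2^{\Theta(n^{3/2})}$ subgraphs also has girth at least $5$. For arbitrary $n$, pick a prime $q = \Theta(\sqrt{n})$ via Bertrand's postulate and pad with isolated vertices.

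For the upper bound I would apply the hypergraph container method of Saxton--Thomason and Balogh--Morris--Samotij. Let $\mathcal{H}$ be the hypergraph on vertex set $E(K_n)$ whose hyperedges are the edge sets of triangles and $4$-cycles in $K_n$; a graph $G\subseteq K_n$ has girth at least $5$ iff $E(G)$ is independent in $\mathcal{H}$. The engine driving the method is a \emph{balanced supersaturation} estimate: for every $F\subseteq E(K_n)$ with $|F|\ge K n^{3/2}$, the hyperedges of $\mathcal{H}$ lying inside $F$ are numerous, and they are spread sufficiently evenly that no small subset of $F$ participates in too many of them. Feeding this into the container lemma yields a family $\mathcal{C}$ of subsets of $E(K_n)$ such that (i) every independent set of $\mathcal{H}$ is contained in some $C\in\mathcal{C}$, (ii) $|C|\le K n^{3/2}$ for every $C\in\mathcal{C}$, and (iii) $\log_2|\mathcal{C}| = o(n^{3/2})$. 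Multiplying gives at most $|\mathcal{C}|\cdot 2^{K n^{3/2}} = 2^{O(n^{3/2})}$ graphs of girth at least $5$.

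The main obstacle is proving the balanced supersaturation estimate. One needs precise, ``smooth'' counts of how many triangles and $4$-cycles a given dense $F\subseteq E(K_n)$ contains, and in particular how many pass through any specified pair or small set of edges. The K\H ov\'ari--S\'os--Tur\'an bound on $\mathrm{ex}(n;K_{2,2})$ enters through careful double counting, and one must iterate the container step all the way down to scale $n^{3/2}$. This quantitative counting, rather than the container lemma itself or the extremal construction, is the substantive technical work of Morris and Saxton.
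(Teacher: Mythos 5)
The paper does not prove this statement at all; it is quoted directly from Morris and Saxton \cite{MS}, and your outline (projective-plane incidence graph for the lower bound, hypergraph containers driven by a balanced supersaturation lemma for the upper bound) is exactly the strategy of that source, so your sketch is correct modulo the supersaturation lemma that you candidly defer to them. One small remark: for the girth-$5$ case specifically, the upper bound $2^{O(n^{3/2})}$ already follows from the older and more elementary Kleitman--Winston argument for $C_4$-free graphs, so the full strength of the Morris--Saxton machinery (needed for general $C_{2\ell}$ and for the sharper $2^{(1+o(1))\mathrm{ex}(n,C_{2\ell})}$ bounds) is not required for the crude $\Theta$-estimate stated here.
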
 
 
 \begin{theorem}
 \label{belasam}
 There is an $\epsilon>0$ such that the proportion of graphs of girth 
 5  on $V_n$ with fewer than $\epsilon n^\frac{3}{2}$ edges is $o(n^{-3n})$. 
 \end{theorem}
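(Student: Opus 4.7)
My plan is to invoke the hypergraph container method of Balogh--Morris--Samotij and Saxton--Thomason, specialized to $\{C_3,C_4\}$-free graphs by Morris and Saxton. Their machinery yields a family $\mathcal{C}$ of container graphs on $V_n$ such that (i) every girth-$5$ graph on $V_n$ is a spanning subgraph of some $C \in \mathcal{C}$, (ii) $|\mathcal{C}| \le 2^{\alpha n^{3/2}}$ for a constant $\alpha$ that can be made arbitrarily small, and (iii) each container has $|E(C)| \le K n^{3/2}$ for an absolute constant $K$. This kind of family is the engine already driving Theorem~\ref{MStheorem}.

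With the containers in hand, the number of girth-$5$ graphs $G$ on $V_n$ with $e(G) < \epsilon n^{3/2}$ is at most
\[
  |\mathcal{C}| \cdot \max_{C \in \mathcal{C}} \bigl|\{ G \subseteq C : e(G) < \epsilon n^{3/2} \}\bigr|
  \;\le\; 2^{\alpha n^{3/2}} \cdot \epsilon n^{3/2} \binom{K n^{3/2}}{\epsilon n^{3/2}}.
\]
A routine entropy estimate gives $\log_2 \binom{K n^{3/2}}{\epsilon n^{3/2}} \le \epsilon n^{3/2} \log_2(eK/\epsilon)$, so the total count of sparse girth-$5$ graphs is at most $2^{(\alpha + \epsilon \log_2(eK/\epsilon) + o(1)) n^{3/2}}$.

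Theorem~\ref{MStheorem} provides a lower bound of $2^{c n^{3/2}}$ on the number of girth-$5$ graphs on $V_n$ for some constant $c > 0$. The resulting proportion is therefore at most $2^{n^{3/2}(\alpha + \epsilon \log_2(eK/\epsilon) - c) + o(n^{3/2})}$. Choosing $\alpha$ and $\epsilon$ small enough (which is possible since $\epsilon \log_2(1/\epsilon) \to 0$ as $\epsilon \to 0$) forces the exponent to be at most $-(c/2) n^{3/2}$ for large $n$. Because $n^{3/2}$ dominates $n \log n$, this gives a proportion that is $o(2^{-3 n \log n}) = o(n^{-3n})$, as required.

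The main obstacle is the first step: establishing a container family with both $|\mathcal{C}|$ and $\max_C |E(C)|$ controlled in the tight regime near $\operatorname{ex}(n,C_4) = \Theta(n^{3/2})$. This requires suitable co-degree estimates for the $3$- and $4$-uniform hypergraphs whose edges are the triangles and $4$-cycles of $K_n$, together with a careful iteration of the container lemma of Balogh--Morris--Samotij / Saxton--Thomason. Once this is black-boxed from Morris and Saxton, the counting above is routine.
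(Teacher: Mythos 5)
Your argument is correct and runs through the same machinery as the paper's source: the paper dispenses with Theorem~\ref{belasam} by citing Corollary~1.3 of Morris and Saxton, and that corollary rests on exactly the container theorem for $C_4$-free graphs (containers of size $O(n^{3/2})$, count $2^{o(n^{3/2})}$) that you invoke. Spelling out the union bound over containers, the binomial entropy estimate, and the comparison against the $2^{\Theta(n^{3/2})}$ lower bound of Theorem~\ref{MStheorem} amounts to unwinding that citation one level, so there is no genuine divergence in approach.
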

 
 Along with these results, we need the following claims:
 
 \begin{claim}
 \label{girth5hd.claim}
 Let $G$ be a graph of girth 5 with $n$ vertices.  Then $G$ contains at most $\sqrt{n}$ vertices of degree exceeding $\frac{3\sqrt{n}}{2}$
 and the sum of the degrees of these vertices is at most $\frac{3n}{2}$.
 \end{claim}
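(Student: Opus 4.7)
Write $H$ for the set of vertices of degree exceeding $\frac{3\sqrt{n}}{2}$, let $h = |H|$, and let $D = \sum_{v \in H} d(v)$.  My plan is to prove $h < \sqrt{n}$ (so $h \le \sqrt{n}$ as $h$ is an integer) and then read off $D \le \frac{3n}{2}$.  The girth-$5$ hypothesis is used throughout in a single form: any two distinct vertices of $G$ share at most one common neighbor, since triangle-freeness handles adjacent pairs and $C_4$-freeness handles the rest.

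I combine two complementary upper bounds on $D$ in terms of $h$ with the trivial lower bound $D > \frac{3h\sqrt{n}}{2}$.  The first comes from inclusion-exclusion on $\bigcup_{v \in H} N(v) \subseteq V(G)$: Bonferroni together with the common-neighbor bound gives $D \le n + \binom{h}{2}$.  The second comes from counting cherries with both endpoints in $H$.  Setting $d_H(u) = |N(u) \cap H|$, the sum $\sum_u \binom{d_H(u)}{2}$ counts pairs in $\binom{H}{2}$ that have a common neighbor (with multiplicity), so it is at most $\binom{h}{2}$.  Hence $\sum_u d_H(u)^2 \le h(h-1) + D$, and Cauchy--Schwarz on $D = \sum_u d_H(u)$ yields $D^2 \le n\bigl(h(h-1) + D\bigr)$.

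Combining the inclusion-exclusion bound with the lower bound, and using $h(h-1) \le h^2$, gives $(h - \sqrt{n})(h - 2\sqrt{n}) > 0$, so either $h < \sqrt{n}$ or $h > 2\sqrt{n}$.  To rule out the second option, I substitute $D \le n + \frac{h^2}{2}$ into the Cauchy--Schwarz inequality, obtaining $D^2 \le nh^2 + nD \le \frac{3nh^2}{2} + n^2$.  The lower bound $D^2 > \frac{9h^2 n}{4}$ then forces $\frac{3h^2}{4} < n$, i.e.\ $h < 2\sqrt{n/3} < 2\sqrt{n}$, killing the bad case.  Therefore $h < \sqrt{n}$, whence $h^2 < n$ and so $D \le n + \binom{h}{2} < n + \frac{n}{2} = \frac{3n}{2}$.

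The main obstacle is that the inclusion-exclusion bound by itself leaves open a regime where $h$ is close to $2\sqrt{n}$ and $D$ close to $3n$, violating the desired degree-sum bound; a second inequality of a different flavour is needed to close it.  The cherry/Cauchy--Schwarz inequality does exactly this, and the fact that its threshold $2\sqrt{n/3}$ lands safely below $2\sqrt{n}$ is the numerical fit that makes the clean bound $h \le \sqrt{n}$ come out correctly (rather than some weaker constant such as $1.2\sqrt{n}$).
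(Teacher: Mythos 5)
Your proof is correct, but it takes a more elaborate route than the paper's. Both proofs hinge on the observation that girth $\ge 5$ forces any two vertices to have at most one common neighbour, and both use the Bonferroni/inclusion–exclusion bound $\sum_{v\in S} d(v) \le n + \binom{|S|}{2}$. The paper, however, applies this bound not to the whole set $H$ of high-degree vertices but to an arbitrary subset of exactly $\lceil\sqrt n\rceil$ of them: such a subset would have degree sum strictly greater than $\frac{3n}{2}$ yet at most $n + \binom{\sqrt n}{2} < \frac{3n}{2}$, an immediate contradiction that shows $|H| < \sqrt n$ with no quadratic case-split at all. You instead applied the bound to all of $H$, which produces the inequality $(h-\sqrt n)(h-2\sqrt n)>0$ and leaves open the regime $h>2\sqrt n$; closing that regime is what forced you to bring in the second, genuinely independent ingredient (the cherry count $\sum_u \binom{d_H(u)}{2} \le \binom h2$ plus Cauchy–Schwarz). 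Your argument is sound and the cherry/Cauchy–Schwarz step is a nice standard technique for girth-$5$ graphs, but it is extra machinery: the subset trick — bound only $\sqrt n$ of the high-degree vertices rather than all of them — is the simplification you missed, and it collapses the whole proof to two lines. One small cosmetic point: the parenthetical ``so $h\le\sqrt n$ as $h$ is an integer'' is unnecessary, since $h<\sqrt n$ already implies $h\le\sqrt n$ with no appeal to integrality.
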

 
 \begin{proof}
Any two vertices can have at most one common neighbour.
This implies that any $i$ of them have total degree at most $n+{i \choose 2}$, which would lead to a contradiction if there 
were $\sqrt{n}$ vertices of degree exceeding $\frac{3\sqrt{n}}{2}$.   The second bound follows since $i\le\sqrt n$ implies that $n+{i \choose 2}\le 3n/2$.
\end{proof}

 \begin{claim}
 \label{girth5hd.claim2}
We let $s(G)$ be the maximum size of a subset $S$ of $V(G)$ such that $S$ is a stable set and there is no vertex of $G$ with edges to two vertices of $S$.
 The proportion of graphs of girth at least five on $V_n$ vertices for which $s(G)> 3n^{9/10}$ is $2^{-\omega(n)}$.
 \end{claim}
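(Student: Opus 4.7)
The plan is to run a direct counting argument. Let $k=\lceil 3n^{9/10}\rceil$ and write $F(m)$ for the number of girth-$5$ graphs on an $m$-element vertex set. The idea is to upper-bound, for each candidate witness $S\subseteq V_n$ with $|S|=k$, the number of girth-$5$ graphs on $V_n$ in which $S$ realises the condition defining $s(G)$, then sum over $S$ and compare to $F(n)$.

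First I would note that the conditions on $S$ (stability, and no vertex with two neighbors in $S$) force the closed neighborhoods $\{v\}\cup N_G(v)$ for $v\in S$ to be pairwise disjoint subsets of $V_n$; in particular $S$ is a set at pairwise $G$-distance at least $3$, and $\sum_{v\in S}\deg_G(v)\le n-|S|$. Then I would encode each such $G$ by the pair $(H,\phi)$, where $H:=G[V_n\setminus S]$ is a girth-$5$ graph on $n-k$ vertices and $\phi:V_n\setminus S\to S\cup\{\star\}$ records the unique $S$-neighbor of each outside vertex (or $\star$ if none). This gives at most $F(n-k)(k+1)^{n-k}$ graphs per $S$, and so by a union bound
\[
\bigl|\{\,G\text{ girth-}5\text{ on }V_n:\, s(G)>k\,\}\bigr|\le \binom{n}{k}\,F(n-k)\,(k+1)^{n-k},
\]
in which the combinatorial prefactor $\binom{n}{k}(k+1)^{n-k}$ is $2^{O(n\log n)}$.

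To close the argument I would apply Theorem~\ref{MStheorem} in the sharp form $\log F(m)=\alpha m^{3/2}+o(m^{3/2})$ for a specific constant $\alpha>0$. Since $n^{3/2}-(n-k)^{3/2}=\tfrac{3}{2}k\sqrt n\,(1+o(1))$, this yields $\log F(n)-\log F(n-k)=\Omega(k\sqrt n)=\Omega(n^{1.4})$, which dominates the $O(n\log n)$ prefactor and gives the desired bound $2^{-\Omega(n^{1.4})}=2^{-\omega(n)}$ on the proportion. The hard part will be extracting a Morris--Saxton estimate precise enough to guarantee $\log F(n)-\log F(n-k)=\Omega(n^{1.4})$: the naive $2^{\Theta(n^{3/2})}$ bound with mismatched constants is insufficient, since the saving we need is of order $n^{1.4}$, considerably smaller than $n^{3/2}$. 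If a sufficiently sharp asymptotic is unavailable, I would refine the encoding by exploiting that each fiber $\phi^{-1}(v)$ must be stable and contain no two vertices with a common $H$-neighbor (otherwise $G$ would contain a $C_3$ or $C_4$ through $v$), and combine this with Claim~\ref{belasam} to restrict attention to $H$ with $\Omega((n-k)^{3/2})$ edges, cutting the count of valid $\phi$ well below $(k+1)^{n-k}$ and thereby relaxing the precision demanded of $F(n)/F(n-k)$.
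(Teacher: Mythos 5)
You take a genuinely different route than the paper: a direct injective encoding $(H,\phi)$ plus a union bound over candidate witnesses $S$, closed by a precision estimate on the counting function $F(m)$ for girth-$5$ graphs. The paper instead first passes (via Claim~\ref{girth5hd.claim} and Theorem~\ref{belasam}) to a smaller low-degree witness set $D$ inside a dense girth-$5$ graph, and then runs a switching argument: it compares the family $\mathcal D_{j-1}$ (where $D$ spans $j-1$ edges) to $\mathcal D_j$ via an auxiliary bipartite graph $B_j$, bounding the maximum degree on the $\mathcal D_j$-side and the minimum degree on the $\mathcal D_{j-1}$-side to get $|\mathcal D_{j-1}|\le n^{-1/10}|\mathcal D_j|$, and iterates. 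This shows directly that having $D$ stable and with pairwise-disjoint neighborhoods is exponentially unlikely, with no fine asymptotic control of $F$ required.

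The gap in your proposal is exactly the one you flag, and it is not easily patched. Theorem~\ref{MStheorem}, the only Morris--Saxton input available, gives $F(m)=2^{\Theta(m^{3/2})}$ with uncontrolled implied constants, so $\log F(n)-\log F(n-k)$ cannot be lower bounded from it at all (it could even be negative relative to the stated bounds). Even the sharper form $\log F(m)=(\alpha+o(1))m^{3/2}$ would not close the argument: your intended saving is $\alpha\bigl(n^{3/2}-(n-k)^{3/2}\bigr)=\Theta(n^{1.4})$, but the two $o(m^{3/2})$ error terms (at $m=n$ and $m=n-k$) may individually exceed $n^{1.4}$ and so swallow it. You would need a second-order asymptotic with error $o(n^{1.4})$, i.e.\ relative accuracy $o(n^{-0.1})$, which is not supplied. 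Your fallback (counting only $\phi$ whose fibers are stable and have no common $H$-neighbor, combined with Claim~\ref{belasam}) is in the right spirit but left unquantified; the paper's switching argument is in effect the way to make that intuition precise while bypassing any delicate cancellation in estimates of $F$.
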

 
 \begin{proof}
 Suppose  some  set $S$ of $\lceil 3n^{9/10} \rceil $  vertices of $G$ is stable, and no 
 vertex has edges to two of its elements.  We note that this implies that the sum of the degrees of the vertices in $S$ 
 is less than $n$. Combined with Claim \ref{girth5hd.claim} it also implies that the number of vertices of $S$ adjacent to a vertex of degree 
 exceeding $\frac{3\sqrt{n}}{2}$ is at most $\sqrt{n}$. Thus, for sufficiently large $n$ there is a subset $S'$ of $S$ of size 
 $\lceil n^{9/10} \rceil $ every vertex of which has degree less than $n^{1/10}$ and has no neighbour of degree exceeding 
 $\frac{3\sqrt{n}}{2}$.  
 
 Theorem \ref{belasam} implies that to  complete the proof, we need only show that for  any $\epsilon>0$ and each subset $D$ of $V_n$ of  size  $\lceil n^{9/10} \rceil $  the proportion  
   of graphs of girth at least five  on $V_n$ which have  at least $\epsilon n^{3/2}$ edges in which $D$  can play the role of $S'$ is $2^{-\omega(n)}$
 
  In order to do so,  for every $j<n^{21/20}$, we consider  the family ${\cal D}_j$ of those $G$ satisfying the following:
  \begin{enumerate}
  \item[(i)] $G$ has  girth at least five and more than $\epsilon n^{3/2}$ edges,
  \item[(ii)] no vertex of $D$ has a neighbour of degree exceeding $\frac{3\sqrt{n}}{2}$.  
  \item[(iii)] no vertex outside $D$ sees two vertices of $D$, 
 \item[(iv)] $D$   spans $j$ edges, the sum of the degrees of the vertices of $D_j$ is at most $n+2j$ and  no vertex of $D$ has degree exceeding $n^{1/4}$. 
 \end{enumerate}
  
 For each $j$ between $1$ and $n^{21/20}$,  we consider an auxiliary bipartite graph  $B_j$ where one side consists of the elements of ${\cal D}_j$ and the other consists 
 of the elements of ${\cal D}_{j-1}$. Two graphs are joined  by an edge in  $B_j$ if the graph in ${\cal D}_j$ can be obtained from the graph in ${\cal D}_{j-1}$
 by adding an edge with endpoints in  $D$ and deleting an edge with at least one endpoint outside $D$.
 
 Each graph in ${\cal D}_j$ has degree at most  $jn^2<n^{31/10}$  in $B_j$.
 
  For any graph in ${\cal D}_{j-1}$  to obtain a neighbour in $B_j$ we can delete any of the at least 
 $\epsilon  n^{3/2}-j >\frac{\epsilon n^{3/2}}{2}$ of its edges  whose endpoints are not both in $D$ and add any edge within $D$ between vertices each of which have degree lesss than  $n^{1/4}-1$
 which are not joined by a path with at most $3$ edges. Since the sum of the degrees of the  $\lceil n^{9/10} \rceil $ vertices of $D$ is at most $n+2j<3n^{21/20}$,
 more than half the vertices of $D$ have degree less than  $n^{1/4}-1$. For any such vertex $z$, there are less than $2n^{3/4}$ vertices of $D$ which. are 
 joined to $z$ by a path with at most 3 edges remaining in $D$. Furthermore, since every vertex outside $D$ has only one neighbour in $D$, and every 
 neighbour of $z$ has at most $\frac{3\sqrt{n}}{2}$ neighbours, there are less than $\frac{3n^{3/4}}{2}$ vertices of $D$ which. are 
 joined to $z$ by a path with at most 3 edges which leaves $D$. It follows that the number of $v$ in  $D$ for which adding the  edge $zv$ 
 and deleting any edge not within $D$ yileds a graph in ${\cal D}_j$ is at least $\frac{|D|}{3}$. So 
 every  graph in ${\cal D}_{j-1}$ has degree at least $\frac{|D|^2\epsilon n^{3/2}}{12}>n^{32/10}$ in $B_j$. 
 
 Counting the edges of $B_j$ from both sides we obtain: $|{\cal D}_{j-1}|<n^{1/10}|{\cal D}_j|$ and hence $ |{\cal D}_0|<n^{\frac{n^{21/20}}{10}}|{\cal D}_{n^{21/20}}|$.
 
\end{proof}

 \begin{claim}
 \label{girth5hd.claim3}
 The proportion of graphs of girth at least five on $V_n$  with maximum degree exceeding $\frac{n}{\log \log n}$ is $2^{-\omega(n)}$.
 \end{claim}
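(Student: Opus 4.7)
The plan is to derive Claim \ref{girth5hd.claim3} from Claim \ref{girth5hd.claim2}. The key observation is that if $v$ is a vertex of degree $d$ in a graph $G$ of girth at least $5$, then $N_G(v)$ is stable (since $G$ is triangle-free) and no vertex of $G$ other than $v$ can have two neighbours in $N_G(v)$ (since any such vertex $w$ together with $v$ and the two common neighbours would form a $C_4$). Setting $G' := G - v$, which is itself a graph of girth at least $5$ on $V_n \setminus \{v\}$, the set $N_G(v)$ therefore witnesses $s(G') \ge d$.

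Fix $v \in V_n$ and suppose $G$ has girth at least $5$ and $\deg_G(v) > n/\log\log n$. Then $G$ is uniquely determined by the pair $(G-v, N_G(v))$, and by the observation above $s(G-v) > n/\log\log n > 3(n-1)^{9/10}$ for $n$ sufficiently large. Applying Claim \ref{girth5hd.claim2} on $V_n \setminus \{v\}$ bounds the number of admissible $G-v$ by $2^{-\omega(n)} M_{n-1}$, where I write $M_k$ for the total number of graphs of girth at least $5$ on a $k$-vertex set (finite and of size $2^{\Theta(k^{3/2})}$ by Theorem \ref{MStheorem}). Bounding the number of possible $N_G(v)$ crudely by $2^{n-1}$ and using $M_{n-1} \le M_n$ yields
\[
\frac{|\{G : \deg_G(v) > n/\log\log n\}|}{M_n} \;\le\; 2^{n-1}\cdot 2^{-\omega(n)}.
\]
A union bound over the $n$ choices of $v$ then gives the desired proportion bound, provided the rate $2^{-\omega(n)}$ inherited from Claim \ref{girth5hd.claim2} absorbs the $n\cdot 2^{n-1}$ loss.

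The main obstacle is therefore to verify that the implicit $\omega(n)$ in Claim \ref{girth5hd.claim2} is comfortably superlinear in $n$. Inspecting its proof shows that the bound actually has the form $\binom{n}{\lceil 3n^{9/10}\rceil}\cdot n^{-n^{21/20}/10} = 2^{-\Omega(n^{21/20}\log n)}$: the first factor accounts for the choice of the putative set $D$, and the second comes from iterating the switching inequality $|\mathcal D_{j-1}|\le n^{-1/10}|\mathcal D_j|$ for $j$ up to $n^{21/20}$. Since $n^{21/20}\log n$ dwarfs $n$, the final proportion is at most $n \cdot 2^{n-1}\cdot 2^{-\Omega(n^{21/20}\log n)} = 2^{-\omega(n)}$, as required. (For small $n$ the bound $n/\log\log n$ can exceed $n-1$ and the claim is vacuous, so there is no issue with the asymptotic regime.)
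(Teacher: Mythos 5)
Your proof is correct, and it takes a genuinely different route from the paper's. The paper establishes the claim with a fresh switching argument tailored to the degree of a fixed vertex $v$: it defines families $\mathcal{D}_j$ of girth-$\ge 5$ graphs in which $\deg(v)=j$, builds an auxiliary bipartite graph between $\mathcal D_j$ and $\mathcal D_{j-2}$ in which an edge corresponds to deleting one edge of $G$ and rerouting its two endpoints to $v$, and iterates the resulting degree inequality to deduce $|\mathcal D_{n/\log\log n}|$ is a $2^{-\omega(n)}$ fraction of $|\mathcal D_{n/(2\log\log n)}|$. You instead reduce to Claim \ref{girth5hd.claim2} via the clean observation that $N_G(v)$, viewed inside $G-v$, is a stable set no vertex of which is a common neighbour of two of its elements, so a high-degree vertex forces $s(G-v)$ to be large; the map $G \mapsto (G-v, N_G(v))$ is injective onto (graph on $V_n\setminus\{v\}$ of girth $\ge5$, admissible neighbourhood) pairs, giving the bound after a crude $2^{n-1}$ factor for $N_G(v)$ and a union bound over $v$. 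The reduction is shorter and avoids duplicating switching machinery, but it requires unpacking the implicit rate in Claim \ref{girth5hd.claim2} (as stated, $2^{-\omega(n)}$ is not strong enough to absorb $n\cdot 2^{n-1}$). Your reading of the switching iteration there is essentially right; the one imprecision is that you ignore the contribution from Theorem \ref{belasam}, which handles the sparse graphs and is $o(n^{-3n})=2^{-\Omega(n\log n)}$ — this, not the $2^{-\Omega(n^{21/20}\log n)}$ switching term, is actually the dominant (larger) contribution to the bound in Claim \ref{girth5hd.claim2}. Since $2^{-\Omega(n\log n)}$ still comfortably absorbs $n\cdot2^{n-1}$, your conclusion stands unchanged.
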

 
 \begin{proof}

 It is enough to show for each vertex $v$  in $V_n$, that the proportion of graphs of girth at least five on $V_n$  in which $v$ has degree exceeding  $\frac{n}{\log \log n}$ is $2^{-\omega(n)}$.
  
 For each even  $j$ between $\lceil \frac{n}{2 \log \log n} \rceil$ and $\lceil \frac{n}{ \log \log n} \rceil$,  we let ${\cal D}_j$  be the graphs of girth five  on $V_n$  in which 
 $v$ has degree $j$. We  consider an auxiliary bipartite graph  $B_j$ where one side consists of the elements of ${\cal D}_j$ and the other consists 
 of the elements of ${\cal D}_{j-2}$. Two graphs are joined  by an edge in  $B_j$ if the graph in ${\cal D}_j$ can be obtained from the graph in ${\cal D}_{j-2}$
 by deleting some edge and adding edges from  its  endpoints to  $v$. 
 
 Now, Claim \ref{girth5hd.claim} implies that a graph of girth 5 has fewer than $3n^{3/2}$ edges so each graph in ${\cal D}_{j-2}$ has degree at most  $3n^{3/2}$  in $B_j$.
 
  For any graph in ${\cal D}_{j}$  to obtain a neighbour in $B_j$ we can delete the edges to any pair of neighbours of $v$  which are not joined by a path of $G-v$  with at most three edges. 

  Since $G$ has girth 5, $N(v)$ is stable, and each vertex has at most one neighbour in $N(v)$. So, there are at most $\sqrt{n}$ vertices of $N(v)$ joined to a vertex  of degree exceeding $\frac{3 \sqrt{n}}{2}$ and $n^{3/4}$ which have degree exceeding $n^{1/4}$.
   
Thus,   more than half the vertices of $N(v)$ have degree less than $n^{1/4}$ and have no neighbour   of degree exceeding $\frac{3 \sqrt{n}}{2}$. . For any such vertex $z$, since every vertex outside $N(v)$ has only one neighbour in $D$, and every 
 neighbour of $z$ has at most $\frac{3\sqrt{n}}{2}$ neighbours, there are less than $\frac{3n^{3/4}}{2}$ vertices of $N(v)$ which. are 
 joined to $z$ by a path with at most 3 edges. It follows that the number of $y$ in  $N(v)$ for which deleting  $vz$ and $vy$ and adding the edge 
 $yz$ yileds a graph in ${\cal D}_{j-2}$ is at least $\frac{|N(v)|}{3}$. So, 
 every  graph in ${\cal D}_{j}$ has degree at least $\frac{j^2}{6}>3n^{7/4}$ in $B_j$. 
 
 Counting the edges of $B_j$ from both sides we obtain: $|{\cal D}_{j}|<n^{-1/4}|{\cal D}_{j-2}|$ and hence $ |{\cal D}_\frac{n}{\log \log n}|<n^{\frac{-n}{10 \log \log n}}|{\cal D}_\frac{n}{2\log \log n}|$.

\end{proof}

It will be useful to have estimates on the size of several families.  Let $\mathcal G_n$ be the set of graphs with vertex set $V_n$, and define
\begin{align*}
{\mathcal F}^*_1(n)&=\{G\in\mathcal G_n:\mbox{$\overline G$ is a disjoint union of stars and triangles}\}\\ 
{\mathcal F}^*_2(n)&=\{G\in\mathcal G_n:\mbox{$\overline G$  is a disjoint union of stars and cliques}\}\\ 
{\mathcal F}^*_3(n)&=\{G\in\mathcal G_n:\mbox{every component of $\overline G$ is the join of a clique and a stable set}\}.
\end{align*}
We include trivial stars (so $K_1$ is allowed) and empty cliques and stable sets in these definitions.  We also define, for $i=1,2,3$,
$f^*_i(n)=|\mathcal F^*_i(n)|$.  Finally, let $B_n$ be the $n$th Bell number (the number of partitions of $[n]$).

\begin{lemma}\label{finsize}
 For $i=1,2,3$, $f^*_i(n)$ is monotonic in $n$ and $B_n\le f^*_i(n)\le 2^nB_n$.
\end{lemma}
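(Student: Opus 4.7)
The plan is to establish the monotonicity and the two bounds by direct combinatorial constructions. For the lower bound $B_n \le f^*_i(n)$, I will exhibit an injection from partitions of $[n]$ into each $\mathcal F^*_i(n)$. Given a partition $P = \{B_1, \ldots, B_k\}$ of $[n]$, I define $G_P$ so that $\overline{G_P}$ has the pairs inside each block as its edges, making $\overline{G_P}$ a disjoint union of cliques. For $\mathcal F^*_2$ this is immediate since a clique is an allowed component, and for $\mathcal F^*_3$ each clique is the join of itself with an empty stable set. For $\mathcal F^*_1$, if some block has size at least $4$ I replace the clique on $B_j$ by a star centered at $\min B_j$, so that every component of $\overline{G_P}$ is a trivial star, an edge, a triangle, or a larger star. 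In every case $P$ is recoverable from $\overline{G_P}$ as its component partition, so the map is injective.

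For the upper bound $f^*_i(n) \le 2^n B_n$, I will count graphs $G \in \mathcal F^*_i(n)$ by first specifying the component partition of $\overline G$ and then the structure within each component. On a fixed component of size $k$, the number of legal internal structures is at most $k+1$ for $\mathcal F^*_1$ and $\mathcal F^*_2$ (choose the star center, plus the extra option of a triangle or clique when applicable) and at most $2^k$ for $\mathcal F^*_3$ (choose which subset of the component forms the clique part of the join). In all three cases this count is at most $2^k$, so summing the product over all partitions of $[n]$ yields $f^*_i(n) \le \sum_P \prod_j 2^{|B_j|} = 2^n B_n$.

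For monotonicity, I embed $\mathcal F^*_i(n)$ into $\mathcal F^*_i(n+1)$ by sending $G$ to the graph $G^+$ on $V_{n+1}$ in which vertex $n+1$ is made adjacent to every vertex of $V_n$. The complement $\overline{G^+}$ is the disjoint union of $\overline G$ and an isolated vertex, and an isolated vertex is simultaneously a trivial star, a singleton clique, and the join of $K_1$ with the empty stable set, so $G^+$ lies in $\mathcal F^*_i(n+1)$. The map is visibly injective, giving $f^*_i(n) \le f^*_i(n+1)$. No step presents a substantial obstacle; the main care lies in verifying that the per-component structures listed in the upper bound really cover every possible component of a graph in $\mathcal F^*_i$ and in noting the elementary inequality $k+1 \le 2^k$ needed to unify the three cases.
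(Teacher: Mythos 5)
Your proof is correct and follows essentially the same approach as the paper: partition-plus-subset encoding for the upper bound, a union of cliques or stars in the complement for the lower bound, and adding a universal vertex (isolated in the complement) for monotonicity. The only cosmetic difference is that you establish the $2^n$ factor in the upper bound by a per-component count $\prod_j 2^{|B_j|}$ and the elementary estimate $k+1\le 2^k$, whereas the paper encodes the same information globally as a single subset of $V_n$ (the vertices of degree at least $2$, or the clique vertices); both reduce to counting pairs (partition, subset of $V_n$).
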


\begin{proof} We work with $\overline G$ rather than $G$ in each case, so our graphs are disjoint unions of (i)stars and triangles, (ii) stars and cliques, or (ii) 
joins of a clique and a stable set. The lower bound is trivial in each case, as for every
partition of $V_n$ we can place a star or clique component into each set.

We can specify the complement of a graph in $\mathcal F^*_1(n)$ or $\mathcal F^*_2(n)$ by giving the partition of $V_n$ into vertex sets of components and the set of vertices with degree at least 2.
Similarly, we can specify the complement of a graph in $\mathcal F^*_3(n)$ by giving the partition of $V_n$ into vertex sets of components 
and the set of clique vertices.  This gives the upper bound.

Finally, monotonicity follows by considering elements of $\mathcal F^*_i(n)$ for which $n$ is a universal  vertex.
\end{proof}

\begin{corollary}
For every $l>3$, the number of $C_{2l}$ free graphs is at least $2^{(1-\frac{1}{l-1}){n \choose 2}} B_{\lceil \frac{n}{l-1} \rceil}$. 
\end{corollary}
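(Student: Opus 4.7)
The plan is to construct many $C_{2l}$-free graphs on $V_n$ by fixing a partition $S_1,\dots,S_{l-1}$ with each $|S_i|\in\{\lfloor n/(l-1)\rfloor,\lceil n/(l-1)\rceil\}$ and $|S_{l-1}|=\lceil n/(l-1)\rceil$. For every graph $H\in{\cal F}^*_1(|S_{l-1}|)$ and every subset $E$ of the pairs of vertices lying in distinct parts, form $G(H,E)$ by making each of $S_1,\dots,S_{l-2}$ a clique, declaring $G[S_{l-1}]=H$, and taking $E$ as the cross-edges. Distinct $(H,E)$ plainly yield distinct graphs, so there are $f^*_1(|S_{l-1}|)\cdot 2^m$ such $G$, where $m=\binom{n}{2}-\sum_i\binom{|S_i|}{2}$. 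Lemma \ref{finsize} gives $f^*_1(|S_{l-1}|)\ge B_{\lceil n/(l-1)\rceil}$, and a short convexity computation on the nearly-equal partition yields $m\ge (1-1/(l-1))\binom{n}{2}$; provided every $G(H,E)$ is $C_{2l}$-free, multiplying these bounds gives the statement.

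The main obstacle is verifying $C_{2l}$-freeness. Suppose some $G(H,E)$ contains an induced $C_{2l}$, and set $a_i=|V(C_{2l})\cap S_i|$. Since $C_{2l}$ has clique number two and each of $S_1,\dots,S_{l-2}$ is a clique, $a_i\le 2$ for $i\le l-2$, forcing $a_{l-1}\ge 2l-2(l-2)=4$. The induced subgraph $P$ of the cycle on $V(C_{2l})\cap S_{l-1}$ is a disjoint union of paths on at least four vertices, and simultaneously an induced subgraph of $H\in{\cal F}^*_1$. So $\overline P$ is an induced subgraph of $\overline H$, and since any induced subgraph of a disjoint union of stars and triangles is again such a union (allowing trivial stars), $\overline P$ itself must be a disjoint union of stars and triangles.

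It remains to show no disjoint union of paths $P$ on $k\ge 4$ vertices has $\overline P$ of this form. If $P=P_k$ is connected, then $\overline{P_k}$ is connected (the edges $1{-}3$, $1{-}4$, $2{-}4$ already witness this for $k\ge 4$) with maximum degree $k-2$, so being a disjoint union of stars and triangles forces it to be a single star or triangle; but a $k$-vertex star needs max degree $k-1$, and a triangle needs $k=3$, neither possible for $k\ge 4$. If $P$ has at least two components, then $\overline P$ is connected, and again must be a single star or triangle: the triangle case forces $k=3$, while in the star case the hypothetical center is isolated in $P$ and every other vertex has degree $k-2$ in $P$; since paths have maximum degree two this gives $k\le 4$, and for $k=4$ three vertices of degree two would have to form a $2$-regular subgraph, i.e., a cycle in $P$, which is impossible. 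This contradiction completes the proof.
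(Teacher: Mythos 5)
Your construction is exactly the one the paper intends (and leaves implicit): fix a near-balanced partition into $l-1$ parts, make $l-2$ of them cliques, let the remaining part induce an arbitrary member of $\mathcal F^*_1$, and allow arbitrary cross-edges. Lemma \ref{finsize} gives $f^*_1(\lceil n/(l-1)\rceil)\ge B_{\lceil n/(l-1)\rceil}$, and the cross-edge count is verified as you say. The main difference is in how the $C_{2l}$-freeness is certified. The paper appeals to its surrounding discussion: every member of $\mathcal F^*_1$ has at most 3 vertices, or a disconnected complement, or contains a $C_3$, and $C_{2l}$ admits no partition into $l-2$ cliques and such a graph. You instead argue directly from the structure of an induced subgraph $P$ of $C_{2l}$ landing in $S_{l-1}$, showing $\overline P$ cannot be a disjoint union of stars and triangles. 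Both arguments are essentially equivalent in spirit; yours is self-contained and checks the nontrivial combinatorial fact in detail, which the paper does not spell out.

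There is one small gap in your argument. You assert that $P$, the induced subgraph of the cycle on $V(C_{2l})\cap S_{l-1}$, is a disjoint union of paths. This holds only if $V(C_{2l})\cap S_{l-1}$ is a proper subset of $V(C_{2l})$, but you never rule out $a_{l-1}=2l$, in which case $P$ is the whole cycle $C_{2l}$ and your two cases (a single path, or several path components) do not apply. The fix is easy and in the same spirit as your Case~1: $\overline{C_{2l}}$ is connected (for $2l\ge5$ its minimum degree is $2l-3\ge5$), so if $\overline{C_{2l}}$ were a disjoint union of stars and triangles it would be a single star or a single triangle; but a triangle has $3\ne 2l$ vertices, and a star on $2l$ vertices has maximum degree $2l-1$ while $\overline{C_{2l}}$ has maximum degree $2l-3$. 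With that case added, the proof is complete. (Alternatively, note that every disjoint union of stars and triangles has a vertex of degree at most $2$, while every vertex of $\overline{C_{2l}}$ has degree $2l-3\ge5$; this subsumes the $a_{l-1}=2l$ case immediately.)
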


We need some results on the asymptotics of these functions
 
\begin{observation}
\label{obs:bruce4} 
Setting $l=l(n)=l=\lceil \frac{n}{\ln n} \rceil$,   for $n \ge 8$ we have $f_i(n)>B_n \ge \frac{l^n}{2(l!)}> l^{n-l}>({l}/{2})^n$. 
Moreover,  as shown by  de Bruijn \cite{deB}, $B_n=\frac{l^n}{l!} 2^{(1+o(1))n}$. 
Finally, the  proportion  of partitions  of $[n]$ which have fewer than $\frac{l}{ \alpha}$ nonsingeleton parts 
 is less than  $(\frac{\alpha}{4})^{-n}$. 
\end{observation}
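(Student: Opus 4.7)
The observation has three separate claims. For the inequality chain $f^*_i(n)>B_n\ge l^n/(2\,l!)>l^{n-l}>(l/2)^n$, the first inequality is immediate from Lemma~\ref{finsize}. For the second, I would count partitions via functions $\phi\colon[n]\to[l]$: each unordered partition of $[n]$ into $k\le l$ blocks arises from exactly $l!/(l-k)!\le l!$ such functions (by choosing an injective labeling of its blocks by elements of $[l]$), and hence $l^n\le l!\,B_n$, giving $B_n\ge l^n/l!>l^n/(2\,l!)$. The third inequality reduces to $l^l>2\,l!$, which follows from Stirling (or a trivial induction) whenever $l\ge 3$, and $n\ge 8$ forces $l\ge 4$. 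The fourth inequality reduces to $2^n>l^l$, to be verified by a short calculation from $l=\lceil n/\ln n\rceil$. Finally, the de Bruijn formula $B_n=(l^n/l!)\,2^{(1+o(1))n}$ is invoked by direct citation to \cite{deB}.

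For the sparse-nonsingleton bound, I would encode each partition of $[n]$ with exactly $k$ nonsingleton blocks by assigning each element a label in $\{\ast,1,\dots,k\}$ (with $\ast$ meaning ``singleton'' and $1,\dots,k$ indexing the blocks), then dividing by $k!$ to account for block relabelings; this gives at most $(k+1)^n/k!$ such partitions. Summing over $k<l/\alpha$, the total is dominated (up to a factor of $2$) by its last term, since the ratio of consecutive terms exceeds $1$ as long as $k+1<n/\ln(k+1)$, a condition that holds throughout the range. Dividing by the lower bound $B_n\ge l^n/l!$ and using $l\ln l=n(1+o(1))$ to estimate $l!/(l/\alpha)!\le l^{l(1-1/\alpha)+O(1)}$, the resulting ratio is at most a polynomial factor in $n$ times $(e^{1-1/\alpha}/\alpha)^n$; since $e^{1-1/\alpha}\le e<4$ for every $\alpha>0$, this is bounded by $(4/\alpha)^n=(\alpha/4)^{-n}$, as required.

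The main technical obstacle is the sparse-nonsingleton estimate: several asymptotically tight approximations (Stirling for both $l!$ and $(l/\alpha)!$, together with the precise form of $l$) must be tracked without losing an exponential constant in the exponent. A secondary point of care is the inequality $l^{n-l}>(l/2)^n$, which is tight at $n=8$ and requires the definition of $l$ to be used exactly; a small-$n$ check (or a minor rescaling of the constant $2$) may be needed to secure the strict inequality for all $n\ge 8$.
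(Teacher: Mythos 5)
Your proposal is correct in substance and takes a genuinely different route from the paper's in the two places where there is any real content. For the bound $B_n\ge l^n/(2\,l!)$, the paper argues probabilistically: throw each of $[n]$ into one of $l$ boxes uniformly at random, observe that the expected number of empty boxes is $l(1-1/l)^n<\tfrac12$ for $n\ge8$, conclude that at least half of the $l^n$ placements are surjective, and divide by $l!$. Your double-counting argument --- each partition into $k\le l$ blocks arises from exactly $l!/(l-k)!\le l!$ functions, so $l^n\le l!\,B_n$ --- is cleaner and even gives the sharper $B_n\ge l^n/l!$ without the factor of $2$. For the proportion bound, the paper simply counts partitions with fewer than $l/\alpha$ nonsingleton parts by $2^n(l/\alpha)^n$ (choose the singletons, then assign each remaining element to a part), observes this equals $(l/2)^n(\alpha/4)^{-n}$, and divides by $B_n>(l/2)^n$ from the chain. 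Your approach bounds the same count by $\sum_{k<l/\alpha}(k+1)^n/k!$ (labels plus a $k!$ quotient), identifies the dominant term, and closes with Stirling estimates on $l!/(l/\alpha)!$ against $B_n\ge l^n/l!$. Your version is more work, but it buys genuine robustness: it never routes through the fragile inequality $l^{n-l}>(l/2)^n$, equivalently $2^n>l^l$. You rightly flag this step --- at $n=8$ one has $l=4$ and $4^4=2^8$ exactly, so the ``$>$'' as written already fails there, and in fact the same inequality fails again once $n$ is large (since $\ln\ln n/\ln n\to0$ while $1-\ln2>0$), so the paper's chain cannot be salvaged just by fiddling with the base case. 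One small imprecision in your write-up: the correction factor coming from the $o(1)$ in $l\ln l=n(1+o(1))$ is subexponential, $e^{o(n)}$, not polynomial in $n$; this is still comfortably absorbed by the slack $e^{1-1/\alpha}\le e<4$, so the conclusion stands, but the phrase ``polynomial factor'' undersells what you have to absorb. You might also note explicitly that the claimed proportion bound $(\alpha/4)^{-n}$ is vacuous for $\alpha\le4$, so the estimate only needs to be established for $\alpha>4$ (the paper applies it with $\alpha=8$), which also removes the need to worry about the monotonicity of the summands when $\alpha<1$.
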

\begin{proof}
For $n \ge 8$, if we partition $V_n$ into $l=\lceil \frac{n}{\ln n} \rceil$ sets 
by randomly placing each vertex into a set then the expected number of empty sets 
is $l(1-\frac{1}{l})^n$, which is less $\frac{1}{2}$. So 
$f_i(n)>B_n \ge \frac{l^n}{2(l!)}> l^{n-l}>({l}/{2})^n$.
Now the number of partitions of $V_n$ into  a set of singletons and  at most ${l}/{\alpha}$ 
parts is less than $2^n({l}/{\alpha})^n<(l/2)^n\cdot (\frac{\alpha}{4})^{-n}$. 
\end{proof}

\begin{observation}
\label{obs:bruce5}
The proportion  of partitions  of $N$ which have  more than $\frac{n}{\sqrt{\ln n}}$ 
parts is    is  $o(2^{-n(\ln n)^{1/3}})$. 
\end{observation}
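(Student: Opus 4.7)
The plan is to stratify partitions of $V_n$ by the exact number of parts $k$, bound each $S(n,k)$ by the elementary inequality $S(n,k)\le k^n/k!$ (which follows since $k!\cdot S(n,k)$ counts surjections $V_n\to[k]$ and there are at most $k^n$ functions), and compare the total contribution from large $k$ to the lower bound $B_n\ge l^n/(2l!)$ from Observation~\ref{obs:bruce4}. Write $g(k):=k^n/k!$, $l:=\lceil n/\ln n\rceil$ and $m:=\lceil n/\sqrt{\ln n}\rceil$.

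First I would verify that $g$ is unimodal with maximum within a constant factor of $l$, by examining the ratio $g(k+1)/g(k)=(1+1/k)^n/(k+1)$, which exceeds $1$ exactly when $n\ln(1+1/k)\ge\ln(k+1)$, i.e.\ essentially when $k\lesssim l$. In particular $g$ is strictly decreasing on $\{m,m+1,\dots,n\}$, so the total number of partitions of $V_n$ with more than $m$ parts is at most $\sum_{k>m}g(k)\le n\,g(m)$. Combined with $B_n\ge g(l)/2$, the proportion in question is at most $2n\,g(m)/g(l)$, and it suffices to show $\ln\bigl(g(m)/g(l)\bigr)=-n\sqrt{\ln n}\,(1-o(1))$.

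The heart of the argument is this estimate, which I would derive by applying Stirling's formula $\ln k!=k\ln k-k+O(\ln k)$ uniformly at $k=l$ and $k=m$. At $k=l$, the computation yields
\[
\ln g(l)=n\ln n-n\ln\ln n-n+O\!\left(\tfrac{n\ln\ln n}{\ln n}\right),
\]
while at $k=m$ it yields
\[
\ln g(m)=n\ln n-\tfrac{n}{2}\ln\ln n-n\sqrt{\ln n}+O\!\left(\tfrac{n\ln\ln n}{\sqrt{\ln n}}\right).
\]
Subtracting, the leading $n\ln n$ terms cancel and the residual is dominated by $-n\sqrt{\ln n}$. Since $\sqrt{\ln n}/(\ln n)^{1/3}=(\ln n)^{1/6}\to\infty$, this comfortably beats the target $2^{-n(\ln n)^{1/3}}=e^{-(\ln 2)n(\ln n)^{1/3}}$, absorbing the harmless $2n$ prefactor.

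The main obstacle is merely the bookkeeping: we compare two quantities whose logarithms are each of size $n\ln n$ but whose difference is of the much smaller size $n\sqrt{\ln n}$, so every lower-order term (the $(n/2)\ln\ln n$, the $n$, the $O(\ln k)$ from Stirling, and the ceiling adjustments in $l$ and $m$) must be shown to be $o(n\sqrt{\ln n})$ before it can be safely discarded. Each such term is manifestly of order $o(n\sqrt{\ln n})$, so the estimate goes through, but the inequalities have to be arranged so that positive error terms land on the right side throughout.
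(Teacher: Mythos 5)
Your proposal is correct and follows essentially the same route as the paper: both bound the number of partitions with more than $k$ parts by a quantity of the form $(\text{number of functions})/k!$ and compare it to the lower bound $B_n\ge l^n/(2\,l!)$ from Observation~\ref{obs:bruce4}, with the $k!/l!$ ratio supplying the decisive factor $e^{-(1-o(1))n\sqrt{\ln n}}$. Your version is just slightly more careful (using $k^n/k!$ per stratum plus unimodality rather than the paper's cruder $n^n/k!$), but the underlying comparison is identical.
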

\begin{proof}
The number of partitions of $N$  which have more than $k$ empty  parts is at most $\frac{n^n}{k!}$.
By Observation \ref{obs:bruce4}, $2B_n \ge( \frac{n}{\ln n})^n\frac{1}{(n/\ln n)!}$. 
 The result follows. 
\end{proof}

\begin{observation}
\label{obs:bruce9}
The proportion  of partitions  of $N$ which have  more than $\frac{n}{\sqrt{\ln \ln n}}$ 
vertices in parts of size greater than $(\ln n)^3$    is  $o(2^{-n(\ln\ln n)^{1/3}})$. 
\end{observation}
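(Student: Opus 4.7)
Set $T=(\ln n)^3$ and $m=\lceil n/\sqrt{\ln\ln n}\rceil$, and for a partition $\pi$ of $V_n$ write $\sigma(\pi)$ for the total number of vertices lying in blocks of $\pi$ of size strictly greater than $T$. The plan is to bound $|\{\pi:\sigma(\pi)\ge m\}|$ by stratifying on $s=\sigma(\pi)$. For each $s$ with $m\le s\le n$, a partition with $\sigma(\pi)=s$ is determined by choosing the $s$-element set of large-block vertices, partitioning it into parts of size $>T$, and partitioning the remaining $n-s$ vertices arbitrarily. Writing $P(s)$ for the number of partitions of $[s]$ into parts of size $>T$, this gives the elementary upper bound
\[
|\{\pi:\sigma(\pi)=s\}|\le\binom{n}{s}\cdot P(s)\cdot B_{n-s}.
\]

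Each factor is then routine to estimate. The elements of $[s]$ being distributed amongst at most $s/T$ parts yields $P(s)\le(s/T)^s\le(n/T)^s$, and $\binom{n}{s}\le(en/s)^s$ is standard. For the ratio $B_{n-s}/B_n$ I would invoke the asymptotic $B_{j+1}/B_j=(1+o(1))j/\ln j$ implicit in de Bruijn's formula recorded in Observation~\ref{obs:bruce4}, and iterate: this produces $B_{n-s}/B_n\le(C\ln n/n)^s$ for some absolute constant $C$ and all $s\le n/2$; the crude bound $B_{n-s}\le B_{\lceil n/2\rceil}$ disposes of $s>n/2$, as the resulting contribution is strictly smaller than the $s=n/2$ term.

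Putting these together, for each $s\in[m,n/2]$,
\[
\frac{|\{\pi:\sigma(\pi)=s\}|}{B_n}\le\left(\frac{eCn}{s(\ln n)^2}\right)^{s}.
\]
Since $s\ge m$ gives $n/s\le\sqrt{\ln\ln n}$, the bracketed quantity is at most $eC\sqrt{\ln\ln n}/(\ln n)^2\le 1/\ln n$ for all sufficiently large $n$. Summing over $s\ge m$ (and absorbing the smaller contribution from $s>n/2$) yields
\[
\frac{|\{\pi:\sigma(\pi)\ge m\}|}{B_n}\le 2n\cdot(\ln n)^{-m}=2n\cdot 2^{-m\log\ln n}.
\]
Because $m\log\ln n=\Theta(n\sqrt{\ln\ln n})$ and $\sqrt{\ln\ln n}$ dominates $(\ln\ln n)^{1/3}$, this is $o(2^{-n(\ln\ln n)^{1/3}})$, as required.

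The hard step is the third one: the $2^{(1+o(1))n}$ factor in de Bruijn's formula is not on its own precise enough to control $B_{n-s}/B_n$ uniformly in $s$, so to extract the constant $C$ rigorously I would use the sharper saddle-point asymptotic $B_{j+1}/B_j\sim j/W(j)$ (where $W$ is the Lambert $W$-function) before iterating. The remaining computations are standard estimates.
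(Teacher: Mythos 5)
Your proposal is correct and follows essentially the same route as the paper: both decompose a partition into (i) the choice of which vertices lie in large parts, (ii) the partition of those vertices into parts of size exceeding $(\ln n)^3$, and (iii) a partition of the remaining vertices counted by a Bell number, then compare against $B_n$ via de Bruijn's asymptotic. You are a bit more careful than the paper in bounding the ratio $B_{n-s}/B_n$ (the paper simply plugs de Bruijn's formula in and declares "the result follows"), and you correctly flag that the $2^{(1+o(1))n}$ error term in the crude form of the asymptotic needs to be tightened via the saddle-point estimate to control that ratio uniformly in $s$; this is a genuine (if minor) gap in the paper's own proof that your version addresses.
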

\begin{proof}
We can specify the partition of the elements of  $N$ into those in parts of size  exceeding  $(\ln n)^3$ 
and those in smaller parts in $2^n$ ways. Letting $n_1$ be the number of elements in larger parts,
and $k$ be the number of larger parts,  we can specify the larger parts in at most 
 in $k^{n_1}/k!$ ways. We note $k<\frac{n_1}{(\ln n)^3}$. 
We can specify the remaining parts in $B_{n-n_1}$ ways. 
By Observation \ref{obs:bruce5}, $B_j = 2^{(1+o(1))j} (\frac{j}{\ln j})^j \frac{1}{(j/\ln j)!}$. 
 The result follows. 
\end{proof}

This gives bounds on the asymptotics of the functions $f^*_i(n)$; however, we will also need a local bound on their growth.

\begin{observation}
\label{obs:thesizeofF_i}
For $n\ge8$ and $i=1,2,3$, we have $\frac{nf^*_i(n)}{16\log n}\le f^*_i(n+1)\le 4nf^*_i(n)$. 
\end{observation}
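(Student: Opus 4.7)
The plan is to prove both bounds uniformly in $i$ by reducing to a statement about weighted set partitions. For each $i\in\{1,2,3\}$ and $s\ge 1$, let $c_i(s)$ denote the number of labeled graphs on an $s$-element vertex set that can appear as a single connected component of $\overline G$ for some $G\in\mathcal F^*_i$. A direct inventory gives $c_1(1)=c_1(2)=1$, $c_1(3)=4$, and $c_1(s)=s$ for $s\ge 4$; $c_2(1)=c_2(2)=1$ and $c_2(s)=s+1$ for $s\ge 3$; and $c_3(1)=c_3(2)=1$ and $c_3(s)=2^s-s-1$ for $s\ge 3$. Since a graph in $\mathcal F^*_i(n)$ is determined by the component partition $P\in\Pi_n$ of its complement together with an independent choice of one of $c_i(|B|)$ component graphs on each block $B$, we have $f^*_i(n)=\sum_{P\in\Pi_n}w_i(P)$ with $w_i(P):=\prod_{B\in P}c_i(|B|)$. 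The standard bijection $\Pi_{n+1}\leftrightarrow\{(P',X):P'\in\Pi_n,\ X\in P'\cup\{\star\}\}$ (where $\star$ marks the case that $\{n+1\}$ is a singleton block) then yields
\[ f^*_i(n+1)\;=\;\sum_{P'\in\Pi_n}w_i(P')\Bigl(c_i(1)+\sum_{B\in P'}\frac{c_i(|B|+1)}{c_i(|B|)}\Bigr).\qquad(\ast) \]

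For the upper bound, a short case check shows $c_i(s+1)/c_i(s)\le 2s$ for every $s\ge 1$ and every $i$ (the only tight case is $s=2$, where the ratio equals $4$). Hence the parenthesised factor in $(\ast)$ is at most $c_i(1)+2\sum_{B\in P'}|B|\le 1+2n\le 4n$ for $n\ge 1$, which gives $f^*_i(n+1)\le 4n\cdot f^*_i(n)$.

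For the lower bound, $c_i$ is nondecreasing in each case, so every ratio $c_i(|B|+1)/c_i(|B|)\ge 1$ and the factor in $(\ast)$ is at least $|P'|$. It therefore suffices to prove that the $w_i$-weighted mean of the block count $|P'|$ on $\Pi_n$ is at least $n/(16\log n)$. I would split $\Pi_n$ into \emph{good} partitions having at least $l/\alpha=n/(8\log n)$ nonsingleton blocks (taking $\alpha=8/\ln 2$ and $l=\lceil n/\ln n\rceil$) and the remaining \emph{bad} partitions. Observation~\ref{obs:bruce4} bounds the bad count by $B_n(\alpha/4)^{-n}$, and the easily verified estimate $c_i(s)\le 2^s$ (holding for all three families) gives $w_i(P)\le 2^n$, so the bad partitions carry total $w_i$-weight at most $2^n B_n(\alpha/4)^{-n}=B_n(\ln 2)^n\le B_n/2$ for $n\ge 2$. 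Since $f^*_i(n)\ge B_n$ by Lemma~\ref{finsize}, good partitions carry at least half the total weight, and each contributes $|P'|\ge n/(8\log n)$; therefore $\sum_{P'}|P'|w_i(P')\ge (n/(8\log n))(f^*_i(n)/2)=n\,f^*_i(n)/(16\log n)$, completing the proof.

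The main obstacle is this last step: the weights $w_i(P)$ vary by a factor of up to $2^n$ across $\Pi_n$ and could a priori concentrate on partitions with few blocks (for example a single-block partition carries $w_3$-weight $\approx 2^n$ in family~3). The plan handles this by exploiting that $B_n$ grows super-exponentially and therefore dominates the $2^n$ worst-case weight by an enormous margin, making the few-nonsingleton-block partitions negligible in $f^*_i(n)$ in spite of their individually large weights.
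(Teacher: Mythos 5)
Your proof is correct and takes a genuinely different route from the paper. The paper proceeds by explicit maps: for the lower bound it injects pairs (graph, anticomponent) into $\mathcal F^*_i(n+1)$ by enlarging the chosen anticomponent in a family-specific way and then applies Observation~\ref{obs:bruce4} with $\alpha=8$ to argue that most graphs have many anticomponents; for the upper bound it deletes a vertex from the anticomponent of $\overline G$ containing vertex $1$ and bounds the preimage size by $3(n+1)$. You instead package the one-step combinatorics into the exact recursion $(\ast)$, isolating the $n$-dependence in the component-count sequences $c_i$, and the two bounds then follow from the elementary ratio facts $1\le c_i(s+1)/c_i(s)\le 2s$; the resulting upper bound $f^*_i(n+1)\le(2n+1)f^*_i(n)$ is even slightly sharper than the paper's $3(n+1)$. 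For the lower bound you apply Observation~\ref{obs:bruce4} with $\alpha=8/\ln 2$ together with the uniform weight estimate $w_i(P)\le 2^n$ (from $c_i(s)\le 2^s$). That choice is a genuine tightening of the corresponding step: with $\alpha=8$ the observation only gives at most $B_n\cdot 2^{-n}$ partitions with fewer than $l/8$ nonsingleton parts, hence at most $B_n$ (partition, subset) pairs, and since Lemma~\ref{finsize} only guarantees $f^*_i(n)\ge B_n$ this does not by itself establish the paper's assertion that more than half the graphs have at least $l/8$ anticomponents; your larger $\alpha$ together with the explicit $2^n$ weight bound closes that gap cleanly. Your inventory of the $c_i(s)$ values and the case check $c_i(s+1)/c_i(s)\le 2s$ (tight only at $s=2$) are all correct; the one cosmetic slip is writing $l/\alpha=n/(8\log n)$ where $l/\alpha\ge n/(8\log n)$ is meant, but you only use it as a lower bound so no harm is done.
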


\begin{proof}
Since every element of $\mathcal F^*_i(n)$ is determined by a partition and a subset of $[n]$, it 
follows  from an application of Observation \ref{obs:bruce4} with $\alpha=8$, that more than half the graphs
in ${\mathcal F}^*_i(n)$ have at least $\frac{l}{8}$ components.

Given a  component $K$ of a graph in ${\mathcal F}^*_i(n)$, we can obtain a graph in ${\mathcal F}^*_i(n+1)$
in which $v_{n+1}$ is not universal  by enlarging $K$ as follows.   
For $i=3$, we add $v_{n+1}$ to $K$ as a universal vertex. 
For $i=2$, if $K$ is a clique we add $v_{n+1}$ as a universal vertex; otherwise we make it adjacent only to the center of the star $K$. 
For $i=1$, if $K$ is a star we add $v_{n+1}$ as a leaf; otherwise, we add $v_{n+1}$ as the center of a star whose 3 leaves are the vertices of $K$.  
Each graph  in $\mathcal F^*_i(n+1)$ in which $v_{n+1}$ is not universal  arises 
from a unique choice of  a graph  $G$ in ${\mathcal F}^*_i(n)$ and a component $K$ 
of $\overline{G}$.  There are of course $f^*_i(n)$ graphs in ${\mathcal F}^*_i(n+1)$ for which  
$v_{n+1}$ is universal. 
Hence, by the discussion in the previous paragraph, we have
$f^*_i(n+1)>(1/2)f^*_i(n)\cdot (l/8)\ge f^*_i(n)\cdot n/16\log n$.

Finally, for the upper bound, we show that $f^*_i(n+1)\le 3(n+1)f^*_i(n)$.
We construct a mapping from $\mathcal F^*_i(n+1)$ to $\mathcal F^*_i(n)$ as follows.  
Given a graph $G$ in $\mathcal F^*_i(n+1)$, let $K$ be the component of $\overline G$ that contains $1$.  
If $K$ is just the vertex 1 then delete 1.  If $K$ is a clique, or a join of a clique and a stable set, 
then delete the vertex of $K$ with the largest label.  If $K$ is a star then delete the leaf
with the largest label.  Finally, order the  vertices so their labels are increasing and relabel  with labels $1,\dots,n$  so this order remains the same.   We note that to reconstruct the original graph from the image if we know the label of the vertex that was deleted, we simply need to determine the  adjacency of the deleted vertex.
If this label was 1 then $v_1$ is universal and we are done. Otherwise, 
when $i=3$, this is determined by specifiying whether the deleted vertex came from the  clique or the stable set, 2 choices. 
Otherwise if the component contained at least four vertices before deletion we knew whether it was a star or a clique, and there is a unique 
choice for the neighbourhood of the deleted vertex. If the component had three vertices there are 3 possible neighbourhoods for the deleted vertex (it must see at least one of the 
other vertices). 
  Thus $f^*_i(n+1)\le 3(n+1)f^*_i(n)$.
\end{proof}

\section{A First Near Witnessing Partition}
\label{prelim.sec}

The first step in our proof is to show that almost every $H$-free graph permits an $H$-freeness witnessing partition of all but $o(n)$ 
of its vertices. A  beautiful paper of Alon, Balogh, Bollobas, and Morris \cite{ABBM}  allowed us to simplify this part of the  proof significantly. 

Their discussion is in terms of partitions such that no part contain a specific bipartite graph $U(k)$, defined as follows: for
$k\ge1$, the graph $U(k)$ has bipartition $(A,B)$ with $|A|=k$ and $|B|=2^k$, and for every subset $S$ of $A$ there is
a vertex $b\in B$ with $\Gamma(b)=S$.  Abusing notation slightly, we shall say that a graph $G$ is {\em $U(k)$-free} if
there is no pair $S_T$ of disjoint sets of vertices of $G$ such that the bipartite subgraph of $G$ between $S$ and $T$ is a copy of $U(k)$ (in other words, if we ignore the edges inside $S$ and $T$ we get an induced copy of $U(k)$).
As Alon et al.   prove using two applications of the duality of VC-dimension, for any 
restricted family ${\cal F}$ there is a $k$ such that no graph in ${\cal F}$ contains $U(k)$ as a subgraph
(this is their Lemma 7 for $r=1$). 

So, their Corollary 8, which bounds the number of graphs of size $l$ which are $U(k)$-free     immediately yields the following strengthening. 

\begin{corollary}
\label{restrictbound.cor}
For every restricted hereditary family   ${\cal F}$  there is a positive  $\epsilon$ such that for every sufficiently large $l$, the number of graphs in ${\cal F}$ with $l$ vertices 
is less than $2^{l^{2-\epsilon}}$. 
\end{corollary}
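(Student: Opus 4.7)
The plan is simply to chain the two results of Alon, Balogh, Bollob\'as and Morris \cite{ABBM} that the paragraph preceding the corollary has already quoted. First I would apply their Lemma~7 in the case $r=1$: for any restricted hereditary family ${\cal F}$ there exists an integer $k=k({\cal F})$ such that every $G\in{\cal F}$ is $U(k)$-free in the sense defined just before the corollary, i.e.\ for no disjoint $S,T\subseteq V(G)$ is the bipartite subgraph of $G$ between $S$ and $T$ isomorphic to $U(k)$.

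Second, I would invoke their Corollary~8, which bounds the number of $U(k)$-free graphs on $l$ labelled vertices: for any fixed $k$ there exists $\epsilon=\epsilon(k)>0$ such that for all sufficiently large $l$ there are at most $2^{l^{2-\epsilon}}$ such graphs on $\{1,\dots,l\}$. Since every $l$-vertex graph in ${\cal F}$ is $U(k)$-free by the first step, the number of $l$-vertex graphs in ${\cal F}$ is at most the number of $U(k)$-free graphs on $l$ labelled vertices, hence at most $2^{l^{2-\epsilon}}$ with the same $\epsilon=\epsilon(k)$; this is exactly the stated bound.

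There is no real obstacle: the corollary is a direct packaging of the two cited results of ABBM. The only thing that formally requires checking is that ``$U(k)$-free'' in ABBM's Corollary~8 uses the same bipartite-between-two-disjoint-sets definition the excerpt adopts, so that the conclusion of Lemma~7 supplies the hypothesis of Corollary~8; this is a matter of matching statements and not of any calculation. The constant $\epsilon$ for the family ${\cal F}$ is then simply $\epsilon(k({\cal F}))$.
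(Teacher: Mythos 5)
Your proposal is correct and matches the paper's own (very brief) justification exactly: the paper derives the corollary by combining ABBM's Lemma~7 with $r=1$ (every restricted hereditary family is $U(k)$-free for some $k$) and ABBM's Corollary~8 (the count of $U(k)$-free graphs on $l$ vertices is at most $2^{l^{2-\epsilon}}$). No further comment is needed.
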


The following is  essentially an immediate corollary of their Theorem 1.

\begin{corollary}
\label{abbm.cor}
For every $H$ with at least two vertices,  every sufficiently large $k$ and every $\delta>0$, there are  positive  $\epsilon$ and $b$   such that the following holds:

For almost every $H$-free graph   $G$ on $V_n$  there is a  set $B$ of at most $b$ vertices and a partition of $V_n$ into  $S_1,\dots,S_{\wpn(H)}, A_1,\dots,A_{\wpn(H)}$ and a set $B$ such that:
\begin{enumerate}
\item[(a)] $G[S_i]$ is $U(k)$-free for each $i$; 
\item[(b)] $|A_1 \cup\dots \cup A_{\wpn(H)} | \le n^{1-\epsilon}$; 
\item[(c)] for every vertex $v\in S_i \cup A_i$ there is a vertex $a\in B$ such that 
$$|(N(v)\triangle N(a))\cap (S_i\cup A_i)|\le \delta n;$$
\item[(d)] for every $i$, we have 
$$\left| |S_i \cup A_i|-\frac{n}{\wpn(H)}\right| \le \delta n.$$ 
\end{enumerate} 
\end{corollary}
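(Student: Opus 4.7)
The plan is to apply Theorem 1 of Alon, Balogh, Bollob\'as and Morris \cite{ABBM} as a black box and then handle some bookkeeping. Their Theorem 1 states, roughly, that for any hereditary property $\mathcal{P}$ with ``coloring number'' $r$, almost every $G\in\mathcal{P}$ on $n$ vertices admits a partition of $V(G)$ into $r$ classes, each of which is $U(k)$-free after the removal of a sub-polynomial exceptional set, and such that the neighborhood structure of almost every vertex is captured (up to $\delta n$ symmetric difference on each class) by one of a bounded collection of prototype vertices. So the first step is to verify that the coloring number of $\Forb_H$ in their sense coincides with $\wpn(H)$: this is immediate from the definition of $\wpn(H)$ together with their (Ramsey-theoretic) identification of this parameter as the maximum number of parts in which each part may be taken from a restricted hereditary family.

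Once this identification is made, applying their theorem directly produces the partition $S_1,\dots,S_{\wpn(H)}$ satisfying (a), together with a single exceptional set $E$ of size $O(n^{1-\epsilon})$ and a bounded set of prototypes. I would then obtain $A_1,\dots,A_{\wpn(H)}$ by assigning each exceptional vertex $v \in E$ to the class $i$ for which the prototype closest to $v$ lives in $S_i$ (breaking ties arbitrarily), which delivers (b). For (c), the set $B$ is taken to be the union of the bounded set of prototypes together with, for every vertex $v$ that fails to have a close prototype in its class, the vertex $v$ itself; a standard entropy/counting argument shows that the total number of such ``bad'' $v$ in a typical $H$-free graph is bounded independently of $n$ (any larger number of pairwise $\delta n/3$-distinct neighborhood types in one class would force more graphs than $|\Forb_H^n|$ can accommodate), so $|B|$ remains bounded by some $b=b(k,\delta,H)$.

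For (d) I would invoke the Pr\"omel--Steger style calculation already sketched in the paper. The lower bound $|\Forb_H^n|\ge 2^{(1-1/\wpn(H))\binom{n}{2}}$ is matched by an upper bound on the number of graphs arising from any single partition: by Corollary \ref{restrictbound.cor} applied to the $U(k)$-free hereditary family, the number of choices for each $G[S_i\cup A_i]$ is $2^{o(n^2)}$, and the number of edges between distinct classes is exactly $(1-\tfrac{1}{\wpn(H)})\binom{n}{2} - 2d$, where $d=\sum_i(|S_i\cup A_i|-n/\wpn(H))^2$. Hence any class of partitions with some $\bigl||S_i\cup A_i|-n/\wpn(H)\bigr|>\delta n$ contributes only $2^{(1-1/\wpn(H))\binom{n}{2}-\Omega(\delta^2 n^2)+o(n^2)}$ graphs, which is a vanishing fraction of $|\Forb_H^n|$ even after summing over the $2^{o(n^2)}$ choices of partition and prototype assignment.

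The main obstacle will be condition (c) in its universally quantified form: ABBM's theorem naturally gives such a bound for almost every vertex, not for every vertex, so the argument above for absorbing the rare badly-approximated vertices into $B$ must be done carefully to keep $|B|\le b$. A reasonable fallback, should the entropy argument prove awkward, is to absorb these vertices into the sets $A_i$ instead; since there are at most $O(n^{1-\epsilon'})$ of them for some $\epsilon'>0$ (again by a counting argument comparing to $|\Forb_H^n|$), this preserves (b) after shrinking $\epsilon$ slightly, and the resulting $B$ can then be taken to be the bounded prototype set from ABBM.
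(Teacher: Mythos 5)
Your main route is essentially the paper's: invoke Alon--Balogh--Bollob\'as--Morris, observe that their colouring number equals $\wpn(H)$, and then bound the number of pairwise $\delta n$-distinct neighbourhood types per class by a counting argument to get $|B|\le b$. The paper implements this not by treating ABBM's Theorem~1 as a black box but by opening up their proof: it strengthens their Lemma~23 (which in the published form only bounds the maximal bad set by $n^{1-\alpha}$) to a constant $c=c(\alpha,\mathcal P)$, and then reads (a)--(d) directly off the adjustment machinery. Your ``entropy/counting'' argument to bound the bad set is exactly the content of that strengthening, so the main line is sound, though you should be aware that it requires re-examining the interior of ABBM's argument rather than only citing the theorem statement.

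Your fallback, however, does not work. Condition (c) is a universally quantified statement over $v\in S_i\cup A_i$; it is not a statement only about $S_i$. So moving a vertex with no close prototype out of $S_i$ and into $A_i$ does not remove the obligation --- that vertex still needs an $a\in B$ with $|(N(v)\triangle N(a))\cap(S_i\cup A_i)|\le\delta n$, and by hypothesis it has none. The only parts of the partition that escape condition (c) are $B$ itself, so any vertex without a close prototype must go into $B$, which is precisely why $|B|$ must be shown bounded rather than merely $o(n)$. Relatedly, your framing that ABBM ``naturally gives such a bound for almost every vertex'' is slightly off: taking $B$ to be a \emph{maximal} set of pairwise $2\alpha$-far vertices automatically covers every vertex; the issue is solely that ABBM's published bound on $|B|$ is $n^{1-\alpha}$ rather than a constant, and closing that gap is the one genuine piece of work in the corollary.

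Your treatment of (d) via the Pr\"omel--Steger quadratic-loss calculation is a valid alternative to the paper's, which instead derives (d) directly from ABBM's initial choice of near-balanced partition and the fact that an $\alpha$-adjustment moves few vertices.
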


\begin{proof}
We only need to prove this result for $\delta$ sufficiently small as it then follows for all $\delta$.
We can essentially read this out of the proof of Theorem 1 in 
Alon, Balogh, Bollobas, and Morris \cite{ABBM}, letting the hereditary family ${\cal P}$ be $\Forb_H$ 
and $\alpha={\delta}/{3}$ after choosing $\delta$ small enough that $\alpha$ is as small as required by their proof. We omit the details, simply sketching the very. very minor modifications. 

 We note that their $\chi_c({\cal P})$ is exactly $\wpn(H)$. We want  to use the  strengthening  of their Lemma 23
obtained by (i) replacing $\alpha=\alpha(k,{\cal P})>0$ in its statement with $\alpha>0$ sufficiently small in terms of $k$ and ${\cal P}$, and (ii) replacing {\it with} in the definition of $U({\cal P}_n,\alpha,k)$ just before lemma 23 by {\it with $|B|>c(\alpha,{\cal P})$ for the $c$ of Lemma 19 or}.  Their proof of the lemma actually proves this strengthening provided that in the definition of $U_n$ given on its fourth line we replace $n^{1-\alpha}$ by $c$.  

Now while  following their (three paragraph) proof of their Theorem 1, we again replace $\alpha=\alpha(k, {\cal P})$ by $\alpha>0$ sufficiently small in terms of $k$ and ${\cal P}$. We also add
$and |B| \le c$ at the end of the second paragraph before {\it for almost all}.  

Then we consider the adjustment $S^\prime_1,\dots,S^\prime_r$ and exceptional set $A$ they obtain  and set  $A_i=S^\prime_i \cap A, S_i=S_i^\prime-A$. Now, as in their proof,  consider a maximal $2\alpha$ bad set 
$B$. By our strengthened version of their Lemma 23,  the size of $B$ is at most some constant $c$. We set $b$ to be this $c$. 
Now, (a) is their Theorem 1(b); (b) is their Theorem 1(a) where $\epsilon =\frac{\alpha}{2}$; (c) follows immediately from the fact that 
$S^\prime_1,\dots,S^\prime_{\wpn(H)}$ is an $\alpha$-adjustment and the definition of an adjustment;
and (d) follows from their initial choice of partition (at the beginning of their proof) and the definition of an adjustment.
\end{proof}

We will need a strengthening of this theorem for $H$-free graphs.  In our strengthening we use 
$X_i$ in place  of $S_i$ and $Z_i$ in place of $A_i$ to avoid confusion. We will need a sharper bound than above on the
size of each set in the partition; and we will want to insist that, for some really canonical
$H$-free witnessing $\wpn(H)$-sequence $\mathcal F_1,\dots,\mathcal F_{\wpn(H)}$, $G[X_i]$ is contained in  ${\cal F}_i$ for each $i$.

\begin{theorem}
\label{Hstruc.thm}
For every $H$ with $wpn(H) \ge 2 $,  and  $\delta>0$ sufficiently small in terms of $H$, there are  $\gamma>0$
and an integer $b$ such that the following holds:

For almost every $H$-free graph   $G$ on $V_n$  there is a partition of $V_n$ into  $X_1,\dots,X_{\wpn(H)}, Z_1,\dots,Z_{\wpn(H)}$ such that for some set $B$ of at most $b$ vertices    each $G[X_i]$ is $U_k$-free and the following hold:

\begin{enumerate}
\item[(I)] There is a really canonical $H$-freeness witnessing $\wpn(H)$-sequence
\goodbreak $\mathcal F_1,\dots,\mathcal F_{\wpn(H)}$
certifying the partition $X_1,\dots,X_{\wpn(H)}$ of $G[X_1 \cup\dots\cup X_{\wpn(H)}]$;
\item[(II)] $|Z_1 \cup Z_2 \cup\dots \cup Z_{\wpn(H)} | \le n^{1-\gamma}$;
\item[(III)] for every vertex $v$ of $X_i \cup Z_i$ there is a vertex $a$  of $B$ such that 
$$
|(N(v)\triangle N(a)) \cap (X_i \cup Z_i)|<\delta n;
$$
\item[(IV)] for every $i$, we have
$$\left| |Z_i \cup X_i|-\frac{n}{\wpn(H)}\right| \le n^{1-\frac{\gamma}{4}};$$ 
\end{enumerate}
\end{theorem}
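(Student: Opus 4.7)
The plan is to begin with the partition provided by Corollary~\ref{abbm.cor} and refine it in two steps: first upgrading the $U(k)$-freeness of each $G[S_i]$ to membership in a really canonical family (establishing (I)--(III)), then sharpening the balance via a counting argument (establishing (IV)).

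Apply Corollary~\ref{abbm.cor} with $\delta_0=\delta/2$ and $k$ sufficiently large, obtaining, for almost every $H$-free $G$, a partition $(S_1,\dots,S_{\wpn(H)},A_1,\dots,A_{\wpn(H)})$ and exceptional set $B$ with properties (a)--(d). Fix a threshold $T=n^{1-\gamma_0}$ for a small $\gamma_0>0$, let
$$J_i=\bigl\{L:L\text{ is an induced subgraph of }H\text{ and }G[S_i]\text{ has fewer than }T\text{ induced copies of }L\bigr\},$$
and let $\mathcal F_i$ be the hereditary family of graphs forbidding every $L\in J_i$. Move from each $S_i$ into $A_i$ every vertex lying in an induced copy of some $L\in J_i$; this moves at most $|J_i|\cdot|V(H)|\cdot T=O(n^{1-\gamma_0})$ vertices per part. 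Setting $X_i$ to be the remaining vertices of $S_i$ and $Z_i$ the enlarged $A_i$, we have $G[X_i]\in\mathcal F_i$, $G[X_i]$ remains $U(k)$-free, and (II) and (III) hold for any $\gamma\le\min(\gamma_0,\epsilon)$.

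To see that $(\mathcal F_1,\dots,\mathcal F_{\wpn(H)})$ is a really canonical witnessing sequence, first note that $|S_i|\ge(1-o(1))n/\wpn(H)$, so by Ramsey $G[S_i]$ contains either a clique or a stable set of size $|V(H)|+\log T$; this supplies $\binom{|V(H)|+\log T}{r}\gg T$ induced copies of $K_r$ or of $E_r$ for every $r\le|V(H)|$, so $J_i$ omits either all cliques or all stable sets and $\mathcal F_i$ is really canonical. For the witnessing property: if some partition $V(H)=Y_1\cup\dots\cup Y_{\wpn(H)}$ had $H[Y_i]\notin J_i$ for every $i$, each $G[S_i]$ would contain at least $T$ induced copies of $H[Y_i]$; classifying vertices of each $S_i$ into the $\le 2^b$ neighbourhood-types determined by $B$ via property (c), a standard supersaturation/embedding argument then extracts an induced copy of $H$ in $G$ whose pieces in the different $S_i$'s have cross-part adjacencies matching those of $H$ between the $Y_i$'s, contradicting $H$-freeness.

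Finally, for (IV): by the structural claim in Section~1, each $J_i$ in a really canonical sequence contains a bipartite graph, a co-bipartite graph and a split graph, so $\mathcal F_i$ is restricted and by Corollary~\ref{restrictbound.cor} satisfies $|\mathcal F_i^l|\le 2^{l^{2-\epsilon'}}$ for some $\epsilon'>0$. Writing $|X_i\cup Z_i|=n/\wpn(H)+a_i$ with $\sum_i a_i=0$, the number of $H$-free graphs admitting the constructed partition of shape $(a_i)$ is at most
$$
2^{(1-1/\wpn(H))\binom{n}{2}-\tfrac12\sum_i a_i^2+O(n^{2-\min(\epsilon',\gamma)})}.
$$
Summing over the finitely many really canonical sequences, the $n^{O(1)}$ shapes and the $\wpn(H)^n$ underlying assignments, the $H$-free graphs with $\sum_i a_i^2>n^{2-\gamma/2}$ form a vanishing fraction of the Pr\"omel--Steger lower bound $|\Forb_H^n|\ge 2^{(1-1/\wpn(H))\binom{n}{2}}$ provided $\gamma<2\epsilon'$. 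Hence $|a_i|\le\sqrt{\sum_j a_j^2}\le n^{1-\gamma/4}$ for almost every $G$, giving~(IV). The main obstacle is the embedding step: coordinating the neighbourhood-type alignments across all $\wpn(H)$ parts simultaneously so that the induced copies of the $H[Y_i]$ picked in the $S_i$'s fit together into a single induced copy of $H$ in $G$, with $T$, $\gamma_0$ and $k$ tuned jointly so that both this embedding argument and the final counting survive.
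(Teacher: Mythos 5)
Your skeleton matches the paper's (start from Corollary~\ref{abbm.cor}, prune each $S_i$ to force membership in a canonical hereditary family, then a counting argument for balance), but the central step --- establishing that the resulting sequence certifies the partition for almost every $G$ --- is where your argument breaks, and you have essentially deferred it rather than solved it. First, the claim that one can ``extract an induced copy of $H$ in $G$ ... contradicting $H$-freeness'' from the mere existence of $T$ copies of each $H[Y_i]$ in each part is false as a deterministic statement about a single graph: property (c) only controls neighbourhoods \emph{within} $S_i\cup A_i$ and says nothing about cross-part adjacencies, so no alignment of neighbourhood-types can force the pieces to fit together. The correct argument (the one the paper gives) is a counting argument over the choice $C_2$ of cross edges: for a fixed partition and fixed $G[S_i\cup A_i]$'s, one finds $\Omega(n^{2-\epsilon/2})$ edge-disjoint cliques in the auxiliary complete multipartite graph on the chosen copies, each clique giving an independent forbidden assignment of cross edges, so only a $2^{-\Omega(n^{2-\epsilon/2})}$ fraction of completions are $H$-free. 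Second, even that counting argument requires many \emph{vertex-disjoint} copies of $H[Y_i]$ in each part; your threshold of $T=n^{1-\gamma_0}$ \emph{total} (possibly heavily overlapping) copies does not yield more than a bounded number of disjoint ones (e.g.\ $T$ edges all through one vertex), and without disjointness the forbidden events are not independent. The paper avoids this by defining $X_i$ through iterated extraction of maximal families of vertex-disjoint copies, so that every surviving subgraph $L\notin J_i$ is guaranteed $n^{1-\epsilon/4}$ disjoint copies.

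There are two further errors. Your verification that the sequence is really canonical is quantitatively wrong: Ramsey gives a clique or stable set in $G[S_i]$ of size only $\Theta(\log n)$, not $|V(H)|+\log T$, and in any case $\binom{|V(H)|+\log T}{r}=\Theta((\log n)^r)\ll T=n^{1-\gamma_0}$, so you cannot conclude that $J_i$ omits all cliques or all stable sets under your count-based definition of $J_i$. (With the paper's appearance-based $J_i$, a single clique of size $|V(H)|$ in the large set $G[X_i]$ already excludes every clique from $J_i$, so really-canonicity follows from Ramsey there.) Finally, in your argument for (IV) you bound the number of choices for $G[X_i]$ by invoking restrictedness of $\mathcal F_i$ via Claim~1; but Claim~1 applies only to \emph{witnessing} sequences, and whether your sequence is witnessing is exactly what is in doubt for the graphs you are counting. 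The paper sidesteps this by applying Corollary~\ref{restrictbound.cor} to the $U(k)$-free graphs $G[S_i]$ directly, a bound you also have available and should use instead.
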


We remark that if $wpn(H)=1$ then $H$ contains no cycle, and since $wpn(H)=wpn(\overline{H})$ neither does its complement. I.e.  $H$ is in $P_4,P_3,\overline{P_3},E_2,P_2,P_1$. The structure of $H$-free graphs for these
$H$ are well understood. 

\begin{proof}
We choose $k$ sufficiently large and then $\delta<\frac{1}{10\wpn(H)}$ sufficiently small in terms of $H$ and $k$. 
We note that if either Corollary \ref{restrictbound.cor} or   Corollary \ref{abbm.cor} holds for a specific choice of $\epsilon$ it also holds for all smaller $\epsilon$.
So we can choose $\epsilon,b>0$ such that  both these  corollaries  hold for these choices of $k,\delta,b$, and $\epsilon$. 
 Wec set $\gamma= {\epsilon}/{10}$ and 
consider $n$ large enough to satisfy certain implicit inequalities below. We  know that for almost every graph in $\Forb_H^n$ there are sets $B$, $S_i$ and $A_i$ satisfying Corollary  \ref{abbm.cor} above.
This partition satisfies  (a), (b), (c): we now show that 
for almost every graph in 
$\Forb_H^n$ with such a partition, we can obtain a partition satisfying (I),(II),(III) and (IV).  

We first show that we may assume that $\left| |S_i \cup A_i|- \frac{n}{\wpn(H)} \right|$ is at most $n^{1-\gamma}$. 
There are $2^{O(n)}$ choices for the partition.
Since  $S_i$ is $U(k)$-free and $n$ is large, Corollary \ref{restrictbound.cor} tells us that there  are at most $2^{n^{2-\epsilon}}$ choices for $G[S_i]$. 
The number of  choices for the edges incident with $\cup_i A_i$ is at most $2^{n|\cup_i A_i|}\le 2^{n^{2-\epsilon}}$. It follows that there are 
at most $2^{O(n^{2-\epsilon})}$ choices 
for the partition $S_1,\dots,S_{\wpn(H)},A_1,\dots,A_{\wpn(H)}$ and the graphs $G[S_1 \cup A_1],\dots,G[S_{\wpn(H)} \cup A_{\wpn(H)}]$. 
Define $a_i$ by $|S_i\cup A_i|=n/\wpn(H)+a_i$, so $\sum_{i=1}^{\wpn(H)}a_i=0$.  Then the number of edges not yet determined is at most
\begin{align*}
\binom n2-\sum_{i=1}^{\wpn(H)}\binom{|S_i\cup A_i|}{2}
&=\binom n2-\sum_{i=1}^{\wpn(H)}\binom{n/\wpn(H)+a_i}{2}\\
&=\binom n2-\frac{n^2}{2\wpn(H)}-\frac12\sum_{i=1}^{\wpn(H)}a_i^2+O(n).
\end{align*}
If $|a_i|\ge n^{1-\epsilon/4}$ for any $i$ then this is at most
$\binom n2-\frac{n^2}{2\wpn(H)}-\Omega(n^{2-\epsilon/2})$.  Since there are only $2^{O(n^{2-\epsilon})}$ choices for
the partition and the graphs $G[S_i\cup A_i]$, the number of graphs for which there is a partition such that 
$|a_i|\ge n^{1-\epsilon/4}$ for some $i$ is at most 
$2^{\binom n2-\frac{n^2}{2\wpn(H)}-\Omega(n^{2-\epsilon/2})}2^{O(n^{2-\epsilon})}
=2^{\binom n2-\frac{n^2}{2\wpn(H)}-\Omega(n^{2-\epsilon/2})}$.  
Since $\Forb_H^n$ has size at least $2^{(1-1/\wpn(H))\binom n2+O(n)}$, we see that 
only $o(|\Forb_H^n|)$ graphs have a partition as above in which $|a_i|\ge n^{1-\gamma}$ for any $i$.

We define a subset $X_i$ of $S_i$ as follows: starting from $S_i$, if there is  any induced subgraph $L$ of $H$ such that there do not exist $n^{1-\frac{\epsilon}{4}}$ disjoint sets of vertices in the set which induce $L$, we choose a maximal 
family of vertex-disjoint induced copies of $L$ and remove from our set all the vertices in all these copies.  We repeat
until for every induced subgraph $L$ of $H$, either there are no induced copies of $L$ left, or there are 
at least $n^{1-\frac{\epsilon}{4}}$ vertex-disjoint induced copies of $L$.  We then let $X_i$ be the remaining vertices.
We let $Z_i$ be the union of  $A_i$ and 
all the vertices we removed, so $X_i\cup Z_i = S_i\cup A_i$,
and we let $J_i$ be the set of induced subgraphs of $H$ that do not appear as induced subgraphs of $G[X_i]$. 
We note that for some constant  $C_H$ which depends on $H$, we have
$|Z_i|\le C_Hn^{1-\frac{\epsilon}{4}}$. So, for large $n$, (II) holds. 


Since  $S_i \cup A_i=X_i \cup Z_i$, we have  that   (IV) holds; and since (c) holds, so does (III).  
Furthermore, since $G[S_i]$ is $U(K)$-free so is $G[X_i]$. All that remains is to check (I).

We let ${\cal F}_i$ be the family consisting of those graphs containing none of the elements of $J_i$ as an induced subgraph.
Clearly,  each ${\cal F}_i$ is canonical.  Now, since we may assume $n$ is large, $\delta<\frac{1}{10\wpn(H)}$ and (IV)  and (II) hold, 
we see that we can assume that $G[S_i]$ is as large as we like and so each ${\cal F}_i$ is in fact really canonical. 
To complete the theorem we need only show that the proportion of graphs with the property that  $\{{\cal F}_1,\dots,{\cal F}_{\wpn(H)}\}$ does not 
certify $X_1,\dots,X_{\wpn(H)}$ is $o(|\Forb_H^n|)$. In order to do so, we  sum over  all possible choices for:
 (i) the original $S_i$ and $A_i$ satisfying (a),(b),(c); (ii) the  new partition $X_i,Z_i$ of $S_i \cup A_i$ for each $i$; and  (iii)  the 
subgraphs $G[S_i \cup A_i]$ which have the property of interest.

We know already that there are at most $2^{O(n^{2-\epsilon})}$ choices for the partition
and the graphs $G[A_i\cup S_i]$, and there are at most $2^{O(n)}$ choices for the sets $X_i$ and $Z_i$.


We now count, for a specific choice of partition and induced subgraphs $(G[S_1 \cup A_1],\dots,G[S_{\wpn(H)} \cup A_{\wpn(H)}])$, 
 the number of $G$ corresponding to this choice for which  $\{{\cal F}_1,\dots,{\cal F}_{\wpn(H)}\}$ does not 
certify $X_1,\dots,X_{\wpn(H)}$. This implies there is   a partition $Y_1,\dots,Y_{\wpn(H)}$ of $V(H)$ such that 
$H[Y_i]$ is in ${\cal F}_i$ and hence not in $J_i$. Thus, for each $i$, we can find 
$n^{1-\frac{\epsilon}{4}}$ disjoint sets of vertices in $S_i$ which induce $H[Y_i]$. 
We consider the complete $\wpn(H)$-partite graph whose vertices are these sets and where two sets are joined if they are in different $S_i$. 
An easy, standard argument (see, for instance, the proof  of Lemma 5 in \cite{LRSTT}) tells us that  we can find $\frac{(n^{1-\frac{\epsilon}{4}})^2}{100^{wpn(H)}}$ edge disjoint 
cliques in this graph. For each such clique, there is at least one choice  of  edges between the sets corresponding to its vertices  such that if  
$G$ makes this choice than $G$ contains $H$ as an induced subgraph. It follows that for some $\gamma_H$ which depends on $H$,
the number of $G$ corresponding to our choice of partition and  $(G[S_1 \cup A_1],\dots,G[S_{\wpn(H)} \cup A_{\wpn(H)}])$ is at 
most $2^{(1-\frac{1}{r}){n \choose 2}}2^{-\gamma_Hn^{2-\frac{\epsilon}{2}}}$.  
Since there are $2^{O(n^{2-\epsilon})}$ choices for the partitions and subgraphs $G[S_i\cup A_i]$, this 
gives a total of $2^{(1-\frac{1}{r}){n \choose 2}+O(n^{2-\epsilon})-\Omega(n^{2-\epsilon/2})}
=o(|\Forb_H^n|)$ possibilities, and the result follows.
\end{proof}

\section{A Second Near Witnessing Partition}

We want to strengthen Theorem \ref{Hstruc.thm} when $H=C_l$ for some $l \ge 3$   by reducing the size of the exceptional set
to a polynomial in $\log n$. We start the process in this section by obtaining further properties of the original 
partition which hold for such $H$.  We develop the tools to do so, which can be applied more broadly, in the first subsection.
We describe the further properties of the  partition we obtain for the $H$ which interest us in the second. 

\subsection{Atypical Neighbourhoods in Typical  Bad Graphs}

We say that an $H$-free graph is {\it good} if it has a really canonical  $H$-freeness witnessing $wpn(HG)$-partition. 
We say that an $H$-free graph is  {\it bad} if this is not the case. The key to our results is to show that  
for the $H$ we consider that the number of bad $H$-free graphs is $o(1)$  of the number of good  $H$-free graphs.  
We know that there are more than $2^{(1-\frac{1}{wpn(H)}){n \choose 2}}$ good  $H$-free graphs. 
So we can ignore a bounded number of families of bad graphs which have 
$o(2^{(1-\frac{1}{wpn(H)}){n \choose 2}})$ members.

We say that a bad  $H$-free  graph $G$ on $V_n$   is  $(\delta, \gamma,b)$-{\it compatible} 
with $\mathcal F_1,\dots,\mathcal F_{\wpn(H)}$
on  some partition  $\mathcal P=\{Y_1,...,Y_{wpn(H)}\}$  of $V_n$
if for some set $B$ of $b$ vertices  it has a partition  satisfying (I)-(IV)  with $Y_i=X_i \cup Z_i$ for each $i$.
Theorem \ref{Hstruc.thm}  tells us that for every $\delta>0$ sufficiently small in terms of $H$, there are $b$ and $\gamma$ 
such that  the family of  bad graph  not   $(\delta, \gamma,b)$-compatible with  some really canonical $H$-freeness witnessing partition   $\mathcal F_1,\dots,\mathcal F_{\wpn(H)}$ on some partition  of $V_n$  is $o(|{\cal F}^H_n|)$. 
Hence we need only bound the number of   the remaining bad graphs.

In order to bound the number of such bad graphs, we think of exposing an $H$-free  graph $G$ which is  
$( \delta, \gamma, b)$-compatible with some $ \mathcal F_1,\dots,\mathcal F_{\wpn(H)}$ on a specific partition $\cal P$ of $V_n$ and corresponding sets $Z_1,...,Z_{wpn(H)}$ via a two step process:
\begin{itemize}
 \item A choice $C_1$ for  the vertices of $Z_1 \cup \dots \cup Z_{\wpn(H)}$ and the edges inside the classes $Y_i$.
 \item A choice $C_2$ of the edges between distinct classes $Y_i$.
\end{itemize}


We can choose $Z_i$  and  the edges inside $Y_i$ that are incident with it by first choosing $B$ and $Z_i$,  then choosing  the  neighbourhoods of the $b$ vertices of $B$,  next assigning each vertex of $\cup_i Z_i$ to one of these $b$ vertices and  finally specifying the at most $\delta n$ vertices in the symmetric difference  of the neighbourhoods of these two vertices.  So the number of such choices is: 

\begin{equation}\label{upperv}
n^b\binom{n}{n^{1-\gamma}}2^{nb}b^{n^{1-\gamma}}\binom{n}{\delta n}^{n^{1-\gamma}}=o( 2^{\sqrt{\delta}n^{2-\gamma}}),
\end{equation}
for $\delta$ sufficiently small (note that $\binom{n}{\delta n}<2^{\sqrt \delta n/2}$ for small $\delta$).
Similarly, for sufficiently small $\delta$, the number of choices for edges inside $X_i$ that are incident with a given set $S\subset X_i$ is at most
\begin{equation}\label{yinpre}
n^b2^{nb}b^{|S|}\binom{n}{\delta n}^{|S|}=o(2^{(b+1)n}2^{\sqrt\delta n|S|}).
\end{equation}

For each such $G$ and ${\cal P}$,  for each  for $1 \le i \le wpn(H)$  we define  if possible an (ordered)   set  $M_i$ of $\lceil \frac{n}{8wpn(H)} \rceil$ disjoint edges 
of $G[X_i]$, and if possible   an ordered set  $N_i$ of $\lceil \frac{n}{8wpn(H)} \rceil$  stable sets of size three in $G[X_i]$. 

To make our choice unique for each $i$ with $1 \le i \le wpn(H)$  where the desired object exists we  grow a  matching (respectively set of triples)  in $G[X_i]$  which is initially empty by,   at each step,   adding  into the current  (possibly empty) matching edge (respectively triple)   the lexicographically smallest vertex not yet used such that after adding it, we can still extend the current choice to the desired matching  (respectively set of triples)  of size $\lceil \frac{n}{8wpn(H} \rceil$.   Since deleting the vertices of a maximum matching yields a stable set we see that for every $i$, at least one of $M_i$ or $N_i$ exists.   


We define the collection ${\cal A}={\cal A}(G,{\cal P})$ with respect to such a graph $G$ on $V_n$ which is  
$( \delta, \gamma, b)$-compatible with  some  $\mathcal F_1,\dots,\mathcal F_{\wpn(H)}$ on a specific 
${\cal P}$ as follows. 
We say that a subset $S$ of $V_n$ is {\em atypical} if
$|S| \le 6$ and  one of the following holds: 
\begin{enumerate} \item[(i)] for  some $Y_i$ disjoint from $S$ for which $M_i$ existe and some pair of subsets $D_1$ and $D_2$ of $S$, 
the number of elements $\{x,x'\} \in M_i$ such that  one of $x,x'$ has neighbourhood $D_1$ on $S$ and the other has neighbourhood $D_2$ on 
$S$ is at most $\frac{n}{2^{17}\wpn(H)}$; or
 \item[(ii)] for  some $Y_i$ disjoint from $S$  for which $N_i$ exists and some triple  of subsets $D_1,D_2$ and $D_3$  of $S$,
the number of triples  $N_i$ such that  the $i^{th}$ element of the triple has neighbourhood $D_i$ on $S$  is at most $\frac{n}{2^{22}\wpn(H)}$; 
\item[(iii)] for some $Y_i$ 
intersecting $S$ in $2$ vertices,   we have that the vertices of 
$Y_i \cap S$ have at least $\frac{n}{10\wpn(H)}$ common neighbours in $Y_i$
and for some subset $D$ of $S$ which contains $S \cap Y_i$ the number of vertices of $Y_i$ with this  neighbourhood on $S$ is at most 
$\frac{n}{50000\wpn(H)}$.
\item[(iv)]for some $Y_i$ 
intersecting $S$ in $2$ vertices,   we have that the vertices of 
$Y_i \cap S$ have at least $\frac{n}{10\wpn(H)}$ common nonneighbours in $Y_i$
and for some subset $D$ of $S$ which is disjoint from  $S \cap Y_i$ the number of vertices of $Y_i$ with this  neighbourhood on $S$ is at most 
$\frac{n}{50000\wpn(H)}$.
\end{enumerate}
We choose a collection  $\mathcal A$ of    pairwise disjoint atypical sets.
We make this  choice  unique by choosing the lexicographically minimal one  of the given size (i.e.~we minimize the
size of the lexicographically smallest member of ${\cal A}$, then the lexicographically second smallest, and so on).  
If   our procedure grows $\mathcal A$  so that it has size $ \lfloor n^{1-\gamma }\rfloor +1$ we  terminate the procedure. 
Otherwise $\mathcal A$ is  a  maximal collection  of pairwise disjoint atypical sets.  
We have a canonical choice of $\mathcal A$.

We let $A$ be the set of vertices in the elements of ${\cal A}$ (so $|A|\le6|\mathcal A|$).  
We refer to $A$ as the {\em atypical set}, and set
$$X_i'=Y_i-A.$$ 
Recall that $m$ is the number of pairs of vertices in distinct classes $Y_i$.  We write
\begin{equation}\label{m01}
 m=m_0+m_1,
\end{equation}
where $m_0=m_0(A)$ is the number of pairs joining distinct classes $Y_i$ and incident with $A$,
and $m_1=m_1(A)$ is the number of pairs joining distinct classes $X_i'$.

We now bound the typical size of $\cal A$.  

It will be helpful to bound  the number of possibilities for various choices of edges that give $A=C$ as the atypical set  for a specific subset $C$ of $V_n$. 

\begin{claim}\label{claimY}
For all $H$  there is a $\beta=\beta_H>0$,  and a $\delta_H>0$ such that for all $\delta <\delta_H$ the following holds. For any $b$ and $\gamma>0$,
let $\mathcal P=\{Y_1,...,Y_{wpn(H)}\}$ be a partition of $V_n$ , let  $C \subset V_n$ with $|C|=O(n^{1-\gamma/2})$ vertices, and let 
$(\mathcal F_1,\dots,\mathcal F_{\wpn(H)})$ be a really canonical $H$-freeness witnessing $\wpn(H)$-sequence.
Consider $H$-free graphs $G$ that are 
$( \delta, \gamma, b)$-{\it compatible} 
with  $ \mathcal F_1,\dots,\mathcal F_{\wpn(H)}$ on  $\mathcal P$. 

(a) The number of choices for edges that are incident with $C$ and lie inside some class $Y_i$ is 
\begin{equation}\label{yin}
o(2^{(b+1)n}2^{\sqrt\delta |C|n}).
\end{equation}

(b) For any choice of $C_1$,  given $C=A$ the number of choices for edges that are incident with $C$ and join distinct classes is 
\begin{equation}\label{yout}
o(2^{m_0-(\beta_H+\sqrt\delta)|C|n}). 
\end{equation}

(c) The number of choices for edges incident with $C$ given $C=A$ is 
\begin{equation}\label{yboth}
o(2^{m_0+(b+1)n-\beta_H |C|n}),
\end{equation}
provided $n$ is sufficiently large (uniformly over the choice of $\mathcal P$).


\end{claim}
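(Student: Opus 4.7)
The plan is to prove the three parts in sequence: (a) by direct application of the bound (\ref{yinpre}), (b) by exhibiting an independent per-atypical-set saving of $2^{-\Omega(n)}$, and (c) by multiplying (a) and (b). For (a), the bookkeeping behind (\ref{yinpre}) applies with $S=C$: specifying $B$ (in $n^b$ ways), the $Y$-internal neighbourhoods of each vertex of $B$ (in $2^{bn}$ ways altogether), an assignment $C\to B$ (in $b^{|C|}$ ways), and the at-most-$\delta n$ in-class symmetric differences for each vertex of $C$ (in $\binom n{\delta n}^{|C|}$ ways) together determine all in-class edges incident with $C$. For $\delta$ sufficiently small and $n$ large, $b^{|C|}\binom n{\delta n}^{|C|}\le 2^{\sqrt\delta n|C|}$ and $n^b\le 2^n$, giving (\ref{yin}).

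For (b), fix $C_1$ (so the matchings $M_j$ and triple-sequences $N_j$ are determined as well) and show that each $S\in\mathcal A$ forces an independent saving of $2^{-c_Hn}$ in the count of cross-class edges incident with $S$, for some $c_H=c_H(\wpn(H))>0$. By the maximality of $\mathcal A$, each $S\in\mathcal A$ witnesses one of (i)--(iv). In case (i) there is an index $j$ with $Y_j\cap S=\emptyset$ and a pattern pair $(D_1,D_2)\subset S^2$ appearing in fewer than $n/(2^{17}\wpn(H))$ of the $\lceil n/(8\wpn(H))\rceil$ edges of $M_j$; so the pattern sequence of the cross-class edges between $V(M_j)$ and $S$ lies in a set of size at most $\binom{|M_j|}{n/(2^{17}\wpn(H))}\bigl(4^{|S|}-1\bigr)^{|M_j|}$, a $2^{-c_Hn}$-fraction of the trivial $4^{|S||M_j|}$ count (with $c_H$ coming from $|S|\le 6$ and the binary entropy of $2^{-17}$). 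Cases (ii)--(iv) are analogous, using $N_j$ in (ii) or, for (iii) and (iv), the $\ge n/(10\wpn(H))$ common (non-)neighbours of $S\cap Y_k$ in the intersecting class $Y_k$ as the witness set. Since the sets of $\mathcal A$ are pairwise disjoint, the cross-class edges on which distinct savings act are disjoint (their $S$-side endpoints lie in $S\setminus Y_{\text{witness}}$, and these sets are pairwise disjoint across $S\in\mathcal A$), so the savings multiply to at least $2^{-(c_H/6)|C|n}$ using $|\mathcal A|\ge|C|/6$. Setting $\beta_H=c_H/7$ (say) and restricting $\delta$ so that $\sqrt\delta<c_H/42$ yields (\ref{yout}).

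For (c) we simply multiply (\ref{yin}) and (\ref{yout}): $o(2^{(b+1)n+\sqrt\delta|C|n})\cdot o(2^{m_0-(\beta_H+\sqrt\delta)|C|n})=o(2^{m_0+(b+1)n-\beta_H|C|n})$. The main obstacle is the per-set saving in (b), and specifically the verification that the savings from distinct $S\in\mathcal A$ are independent. Case (iii) is the subtlest: the witness set is a $\Theta(n)$-subset of $Y_k$ determined by $C_1$, and the saving is on the cross-class component (edges to $S\setminus Y_k$) of the forbidden neighbourhood pattern. Here disjointness across distinct $S$'s reduces to disjointness of $S\setminus Y_k$, which is immediate from disjointness of the $S$'s in $\mathcal A$; and disjointness across different case-types (e.g.~case (i) using $V(M_j)$ versus case (iii) using a subset of $Y_k$) follows from the fact that the $S$-side endpoint always lies in $S$, and the $S$'s are pairwise disjoint. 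Once this is checked uniformly, the remaining binomial estimates are routine.
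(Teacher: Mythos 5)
Your proposal is correct and takes essentially the same route as the paper: part (a) is read off directly from \eqref{yinpre}, part (b) comes from exhibiting a per-atypical-set $2^{-\Omega(n)}$ saving acting on disjoint edge sets, and part (c) is the product of (a) and (b). The only real difference is presentational: the paper phrases the per-set saving probabilistically (choose the cross-class edges incident with $A$ uniformly at random, invoke Chernoff to get $\mathbb P[E(S,i_S)] \le 2^{-\Omega(n)}$, then use mutual independence of the events), while you count pattern sequences directly; these are equivalent, and your explicit verification that the saved edge sets are pairwise disjoint (including the cross-case comparison) is a welcome elaboration of the paper's one-line independence claim. Two small points you should make explicit. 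First, the count in (b) must be summed over all partitions of $C$ into sets of size at most $6$ together with the choice of witness index $i_S$ (and which of (i)--(iv) is witnessed); as in the paper this contributes at most $4^{|C|}\,|C|!=2^{O(|C|\log|C|)}=2^{o(|C|n)}$ since $|C|=O(n^{1-\gamma/2})$, and this is absorbed by the $o(\cdot)$, but it does need to be said. Second, each $S\in\mathcal A$ witnesses one of (i)--(iv) by the \emph{definition} of $\mathcal A$ as a collection of atypical sets, not by its maximality (maximality is used elsewhere to guarantee no further atypical set exists in the complement). Otherwise the proof is sound; the binomial estimate in case (i) is a little loose as written but gives the right exponential order.
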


\begin{proof}
The first bound follows from \eqref{yinpre}, and the last bound follows from the first two.  Thus it is enough to prove the second.

Consider a graph together with a partition compatible with $\mathcal P$. 
As before, we have a choice $C_1$ of the vertices $Z_i$ and the graphs $G[Y_i]$, and a choice
$C_2$ of the remaining edges between vertex classes.
We fix $\mathcal A$, and 
consider the adjacencies between elements of $\mathcal A$ and the sets $X_i$.
Since  every element $S$ of ${\cal A}$ has size at most six, we have:
\begin{itemize}
\item for any  $X_i$ disjoint from
$S$, if we choose the edges from $S$ to $X_i-A$ uniformly from all the possibilities and $M_i$ exists,    the expected number of 
elements of $M_i$  one of whose endpoints has neighbourhood $D_1$ on $S$ and the other of whose endpoints has  neighbourhood $D_2$  on  $S$ 
is at least  $\frac{|M_i|-|A|}{2^{12}}$, and  
\item for any  $X_i$ disjoint from
$S$, if we choose the edges from $S$ to $X_i-A$ uniformly from all the possibilities and $N_i$ exists  the expected number of 
triples of $N_i$  the $i^{th}$ element of which has neighbourhood $D_i$ on $S$  
is at least  $\frac{|N_i|-|A|}{2^{18}}$, and  
\item   for any  $X_i$    intersecting $S$ in two vertices whose common neighbourhood in $X_i$    has least $\frac{n}{10\wpn(H)}$ elements,  if we were to choose the edges between $S-X_i$ and $X_i-S-A$ uniformly then the expected number of 
vertices  of $X_i$ with a given neighbourhood $D$ on  $S$  which contains $S \cap X_i$ is at least $\frac{n-10|A|}{160\wpn(H)}$. 
\item   for any  $X_i$    intersecting $S$ in two vertices whose common nonneighbourhood in $X_i$    has least $\frac{n}{10\wpn(H)}$ elements,  if we were to choose the edges between $S-X_i$ and $X_i-S-A$ uniformly then the expected number of 
vertices  of $X_i$ with a given neighbourhood $D$ on  $S$  which is disjoint from  $S \cap X_i$ is at least $\frac{n-10|A|}{160\wpn(H)}$. 
\end{itemize}
For any partition $\mathcal Z$ of $A$ into sets of size at most $6$, and a fixed choice of $C_1$, we  
consider a uniform random choice of edges between classes $Y_i$ that are incident with $A$ and bound the probability that for the resulting graph $G$ the  
elements of $\mathcal Z$ are all atypical.  For this to be the case, each $S$ in ${\cal Z}$  satisfies one of (i), (ii), (iii) , or (iv)
for some $X_i$. For each $S$ we specify an index $i_S$ and let $E(S,i_S)$ be the 
event that one of (i),(ii),(iii) or (iv)  holds for $S$ and $X_{i_S}$. 
It follows from the usual Chernoff bounds\cite{C81} that 
$\mathbb P[E(S,i)]$ is at most $2^{-\Omega(n)}$.Furthermore these events are mutually independent.
So the probability these events all hold is at most $2^{\Omega(|C|n)}$. 
But for a fixed choice of $A$, the number of choices 
for $\mathcal Z$ is at most $4^{|A|}|A|!=2^{o(|A|n)}$.  The bound now follows.
\end{proof}

Claim \ref{claimY} implies the atypical set $A$ is usually not too large.

\begin{claim}\label{c21}
For every $H$ there is a $\delta_H$ such that for all  $\delta<\delta_H$, $\gamma$  and $b$, for each $i$ between $1$ and $wpn(H)$, 
if $\mathcal F_i$ is a hereditary family such that  there are $2^{o(l^{2-\gamma})}$ 
graphs with $l$ vertices in ${\cal F}_i$.  Then,  for any $b$, 
the  number of bad  $H$-free graphs $G$  for which there is a ${\cal P}$ 
such that $G$ is 
$( \delta, \gamma, b)$-{\it compatible}   with $ \mathcal F_1,\dots,\mathcal F_{\wpn(H)}$
on   $\mathcal P$, and $|{\cal A}|= \lfloor n^{1-\gamma }\rfloor +1$ is $o(2^{(1-\frac{1}{wpn(H)}){n \choose 2}})$.
\end{claim}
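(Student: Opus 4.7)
The plan is to expose a bad graph $G$ which is $(\delta,\gamma,b)$-compatible with some $\mathcal F_1,\ldots,\mathcal F_{\wpn(H)}$ on a partition $\mathcal P=\{Y_1,\ldots,Y_{\wpn(H)}\}$ in a controlled order and extract the $o(2^m)$ savings from Claim \ref{claimY}. First I would fix $\mathcal P$ (at most $\wpn(H)^n=2^{O(n)}$ choices) and the atypical set $A=\bigcup\mathcal A$. Since $|\mathcal A|=\lfloor n^{1-\gamma}\rfloor+1$ and every member of $\mathcal A$ has at most $6$ vertices, we have $n^{1-\gamma}\le |A|\le 6(n^{1-\gamma}+1)$, so the number of choices for $A$ together with its partition into atypical subsets is at most $2^{O(n^{1-\gamma}\log n)}=2^{o(n^{2-\gamma})}$.

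For each fixed $\mathcal P$ and $A$, I would count graphs $G$ such that $\bigcup\mathcal A(G,\mathcal P)=A$ by splitting the edges of $G$ into those incident with $A$ and those not. Claim \ref{claimY}(c) gives at most $o(2^{m_0+(b+1)n-\beta_H|A|n})$ choices for the edges incident with $A$. For the edges not incident with $A$, I would first choose the partition of each $Y_i-A$ into $X_i-A$ and $Z_i-A$ ($2^{o(n^{2-\gamma})}$ ways), then the induced subgraph $G[X_i-A]$, which lies in $\mathcal F_i$ by the hereditary assumption ($2^{o(n^{2-\gamma})}$ choices per class), then the edges of $Y_i-A$ not inside $X_i-A$ using \eqref{upperv} (at most $o(2^{\sqrt{\delta}n^{2-\gamma}})$ choices), and finally the $m_1$ between-class pairs not touching $A$ (at most $2^{m_1}$ choices).

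Combining everything and using $m_0+m_1=m=(1-\tfrac{1}{\wpn(H)})\binom{n}{2}+O(n)$, the count for fixed $(\mathcal P,A)$ is
\[
o\!\left(2^{(1-\tfrac{1}{\wpn(H)})\binom{n}{2}+O(n)+o(n^{2-\gamma})+\sqrt{\delta}n^{2-\gamma}-\beta_H|A|n}\right).
\]
Since $|A|\ge n^{1-\gamma}$ and $\delta<\delta_H$ is chosen small enough that $\sqrt{\delta}<\beta_H/4$, the term $\beta_H|A|n\ge \beta_H n^{2-\gamma}$ absorbs all the error terms for $n$ large. Summing over the $2^{O(n)}$ choices of $\mathcal P$ and the $2^{o(n^{2-\gamma})}$ choices of $A$ preserves the bound $o(2^{(1-1/\wpn(H))\binom{n}{2}})$.

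The main obstacle I anticipate is bookkeeping around Claim \ref{claimY}(b) and (c): these bounds are conditional on $A$ being the actual atypical set, so the exposure must be arranged so that the between-class edges incident with $A$ are exposed after the within-class structure, which determines the matchings $M_i$ and triple sets $N_i$ used to define atypicality. One also has to verify that $\delta$ can be chosen small enough (depending only on $H$) to beat both the $\sqrt{\delta}n^{2-\gamma}$ slack coming from the within-class contribution and the implicit constants hidden in the hereditary growth bound $2^{o(l^{2-\gamma})}$ when combined across the $\wpn(H)$ classes.
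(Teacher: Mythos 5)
Your proposal follows essentially the same route as the paper: fix $\mathcal P$, use the hypothesis on $\mathcal F_i$ and \eqref{upperv} to bound the within-class choices by $2^{o(n^{2-\gamma})+\sqrt\delta n^{2-\gamma}}$, invoke Claim \ref{claimY} to extract the $\beta_H|A|n \ge \beta_H n^{2-\gamma}$ savings on the $A$-incident edges, and sum over the $2^{O(n)}$ partitions. Your explicit accounting of the choice of $A$ and your note about ordering the exposure so that $M_i,N_i$ are fixed before applying Claim \ref{claimY} are points the paper leaves implicit, but they do not change the argument.
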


\begin{proof}
We bound the probability that the atypical set has size  $a=\lfloor  n^{1-\gamma/2}\rfloor+1$, for a specific partition ${\cal P}$.   

For each $\mathcal F_i$, by hypotheis  the number of ways of choosing $G[X_i]$ is at most 
$2^{o(n^{2-\gamma})}$.  By \eqref{upperv},
the number of choices for the $Z_i$ and edges inside the classes $Y_i$ that are incident with them is at most $2^{\sqrt\delta n^{2-\gamma}}$.
It follows from \eqref{yout} that the number of choices for $A$ and the edges incident with $A$ between classes is at most 
$2^{m_0+(b+2)n-\beta_H an}$, and so the number of choices for $A$ and the edges between classes is at most
$2^{m+(b+1)n-\beta_H an}$.  

So the total number of choices is at most
$$2^{m+(b+2)n-\beta_H an+o(n^{2-\gamma})+\sqrt\delta n^{2-\gamma}}.$$
Summing over all the $2^{O(n)}$ choices for $\cal P$, this is $o(2^m)=o(2^{(1-\frac{1}{wpn(H)}){n \choose 2}})$.
\end{proof}

In light of Claim \ref{c21}, we may restrict to graphs which are compatible with a partition 
$\mathcal P$ such that $|{\mathcal A}|\le n^{1-\gamma}$ (note that this implies
that we choose a set $\mathcal A$ that is maximal satisfying (i) and (ii) above).

\begin{claim}\label{c22}
Suppose  first that   for  each $i$ between $1$ and $wpn(H)$, 
$\mathcal F_i$ is a hereditary family such that  there are $2^{o(l^2)}$ 
graphs with $l$ vertices in ${\cal F}_i$. 
Suppose further that  ${\mathcal J}$ is a set of subgraphs of $H$ and $H_1,...,H_{wpn(H)}$ is a set of subgraphs  
each of which is a vertex or stable set of size at most 3, 
 such that  for every $i$ with $1 \le i \le wpn(H)$,
and every graph $J \in \mathcal J$  
we can partition $V(H)$ into sets $L_1,..,L_{wpn(H)}$ such that $H[L_i]$ is isomorphic to $J$,  
and for $j \neq i$, $H(L_j)=H_j$. 
Then, for any   $\gamma$, $b$  and sufficiently small positive  $\delta$,   the number of $H$-free graphs  $G$ on $V_n$ for which there is a partition ${\cal P}$  of $V_n$ 
such that (i) $G$ is 
$( \delta, \gamma, b)$-{\it compatible}  with $\mathcal F_1,\dots,\mathcal F_{\wpn(H)}$ on   $\mathcal P$, (i)  for every $j$, if $H_j$ is an edge then $M_j$ exists  and otherwise  $N_j$  exists,  (iii) ${\cal A}$ is maximal, 
and  (iv) for some $i$, $X_i^\prime$  contains a set  of  vertices 
inducing a graph in $\mathcal J $ is $o(2^{(1-\frac{1}{wpn(H)}){n \choose 2}})$.
\end{claim}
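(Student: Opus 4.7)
The plan is a two-step exposure argument. First I fix the \emph{background data}: the partition $\mathcal P$, the within-class induced subgraphs $G[Y_j]$, the atypical set $A$ together with all edges incident with $A$, and a distinguished set $S\subset X_i'$ satisfying $G[S]\in\mathcal J$ for some index $i$. Then I count the possibilities for the remaining between-class edges, namely those joining distinct sets $X_j'$, that yield an $H$-free graph. The target bound is $2^{m-\Omega(n^{2})}=o(2^{(1-1/\wpn(H))\binom{n}{2}})$.

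The number of choices for the background is at most $2^{O(n)}$ for $\mathcal P$; $2^{o(n^{2})}$ in total for the within-class subgraphs $G[X_j]\in\mathcal F_j$, using the hypothesis $|\mathcal F_j(l)|=2^{o(l^{2})}$; $2^{o(n^{2})}$ for the within-class edges incident with $Z_j$, by~\eqref{yinpre} combined with $|Z_j|\le n^{1-\gamma}$; $2^{m_0+O(n)}$ for $A$ and all edges incident with $A$, by Claim~\ref{claimY}(c) applied with $|A|\le 6n^{1-\gamma}$; and $n^{O(|V(H)|)}$ for the choice of $i$ and $S$. Multiplied together, the background costs at most $2^{m_0+o(n^{2})}$.

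For the between-class completions, I use the hypothesis on $\mathcal J$ to fix a partition $V(H)=L_i\sqcup\bigsqcup_{j\ne i}L_j$ with $H[L_i]\cong J$ and $H[L_j]=H_j$, and fix a bijection $L_i\to S$ realizing the isomorphism. Set $W_j:=M_j$ if $H_j$ is an edge and $W_j:=N_j$ otherwise, so by (ii) each $|W_j|\ge n/(8\wpn(H))$. Since $\mathcal A$ is maximal and $S\cap A=\emptyset$, $S$ is not atypical, and so clauses (i) and (ii) of the atypicality definition give $\Omega(n)$ elements of $W_j$ realizing every prescribed neighborhood pattern on $S$. For every tuple $(e_j)_{j\ne i}\in\prod_{j\ne i}W_j$ and every compatible system of bijections $L_j\to e_j$ extending $L_i\to S$, there is a unique ``induced $H$'' Boolean pattern on the $O(1)$ between-class edges among $S\cup\bigcup_{j\ne i}e_j$, and $H$-freeness of $G$ forbids at least one of these bits.

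To turn this into a $2^{-\Omega(n^{2})}$ saving, I extract $\Omega(n^{2})$ tuples whose forbidden-bit sets are pairwise disjoint; this is the main obstacle. For $\wpn(H)\ge 3$ this is straightforward: fix two distinct indices $j^{*},j^{**}\in\{1,\ldots,\wpn(H)\}\setminus\{i\}$ and the entries $e_j$ for $j\ne i,j^{*},j^{**}$, and let $(e_{j^{*}},e_{j^{**}})$ range over $W_{j^{*}}\times W_{j^{**}}$. Distinct elements of $M_{j^{*}}$ (respectively $N_{j^{*}}$) are vertex-disjoint, so distinct pairs use pairwise disjoint between-class edges in $e_{j^{*}}\times e_{j^{**}}$, and the edge-disjoint-clique extraction from the proof of Lemma~5 in~\cite{LRSTT} yields $\Omega(n^{2})$ edge-disjoint constraints. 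Each constraint forbids one of $2^{O(1)}$ Boolean patterns of its bits, so the number of valid completions is at most $(1-2^{-O(1)})^{\Omega(n^{2})}\cdot 2^{m_1}=2^{m_1-\Omega(n^{2})}$; combined with the background bound this gives $2^{m-\Omega(n^{2})}$ in total. The case $\wpn(H)=2$ falls outside this analysis and must be handled separately in the applications, using the much sharper bound $|\mathcal F_i(l)|=2^{O(l^{3/2})}$ from Theorem~\ref{MStheorem} together with an analogous edge-disjoint extraction iterated over many copies of $J$ within $X_i'$.
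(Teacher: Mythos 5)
Your proposal follows essentially the same route as the paper: fix the background data (partition, within-class graphs, atypical set, the set $S$ inducing $J$), use non-atypicality of $S$ to extract large families $P_j\subset M_j$ or $N_j$ that already have the right adjacency to $S$, and then use the edge-disjoint clique extraction from \cite{LRSTT} in the complete $(\wpn(H)-1)$-partite auxiliary graph on the $P_j$ to get $\Omega(n^2)$ edge-disjoint forbidden patterns among the between-class edges. This matches the paper's argument.

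The one genuine issue is your treatment of $\wpn(H)=2$. You write that this case ``falls outside this analysis and must be handled separately in the applications, using the much sharper bound $|\mathcal F_i(l)|=2^{O(l^{3/2})}$ \ldots together with an analogous edge-disjoint extraction iterated over many copies of $J$.'' This misidentifies what is happening there. When $\wpn(H)=2$, the constraint on the chosen $p_{3-i}$ does not leave any between-class edges free: the edges from $T$ to $p_{3-i}$ are already forced by the requirement that $g$ extend to $H[L_i\cup L_{3-i}]$, and there is no other class. So the ``forbidden pattern'' has zero bits. Concretely, since $\mathcal A$ is maximal and $T$ is disjoint from $A$, $T$ is not atypical, and clause (i) or (ii) of the atypicality definition applied to $Y_{3-i}$ produces at least one pair in $M_{3-i}$ (resp.\ triple in $N_{3-i}$) whose neighbourhoods on $T$ are exactly those dictated by $H$; together with $T$, this gives an induced copy of $H$, contradicting $H$-freeness. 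Thus for $\wpn(H)=2$ the set of graphs satisfying (i)--(iv) is empty and the count is $0$; no sharper bound on $|\mathcal F_i(l)|$ is needed, and indeed the $2^{O(l^{3/2})}$ bound is not part of the claim's hypotheses. Your proposed workaround is both unnecessary and not justified by the stated assumptions, so as written your proof leaves the $\wpn(H)=2$ case (which is used for $H=C_6$) open; the fix is the one-paragraph observation above, not a new counting argument.
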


\begin{proof}

Let us fix $C_1$,the $M_j,N_j$ and $i$, and  consider 
the number of choices for $C_2$ such that $X_i^\prime$  contains a set  $T$  of  vertices 
inducing  a graph $J$ in $\mathcal J$. For any such $G[T]$,
there is a  partition of $H$ into $L_1,..,L_{wpn(H)}$ as in the hypotheses of the lemma. 
We fix an isomorphism $g$ from $G[T]$ to  $H[L_i]$.

$T$ is disjoint from $A$, so it is not atypical.  Thus 
for each $j\ne i$ we can choose a set $P_j$ of $\lfloor \frac{n}{2^{20}\wpn(H)} \rfloor$ disjoint subsets  from $M_{j}$ or $N_j$  such that, for each $p_j \in P_j$, $g$ extends to an isomorphism 
between $H[L_i \cup L_j]$ and $G[T \cup p_j]$.  For any choice of elements $(p_j)_{j\ne i}$ with $p_j\in P_j$ for each $j$, there is some way of choosing edges between the
$p_j$ such that $T\cup \bigcup_{j\ne i} p_j$ induces a copy of $H$.

So consider the complete $(l-2)$ partite graph whose vertex classes consist of the sets $p_j$, $j\ne i$. 
A standard argument (as in the proof of Theorem  \ref{Hstruc.thm})  tells us that  we can 
find $\Omega(n^2)$  edge disjoint cliques $K_{l-2}$ in this graph. For each such clique, there is a choice  of  edges 
between the subsets  corresponding to its vertices  such that if $G$ makes this choice then $G$ contains $H$ as an induced subgraph
(induced by $T$ and the subsets of vertices in the $p_j$, $j\ne i$, corresponding to the clique). 
It follows that for some $\mu$ which depends on $H$,
the number of  choices for $C_2$ is at 
most $2^{m-\mu n^2}$.  By \eqref{upperv} and hypothesis, the number of choices for  $C_1$ and ${\cal P}$   is $2^{o(n^2)}$, 
summing over all choices of $C_1$ we see that the number of graphs   
containing such a four vertex set is $o(2^m)$.
\end{proof}

In the same vein, we obtain:

\begin{claim}\label{c22new}
Suppose  first that for   for  each $i$ between $1$ and $wpn(H)$, 
$\mathcal F_i$ is a hereditary family such that  there are $2^{o(l^2)}$ 
graphs with $l$ vertices in ${\cal F}_i$. 
Suppose further that  ${\mathcal J}$ is a set of subgraphs of $H$ and $H_1,...,H_{wpn(H)}$ is a set of two vertex subgraphs  
 such that  for every $i,j$ with $1 \le i <j \le wpn(H)$,
and every graph $J \in \mathcal J$  
we can partition $V(H)$ into sets $L_1,..,L_{wpn(H)}$ such that $H[L_i \cup L_j]$ is isomorphic to $J$,  
and for $k \not\in \{i,j\}$, $H(L_k)=H_k$. 
Then, for any   $\gamma$, $b$  and sufficiently small positive $\delta$,   the number of $H$-free graphs  $G$ on $V_n$ for which there is a partition ${\cal P}$  of $V_n$ 
such that (i) $G$ is 
$( \delta, \gamma, b)$-{\it compatible}  with $\mathcal F_1,\dots,\mathcal F_{\wpn(H)}$ on   $\mathcal P$
 (ii)  for every $j$,  if $H_j$ is  an edge then  $M_j$ is nonempty while otherwise $N_j$ is nonempty, (iii) ${\cal A}$ is maximal, 
and  (iv) for some $i\neq j $, $X_i^\prime \cup X_j^\prime$  contains a set  of  vertices 
inducing a graph in $\mathcal J $ is $o(2^{(1-\frac{1}{wpn(H)}){n \choose 2}})$.
\end{claim}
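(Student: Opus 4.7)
The argument closely parallels that of Claim \ref{c22}. I would fix a partition $\mathcal P$, a choice $C_1$ (the $Z_k$, $G[Y_k]$, $M_k$ and $N_k$), indices $i<j$, an isomorphism type $J\in\mathcal J$, and a candidate set $T\subseteq X_i'\cup X_j'$ of $|V(J)|$ vertices with $G[T]\cong J$ via an isomorphism matching the split $T\cap X_i',T\cap X_j'$ with $L_i,L_j$ for some valid partition of $V(H)$ supplied by the hypothesis. The only new feature compared with Claim \ref{c22} is that $G[T]\cong J$ now fixes $|L_i|\cdot|L_j|$ inter-class edges (belonging to $C_2$) in addition to the intra-class edges fixed by $C_1$, but this is a bounded number and has no effect on the eventual count.

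Since $T\subseteq X_i'\cup X_j'$ is disjoint from $A$, maximality of $\mathcal A$ rules out $T$ being atypical. Applying condition (i) or (ii) in the definition of atypicality (according as $H_k=K_2$ or $H_k=E_2$), for every $k\notin\{i,j\}$ I obtain a family $P_k$ of $\Omega(n)$ pairwise-disjoint pairs $p_k\subseteq X_k'$ drawn from $M_k$ or $N_k$ whose adjacency pattern to $T$ matches $H[L_i\cup L_j,L_k]$, so that $G[T\cup p_k]\cong H[L_i\cup L_j\cup L_k]$ for every $p_k\in P_k$.

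The proof then splits on $\wpn(H)$. If $\wpn(H)\le 3$, there is at most one index $k\notin\{i,j\}$, so every edge of $G[T\cup p_k]$ is already determined (by $C_1$, by the atypicality-forced adjacency between $T$ and $p_k$, and by $G[T]\cong J$); by construction $G[T\cup p_k]\cong H$, contradicting $H$-freeness, so no such $G$ exists. If $\wpn(H)\ge 4$, I would consider the complete $(\wpn(H)-2)$-partite auxiliary graph on vertex classes $\{P_k:k\notin\{i,j\}\}$, each of size $\Omega(n)$, and by the same standard packing argument invoked in the proof of Claim \ref{c22} extract $\Omega(n^2)$ pairwise edge-disjoint copies of $K_{\wpn(H)-2}$. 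Each such abstract clique forbids one pattern on the $O(1)$ $C_2$-edges lying between its constituent pairs $p_k$, and distinct abstract edges $\{p_k,p_{k'}\}$ determine disjoint sets of $C_2$-edges, so the constraints compound into a saving of $2^{-\mu n^2}$ in the number of admissible $C_2$'s, for some $\mu=\mu_H>0$.

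Finally I would sum. The hypothesis on the $\mathcal F_k$, together with~\eqref{upperv}, bounds the number of $C_1$ by $2^{o(n^2)}$; the tuple $(i,j,T,J)$ has $n^{O(1)}$ choices; and there are $2^{O(n)}$ partitions $\mathcal P$. Multiplying, the number of bad graphs under consideration is at most $2^{m-\mu n^2+o(n^2)}=o(2^m)=o(2^{(1-1/\wpn(H))\binom n2})$. The main technical obstacle is the $\Omega(n^2)$-packing of edge-disjoint cliques in the auxiliary multipartite graph and the verification that the corresponding forbidden $C_2$-edge sets are pairwise disjoint; both are exactly the points needed in the proof of Claim \ref{c22} (and of Theorem \ref{Hstruc.thm}), and no new combinatorics is required beyond the observation that distinct abstract edges always involve at least one different $p_k$ and hence distinct pairs of vertices in $V_n$.
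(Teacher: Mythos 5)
Your proposal is correct and is essentially the paper's intended argument: the paper gives no separate proof of this claim, saying only that it follows ``in the same vein'' as Claim \ref{c22}, and your adaptation (using non-atypicality of the six-vertex set $T$ spanning $X_i'\cup X_j'$ to extract the families $P_k$ for $k\notin\{i,j\}$, then the edge-disjoint clique packing among the $P_k$ to save a factor $2^{-\mu n^2}$ on the choices of $C_2$) is exactly that argument. Your explicit handling of the degenerate case $\wpn(H)\le 3$ and the observation that the inter-class edges inside $T$ are harmlessly fixed are correct refinements of what the paper leaves implicit.
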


\subsection{Reexamining Our Partition}

We  show now  that we can focus on bad graphs where  ${|\cal A}|$ is polylogarithmic  in $\log n$
and the $G[X'_i]$ have a very strong structure.

For $H=C_{2l}$, we say that a bad  $H$-free  graph $G$ on $V_n$   is  {\it strongly}  $( \delta, \gamma, b)$-{\it compatible} 
with  $ {\cal F}_1,\dots,{\cal F}_{\wpn(H)}$ on  some partition  $\mathcal P=\{Y_1,...,Y_{wpn(H)}\}$  of $V_n$ if in addition to being compatible the following hold for the $M_i,N_i$ and 
$\cal A$ 
introduced in  the last section:
\begin{enumerate}
\item if $l>3$ then every $G[X'_i]$ is $P_4$-free and contains no graph with 4 vertices and 2 edges
 \item if $l>4$ then  every $G[X'_i]$ contains no graph with four vertices and 1 edge, and for every $i \neq j$, $G[X'_i \cup X'_j]$ is $P_6$-free
\item if $l>5$ no $G[X'_i]$ contains an $E_4$.
\item if $l=4$  then for  $j,k \neq i$, ,if $M_i$ exists $G[X'_j \cup X'_k]$ is $P_6$-free while if $N_i$ exists $G[X'_j \cup X'_k]$ does not contain the disjoint union of 2 $P_3$s.
\item if $l=3$ and $G[X'_i]$ contains a $P_4$ then $M_{3-i}$ is empty,  $N_{3-i}$ is not,  $G[X'_{3-i}]$ is $P_4$ free, and $G[X'_i]$ is the complement of a graph of girth $5$. 
\item $|{\cal A}|=o((\log n)^2) $.
\end{enumerate}

\begin{claim}
\label{nc1}
For $l>3$, for every sufficiently small  $\delta>0$ we can choose a $b$ and a $\gamma$ such that the number of $C_{2l}$ free graphs  on $V_n$ which are not {\it strongly}  $(\ \delta, \gamma, b)$-{\it compatible}  with  some really canonical $H$-freeness witnessing sequence  $\mathcal F_1,\dots,\mathcal F_{\wpn(H)}$ on  some partition  $\mathcal P=\{Y_1,...,Y_{wpn(H)}\}$  of $V_n$ is a $o(1)$ proportion of the $C_{2l}$-free graphs. 
\end{claim}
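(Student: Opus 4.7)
\medskip

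\noindent\textbf{Proof plan for Claim \ref{nc1}.}  The plan is to start from Theorem~\ref{Hstruc.thm}, which supplies a set of bad $C_{2l}$-free graphs that are $(\delta,\gamma,b)$-compatible with some really canonical witnessing sequence $\mathcal F_1,\dots,\mathcal F_{\wpn(C_{2l})}$ on some partition $\mathcal P$, and then discard, one by one, the sub-families that violate each of the strong compatibility conditions (1)--(6) for $l>3$. Each discarded sub-family will be shown to contain only an $o(1)$ fraction of the at least $2^{(1-1/\wpn(C_{2l}))\binom{n}{2}}$ good graphs, so summing over the finitely many conditions gives the claim.

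Conditions (1)--(4) are forbidden induced subgraph conditions on some $G[X'_i]$ or on some $G[X'_i\cup X'_j]$, and are exactly of the form handled by Claim~\ref{c22} and Claim~\ref{c22new}. To apply these claims, for each forbidden graph $J$ (namely $P_4$, a 4-vertex graph with one or two edges, $E_4$, $P_6$, or two disjoint $P_3$'s) and each pattern saying, for each $j$, whether the side-class contribution is an edge (so $M_j$ is used) or a stable triple (so $N_j$ is used), one must verify that $C_{2l}$ admits a vertex partition putting $J$ into the ``bad'' class and the chosen edge/triple into each remaining class. These partition facts are the content of Claim~\ref{c10}(b) and (c) for the 4-vertex obstructions, and are supplied by entirely analogous elementary partitions of $C_{2l}$ for the $P_6$ and $E_4$ obstructions (which require $l\ge5$ and $l\ge6$ respectively, matching the hypotheses in (2) and (3)); the $l=4$ obstructions of (4) are handled by the corresponding partitions of $C_8$. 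The hypothesis $|\mathcal F_i\cap\mathcal G_l|\le 2^{o(l^{2-\gamma})}$ required by Claims~\ref{c22} and~\ref{c22new} holds in all cases from Lemma~\ref{finsize}: clique families contribute a single graph, while the relevant ``exceptional'' family has $f_i^*(l)\le 2^lB_l=2^{O(l\log l)}$. Since the existence patterns of the $M_j,N_j$ number only $O(1)$ and $J$ ranges over a finite list, finitely many invocations of Claims~\ref{c22}/\ref{c22new} remove all offending graphs.

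Condition (6), the polylogarithmic bound $|\mathcal A|=o((\log n)^2)$, is stronger than the $n^{1-\gamma}$ bound provided directly by Claim~\ref{c21}. Once conditions (1)--(4) are in force, the graphs $G[X_i]$ lie in families of size $2^{O(l\log l)}$ on $l$ vertices (by Lemma~\ref{finsize}), and rerunning the count in the proof of Claim~\ref{c21} with this sharper input yields: the number of bad graphs with $|\mathcal A|=a$ is at most $2^{m+O(n\log n)-\beta_H a n}$, which is $o(2^m)$ whenever $a=\omega(\log n)$. Taking, for instance, the threshold $a\ge\log n\cdot\log\log n$ thus eliminates an $o(1)$ fraction of the good graphs and leaves $|\mathcal A|=o((\log n)^2)$.

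The main obstacle is purely a bookkeeping one: one must exhibit the required vertex partitions of $C_{2l}$ for each forbidden $J$ and each pattern of $M_j/N_j$ availability, and verify that the $H_j$ used match the hypotheses of Claims~\ref{c22}/\ref{c22new}. The $l=4$ case needs to be treated separately from the generic $l\ge5$ case because the witnessing structure for $C_8$ is qualitatively different (components of the complement are joins of cliques and stable sets, not unions of stars and triangles), but once each partition certificate is laid down, the counting is uniform across all cases.
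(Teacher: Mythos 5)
Your overall strategy is the paper's: get $(\delta,\gamma,b)$-compatibility from Theorem~\ref{Hstruc.thm}, use Claim~\ref{c21} to force $\mathcal A$ maximal, and then discharge the forbidden-subgraph conditions (1)--(4) by feeding the appropriate vertex partitions of $C_{2l}$ (Claim~\ref{c10} and its analogues for $P_6$, $E_4$, and the two $l=4$ obstructions) into Claims~\ref{c22} and~\ref{c22new}; condition (5) is vacuous for $l>3$, and your verification of the $2^{o(l^2)}$ hypothesis via the classification of really canonical sequences and Lemma~\ref{finsize} is exactly what the paper does. That part is fine.

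The genuine gap is in your treatment of condition (6). You claim that ``rerunning the count in the proof of Claim~\ref{c21}'' with the sharper bound $2^{O(l\log l)}$ on the families containing the $G[X_i]$ yields at most $2^{m+O(n\log n)-\beta_H an}$ bad graphs with $|\mathcal A|=a$. But the count in Claim~\ref{c21} has three factors: the choices for the $G[X_i]$, the choices for the $Z_i$ and the edges inside the classes incident with them (bounded via \eqref{upperv} by $2^{\sqrt\delta n^{2-\gamma}}$), and the choices for the edges between classes (bounded via \eqref{yout}). Improving the first factor to $2^{O(n\log n)}$ leaves the second untouched, so the rerun actually gives $2^{m+O(n\log n)+\sqrt\delta n^{2-\gamma}-\beta_H an}$. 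Since $\beta_H an=O(n(\log n)^2)\ll n^{2-\gamma}$ for polylogarithmic $a$, this is not $o(2^m)$, and your threshold $a\ge\log n\cdot\log\log n$ cannot be reached this way; Claim~\ref{c21}'s method is only good down to $a$ of order roughly $n^{1-\gamma}$. The paper closes this differently: once condition (1) is established, the \emph{whole} graph $G[X_i']=G[Y_i-A]$ --- which includes the vertices of $Z_i-A$, not just $X_i$ --- is $P_4$-free, so the edges inside the classes that are not incident with $A$ admit only $(2n)^{2n(l-1)}=2^{O(n\log n)}$ choices in total, with no $2^{\sqrt\delta n^{2-\gamma}}$ factor; the edges incident with $A$ are then controlled by \eqref{yboth}, whose $2^{-\beta_H|A|n}$ saving beats $2^{O(n\log n)}$ exactly when $|A|=\omega(\log n)$, and one compares against the at least $2^m$ good graphs on the same partition. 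You need this replacement of the \eqref{upperv} factor by the $P_4$-freeness of $G[Y_i-A]$; without it the polylogarithmic bound on $|\mathcal A|$ does not follow.
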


\begin{proof}
Applying Theorem \ref{Hstruc.thm}  we  can choose $b$ and $\gamma$ so that the number of $C_{2l}$ free graphs which are not 
  $( \delta, \gamma, b)$-{\it compatible}  with  some  really canonical $H$-freeness witnessing sequence $ {\cal F}_1,\dots,{\cal F}_{\wpn(H)}$ on some  partition  $\mathcal P=\{Y_1,...,Y_{wpn(H)}\}$  of $V_n$ is a $o(1)$ proportion of the $C_{2l}$-free graphs.  Furthermore, reducing $\gamma$ maintains this property so we can assume $\gamma<\frac{1}{4}$. We focus on the remaining $H$-free 
  graphs. 
 
 By our choice of $\gamma$ and  our bounds on the growth rates of the hereditary families in the relevant witnessing sequences it follows that for sufficiently small $\delta$, 
 The hypotheses of Claims \ref{c21},\ref{c22} and \ref{c22new} apply. Thus, by Claim \ref{c21}, the number of  remaining bad graphs where we can make a choice of
 ${\cal A}$ which is not maximal is a $o(1)$ proportion of the $C_{2l}$-free graphs.  
 
 We recall that for $l>3$, for  every $a+b=l-2$, $C_{2l}$ can be partitioned into $a$ edges,$ b $ nonedges, and  any graph which is a  $P_4$  or has four vertices and 2 edges, . Hence,  
 Claim \ref{c22} implies that the number  of the remaining bad graphs 
 which have a partition where ${\cal A}$ is maximal but (1) does not hold  is a $o(1)$ proportion of the $C_{2l}$-free graphs.  
 
 Now, for $l>4$  we have both (i) for every $a+b=l-2$, $C_{2l}$ can be partitioned into $a$ edges,$ b $ nonedges, and any graph which  has four vertices and 1 edge,  and (ii) for every $a+b=l-3$, $C_{2l}$ can be partitioned into  $a$ edges, $ b $ nonedges, and a  $P_6$.   Furthermore, $C_8$ can be partitioned into a $P_6$ and an edge and the disjoint union of 2 $P_3$s and a non-edge.  Hence, Claims  \ref{c22}  and  \ref{c22new} imply that the number  of the remaining bad graphs   which have a partition where $|{\cal A}|$ is maximal but one of (2)  or (4) does not hold  is a $o(1)$ proportion of the $C_{2l}$-free graphs.

A similar argument shows that for $l>5$, the number  of the remaining bad graphs which have a partition where ${\cal A}$ is maximal but (3) does not hold  is a $o(1)$ proportion of the $C_{2l}$-free graphs.

If $l=3$,  Claim \ref{c22} implies that the number of  such bad graphs for which  ${\cal A}$ is maximal,
$G[X'_i]$ contains a $P_4$ and either  $M_{3-i}$  exists or $N_{3-i}$ exists and $G[X'_i]$ contains an $E_3$ or 
a $2K_2$  is a $o(1)$ proportion of the $C_{2l}$-free graphs. Now, if $G[X'_i]$ contains no $E_3$ then $M_i$ 
is nonempty and applying  Claim  \ref{c22}  once again, we obtain that the number  of the remaining bad graphs   which have a partition where ${\cal A}$ is maximal but (5) does not hold  is a $o(1)$ proportion of the $C_{2l}$-free graphs.

  We recall that the number of $P_4$ -free graphs on $n$ vertices is less than $(2n)^{2n}$. So for any $l$  the number of choices of the edges within the $G[X'_i]$ such that either (i) all these graphs are $P_4$ free, or (ii) $l=3$,  for some $i$  $G[X_i]$ is $P_4$-free and $G[X'_{3-i}]$ is the complement of a graph of girth 5, is at most  $(2n)^{2n}$ times the number of choices for the edges  within  the $Y_i$  of good graphs for which ${\cal P} $ is a certifying partition. So, since there are at most $2^n$ partitions, Equation (\ref{yboth}) implies that the number of bad $C_{2l}$-free graphs for which
  $|{\cal A}|=\omega(\log n^{3/2})$ and (i) or (ii) hold   is a $o(1)$ proportion of the $C_{2l}$-free graphs. 
\end{proof} 

\section{  A Stronger Partition  in Two Simple Cases} 

It remains to show that for $l \ge 3$, for every sufficiently small  $\delta>0$ we can choose a $b$ and a $\gamma$ such that the number of bad $C_{2l}$ free graphs  on $V_n$ which are  {\it strongly}  $(\ \delta, \gamma, b)$-{\it compatible}  with  some really canonical $H$-freeness witnessing sequence  $\mathcal F_1,\dots,\mathcal F_{\wpn(H)}$ on  some partition  $\mathcal P=\{Y_1,...,Y_{wpn(H)}\}$  of $V_n$ is a $o(1)$ proportion of the $C_{2l}$-free graphs. As a first step, we will 
show we can find near-witnessing partitions for which deleting a set $Bad$ of  $O(log n)^6$ vertices yields a graph which has a witnessing partition.
In this section we do so for one simple case, and completely handle a second. 

{\bf Case 1:} $l=3$

In this case, we prove the following result yielding a more useful partition

\begin{claim}
\label{C6greatclaim}
There are  $\gamma, \epsilon, Q>0$ such that for almost every $C_6$-free graph $G$  there is a set  $Bad$ of fewer than $Q$ vertices such that $V(G)-Bad$ can be partitioned into a stable set $X_2$  and a set 
$X_1$ which induces a graph of girth 5 in $\overline{G}$. Furthermore $|X_i| < \frac{n}{2}+n^{1-\frac{\gamma}{4}},  s(\overline{G[X_1]})< \lceil 3n^{9/10} \rceil$, $\Delta(\overline{G[X'_i]})< \frac{n}{\log \log n}$   and $\overline{G[X_1]}$ has  at least $\epsilon n^{3/2}$ edges  and  $G[X_1]$ contains a family ${\cal W}_1$ of   $\frac{\epsilon  n}{12}$ disjoint $P_3$s   and a family ${\cal W}_2$ of   $\frac{\epsilon  n}{12}$ disjoint $\overline{P_3}$s. 

\end{claim}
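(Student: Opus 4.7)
The plan is to refine the near-witnessing partition of Claim \ref{nc1} into one whose exceptional set has size at most a constant $Q$, and then read off the structural bounds from our girth-5 estimates.

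By Claim \ref{nc1} (whose proof also covers $l=3$), we may restrict to $C_6$-free graphs $G$ which are strongly $(\delta,\gamma,b)$-compatible with some really canonical witnessing sequence $({\cal F}_1,{\cal F}_2)$ on a partition $\mathcal P=\{Y_1,Y_2\}$. The earlier classification of such sequences gives four cases, and a direct growth-rate comparison shows cases (2)--(4) each contribute $o(|\Forb_{C_6}^n|)$: the $P_4$-free count $(2m)^{2m}=2^{O(m\log m)}$ and the complement-of-matching count $2^{O(m\log m)}$ are dwarfed by Theorem \ref{MStheorem}'s $2^{\Theta(n^{3/2})}$ girth-5 count, so within the common factor $2^{\binom{n}{2}/2+O(n)}$ for edges across the partition those cases lose $2^{\Omega(n^{3/2})}$. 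Hence we may assume $G[X_2]$ is stable and $\overline{G[X_1]}$ has girth 5.

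Next, for each exceptional $v\in Z_1\cup Z_2\cup A$ (total size $o(n^{1-\gamma})$), let $a_v\in B$ be the witness provided by (III) of Theorem \ref{Hstruc.thm}. Based on which $Y_j$ contains $a_v$ I attempt to reassign $v$: into $X_2$ if $v$ has no neighbour in the current $X_2$, into $X_1$ if doing so preserves girth 5 in $\overline{G[X_1]}$ (no triangle or $C_4$ through $v$ is created in the complement). Vertices failing both tests form $\Bad$. The bound $|\Bad|\le Q$ follows by a counting argument in the spirit of Claims \ref{c21}--\ref{c22}: a bad vertex must simultaneously satisfy very specific neighbourhood constraints on both $X_1$ and $X_2$, and for random choices of the cross-partition edges consistent with $C_6$-freeness such simultaneous failures occur with probability $2^{-\Omega(n)}$; summed against the $\binom{n}{|\Bad|}2^{O(n)}$ bookkeeping for the identities and incident edges this yields $o(|\Forb_{C_6}^n|)$ provided $Q$ is chosen sufficiently large.

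The auxiliary properties are then almost immediate. The size bound $|X_i|<n/2+n^{1-\gamma/4}$ is inherited from (IV) of Theorem \ref{Hstruc.thm} and the $o(n^{1-\gamma})$ shift during reassignment. Claims \ref{girth5hd.claim2} and \ref{girth5hd.claim3}, applied to the graph $\overline{G[X_1]}$ on $\approx n/2$ vertices, give $s(\overline{G[X_1]})<\lceil3n^{9/10}\rceil$ and $\Delta(\overline{G[X_1]})<n/\log\log n$ outside a negligible fraction of $C_6$-free graphs, and Theorem \ref{belasam} gives $|E(\overline{G[X_1]})|\ge\epsilon n^{3/2}$. The families ${\cal W}_1,{\cal W}_2$ follow by a greedy extraction: $\Omega(n^{3/2})$ edges together with maximum degree $O(n/\log\log n)$ produce a matching of size $\Omega(n)$ in $\overline{G[X_1]}$, each edge extendable by a common non-neighbour to a $\overline{P_3}$ in $G[X_1]$; similarly $\sum_v\binom{\deg v}{2}=\Omega(n^2)$ cherries, combined with the degree bound, greedily yield $\Omega(n)$ disjoint cherries in $\overline{G[X_1]}$, i.e.\ $\Omega(n)$ disjoint $P_3$s in $G[X_1]$. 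The principal obstacle is the counting step that forces $|\Bad|$ to be only a constant: one must show that the simultaneous failure of both absorption tests is restrictive enough to beat $\binom{n}{Q}$, which is where the near-matching $|N(v)\triangle N(a_v)|<\delta n$ for $a_v\in B$ is used to argue that a bad $v$ contradicts typical $C_6$-freeness behaviour on almost all of its putative neighbourhood.
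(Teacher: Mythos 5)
Your proposal diverges from the paper's proof in two structural ways, and both divergences create real gaps.

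First, the absorption step is not well-defined. You propose to reassign each exceptional vertex $v$ into $X_2$ if it has no neighbour in $X_2$, or into $X_1$ if $\overline{G[X_1\cup\{v\}]}$ still has girth $5$, and to declare $v\in\Bad$ otherwise. But these tests are carried out against a set that changes as you absorb: if $u$ and $w$ each individually pass the $X_1$ test, nothing prevents $\overline{G[X_1\cup\{u,w\}]}$ from acquiring a triangle or $C_4$ through the pair $\{u,w\}$. A greedy absorption therefore depends on the order of processing, may strand vertices that were absorbable earlier, and in any case does not obviously terminate with a valid girth-5 complement plus a stable set. The paper avoids this interaction problem entirely: it takes a \emph{maximal family} $\mathcal A'$ of pairwise disjoint triples of $X'_{3-i}$ each inducing a $P_3$ or $\overline{P_3}$, and sets $\Bad = A\cup A'$; the maximality gives at once that $G[X'_{3-i}-A']$ is a clique or stable set (hence a stable set, since $M_{3-i}$ is empty), with no reassignment and no interactions.

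Second, and more seriously, the counting argument you sketch to force $|\Bad|\le Q$ does not work with your definition of $\Bad$. You claim a vertex that fails both absorption tests must ``simultaneously satisfy very specific neighbourhood constraints'' costing $2^{-\Omega(n)}$ per bad vertex. But failing the $X_2$ test only means $v$ has \emph{at least one} neighbour in $X_2$, and failing the $X_1$ test only means $v$ has some two non-neighbours in $X_1$ that are non-adjacent or share a common non-neighbour. These are extremely common configurations, nothing like probability $2^{-\Omega(n)}$; you cannot beat the $\binom{n}{|\Bad|}2^{O(n)}$ bookkeeping factor with them. The paper gets the exponential cost from a completely different mechanism: it builds test families $\mathcal W_1$ (disjoint $P_3$s) and $\mathcal W_2$ (disjoint $\overline{P_3}$s) of linear size inside $X'_i$, and then shows that every triple in $\mathcal A'$ inducing a $P_3$ or $\overline{P_3}$ must avoid, for \emph{each} element of $\mathcal W_1$ or $\mathcal W_2$, a positive-probability pattern of cross-edges that would complete a $C_6$. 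That is where the factor $(1-1/2^9)^{\epsilon n|\mathcal A'|/12}$ comes from, and combined with Equation~\eqref{yboth} for $A$ this yields the $2^{m-\Omega(n|A\cup A'|)}$ bound and hence the constant bound on $|\Bad|$. Without an analogous per-bad-vertex constraint against a linear-size test structure, your argument does not close.

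The remaining pieces of your proposal (the growth-rate elimination of the non-girth-5 witnessing sequences, the invocation of Theorem~\ref{belasam} and Claims~\ref{girth5hd.claim2} and~\ref{girth5hd.claim3} for the edge count, $s$-bound and degree bound, and the greedy extraction of $\mathcal W_1,\mathcal W_2$) are broadly in line with the paper and could be made precise; the paper derives the extraction of $\mathcal W_1,\mathcal W_2$ from Claim~\ref{girth5hd.claim} (bounding the mass on high-degree vertices) rather than from the $\Delta$-bound, but both routes work. The essential missing content is the precise definition of $\Bad$ that simultaneously (i) yields a valid partition without absorption interactions and (ii) admits an exponential per-vertex cost in the edge count.
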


\begin{proof}

We know that the number of $C_6$ free graphs is $2^{\Omega(n^{3/2})}2^{{n \choose 2}/2}$. If neither $G[X'_1]$ or $G[X'_2]$  contains a  $P_4$ 
then the number of choices for these two graphs is less than $(2n)^{2n}$ so. by  (\ref{yin}) and the fact that $|{\cal A}| =o((\log n)^2)$, the desired result follows.

So,  we can assume that for some $i$, $G[X'_i]$ contains a $P_4$. The definition of strongly compatible  implies that 
$M_i$ is empty and $G[X'_i]$ is the complement of a graph of girth 5
and $G[X'_{3-i}]$ is $P_4$-free. Thus there are fewer than $2n^{2n}$ choices for $G[X'_{3-i}]$. So, applying Theorem \ref{belasam} there is an $\epsilon>0$ such that 
the number of such graphs for which  $\overline{G[X'_i]}$ has fewer than $\epsilon n^{3/2}$ edges is a $o(1)$ proportion of the  $C_6$ free graphs. So, 
we can assume this is not the case.  Claim \ref{girth5hd.claim}
implies that at most $3n/2$ of the edges  of $\overline{G[X'_i]}$ are incident to a vertex of degree greater than $\frac{3\sqrt{n}}{2}$. 
Hence for large $n$ we can greedily construct  both a family ${\cal W}_1$ of   $\frac{\epsilon  n}{12}$ disjoint $P_3$s   of $G[X'_i]$ every vertex of each of which has degree at 
most $\frac{3 \sqrt{n}}{2}$ in $\overline{G[X'_i]}$ and a family ${\cal W}_2$ of   $\frac{\epsilon  n}{12}$ disjoint $\overline{P_3}$s   of $G[X'_i]$ every vertex of each of which has degree at 
most $\frac{3 \sqrt{n}}{2}$ in $\overline{G[X'_i]}$. 

We let ${\cal A}'$ be a maximal set of disjoint triples of $X'_{3-i}$ each inducing a $P_3$ or a $\overline{P_3}$ and $A'$ 
be the set of vertices in these triples. We note that since $M_i$ does not exist, this implies $G[X'_{3-i}-A']$ is a stable set.  
Considering the sets ${\cal W}_1$,${\cal W}_2$,$A$ and $B$, and applying Equation \ref{yboth}
we see that for sufficiently small $\delta$,  the number of choices for  ${\cal A},{\cal A}'$, all the edges incident to $A' \cup A$ and the other edges between parts is at most $(nb)^{|A'|}2^{m+(b+1)n+|A'|\delta n-\Omega(|A|n|)}(1-\frac{1}{2^9})^\frac{\epsilon n|{\cal A}'|}{12}$.  For small enough $\delta$
this is $2^{m+(b+1)n-\Omega(n|A' \cup A|)}$. But now the number of choices for $G[X'_i-A']$ and $G[X'_{3-i}]$ is at most the number of choices of edges for  $G[D_1]$ and $G[D_2]$  for a good graph
with a partition into $D_1$ and $D_2$ with $||D_1|-|D_2|| \le 1$.

Since there are only $2^n$ partitions of $G$,for some constant $B$, the number of bad  $C_6$-free graphs for which $|A|+|A'|>B$ 
is a $o(1)$ proportion of the bad graphs.
Applying Observation  \ref{girth5hd.claim2}  and \ref{girth5hd.claim3} we also obtain that the  number of bad  $C_6$-free graphs for which either $s(\overline{G[X'_i]})> \lceil 3n^{9/10} \rceil$  or $\Delta(\overline{G[X'_i]})> \frac{n}{\log \log n}$ 
is a $o(1)$ proportion of the bad graphs.

Setting $Bad= A \cup A'$,  we are done.

\end{proof}
{\bf Case 2:} $l=4$ and some $M_i$ does not exist. 

In this case rather than get a more useful  partition we show the  number of such graphs is a $o(1)$ proportion of the $C_8$-free graphs. 

We note that since the $G[X'_i]$ are $P_4$-free, (\ref{yin}) implies the number of choices for the  edges of all the $G[X'_i]$ 
is at most $n^{2n+o(n)}$.

We know  $N_i$ exists. Now $C_8$ can be partitioned into 2 $E_3$ and either an edge or a nonedge,
So, Claim \ref{c22new} implies the proportion of $C_8$-free graphs for which some other $N_j$ exists 
is a $o(1)$ proportion of the $C_8$ free graphs. Hence we can assume this is not the case.

Thus, the definition of strongly compatible  ensures that for all $j \neq i$, 
$G[X'_i \cup X'_j]$ does not induce a $P_6$. Hence since $N_i$ exists  and $P_6$ can be partitioned into both 
2$E_3$ and a $E_3$ and a $\overline{P_3}$, we obtain that  for $k\neq i$, by the definition of strong compatability (specifically the maximality of ${\cal A}$),  $G[X'_k]$ is the complement of a matching. 

Furthermore, since $C_8$ can be  partitioned into 
 the disjoint union of two $P_3$s and a stable set of size 2, Claim \ref{c22new}    implies  that the number of bad graphs for which the   union of these two complements of a matching  
  contains the disjoint union of two $P_3$s is a $o(1)$ proportion of the $C_8$ free graphs. So we can assume this is not the case. 
   If either of the complements of the matching  contain $n^{2/3}$ nonsingleton components 
 then it contains $n^{2/3}$ disjoint $P_3$s. So if both complements of matchings contain $n^{2/3}$ nonsingleton components then the proportion of choices for the edges between the partition elements is $2^m2^{-\Omega(n^{4/3})}$. By our bound on the number of choices for the $G[X_i]$, we obtain that  the number of bad graphs for which this occurs is $o(1)$ of the $C_8$-free graphs.

 Otherwise, we can specify the singleton components of these two graphs  in $2^n$ ways. 
 We can  finish off specifying the two complements of a matching by specifying a matching on the remaining at 
 most $\frac{n}{3}+n^{1-\frac{\gamma}{4}}+n^{2/3}$ vertices. 
 Now, we can specify a matching  of size  $l$ by taking an ordering of its vertices, and pairing consecutive vertices.
 Furthermore permuting the order of the edges, and swapping the order of the vertices within the edges shows 
 each matching occurs at least $2^{l/2}(\frac{l}{2})!$ times. So, the total number of such matchings is at most $l^{l/2}$.

We recall that by the definition of strongly compatible, $G[X'_i]$ is $P_4$-free and each component is the disjoint union of a 
clique and a stable set. Since $M_i$ is empty, there is a cover  for $G[X'_i]$ containing at most  $\frac{n}{24}$ vertices,
and ther rest of $X'_i$ lies in one component  $K$ of $\overline{G[X'_i]}$  We can choose K (and hence the edges from K to $G[X'-K]$) in fewer than $2^n$ ways. We can choose the edges 
 within $K$ by specifying the clique and the stable set in $2^n$ ways.
 We can choose the edges of the $P_4$-free graph $G[X'_i-K]$ in at most $(n/12)^{n/12}$ ways. 
 
 Thus, applying (\ref{yin}), the total number of bad graphs of this type are at most $2^{2{n \choose 2}/3}2^{3n}n^{\frac{n}{6}+o(n)+\frac{n}{12}}$.
 Now, there are at least $2^{2{n \choose 2}/3}B_{n/3}= 2^{2{n \choose 2}/3}n^{(1+o(1))n/3}$ $C_8$ free graphs which can be partitioned into 2 
 cliques and a multipartite graph. So, the graphs we are considering are a $o(1)$ proportion of the $C_8$ free graphs and we need treat them no further.

\section{A Stronger Partition: The Remaining Cases}

 \subsection{Finding A Strong Core} 

For each $i$, we define the {\em core $\Co_i$}  of $X_i^\prime$ to be the vertex set of the subgraph of $G[X_i']$ obtained by deleting all of its universal vertices,
and the 
$2n^{\frac{1}{5(l-1)^2}}$ largest anticomponents. We say $X_i^\prime$ has a {\em strong core} if $\Co_i$ has size 
at least $\frac{n}{5(l-1)^2}$. 

\begin{claim}\label{strongcore}
For $l>3$ all sufficiently small $\delta$, and $\gamma$ sufficiently small in terms of $\delta$,  and  for any $b$  and really canonical $C_{2l}$-freeness witnessing sequence $\mathcal F_1,\dots,\mathcal F_{\wpn(H)}$
the following holds. 
There are $o({2^{(1-\frac{1}{(l-1)}){n \choose 2}}}B_{ \lceil \frac{n}{l-1} \rceil}) $  bad $C_{2l}$-free graphs  $G$ for which there is a partition ${\cal P}$ 
such that  $G$ is   {\it strongly} $( \delta, \gamma, b)$-{\it compatible} with  $\mathcal F_1,\dots,\mathcal F_{\wpn(H)}$ on  $\mathcal P$ and no $X_i^\prime$ has a strong core. 
\end{claim}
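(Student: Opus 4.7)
The plan is to leverage the rigidity of really canonical witnessing sequences, as classified in Claim~\ref{c11} and the analogous unlabelled claims for $l=4,5$ earlier in the same subsection. In each case one distinguished ``rich'' family $\mathcal F_{i^*}$ carries all the structural freedom (complements of disjoint unions of stars and triangles when $l>5$; of stars and cliques when $l=5$; or, in the main case, graphs whose complement's components are joins of a clique and a stable set when $l=4$), while every other $\mathcal F_i$ consists only of cliques (plus possibly $E_2$). Since by (IV) and strong compatibility we have $|X_i'| \geq n/(l-1)-n^{1-\gamma/4}-o((\log n)^2) \gg 2$, the constraint $G[X_i'] \in \mathcal F_i$ forces $G[X_i']=K_{|X_i'|}$ whenever $i \neq i^*$; every vertex of such an $X_i'$ is universal, so $\Co_i = \emptyset$, and the hypothesis ``no $X_i'$ has a strong core'' reduces to $|\Co_{i^*}| < n/(5(l-1)^2)$.

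I would then count the admissible $G[X_{i^*}']$ by specifying: (a) the set $U \subseteq X_{i^*}'$ of universal vertices (at most $2^{|X_{i^*}'|}$ choices); (b) an ordered partition of $X_{i^*}' \setminus U$ into the $k := 2n^{1/(5(l-1)^2)}$ largest non-singleton anticomponents $V_1, \ldots, V_k$ together with a ``core'' set of size at most $n/(5(l-1)^2)$ (at most $(k+1)^{|X_{i^*}'|}$ choices); (c) the $\mathcal F_{i^*}$-structure inside each $V_j$ (in total at most $2^{O(n)}$, e.g.\ $|V_j|+1$ choices per star-or-triangle for $l \ge 5$, or $2^{|V_j|}$ per anticomponent for $l=4$); and (d) the $\mathcal F_{i^*}$-structure on the core (at most $2^{n/(5(l-1)^2)} B_{n/(5(l-1)^2)}$ by Lemma~\ref{finsize}). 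Using $\log B_m \sim m \log m$ from Observation~\ref{obs:bruce4}, the logarithm of the product of these bounds is dominated by (b) and (d): it is at most $(n \log n) \cdot l/(5(l-1)^3) + O(n)$. Since $l/(5(l-1)^2) < 1$ whenever $l \ge 4$, this is strictly less than $\log B_{\lceil n/(l-1) \rceil} \sim (n \log n)/(l-1)$ by an additive $\Omega((n \log n)/(l-1))$, so the number of admissible $G[X_{i^*}']$ is $B_{\lceil n/(l-1) \rceil} \cdot 2^{-\Omega(n \log n)}$.

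Multiplying by the remaining degrees of freedom --- $2^{O(n)}$ partitions $\mathcal P$; the bound $o(2^{\sqrt{\delta} n^{2-\gamma}})$ from \eqref{upperv} on choices of $Z_i$ and edges of $G[Y_i]$ incident with $Z_i$; a constant number of choices of $G[X_i']$ for each $i \ne i^*$; and at most $2^m$ choices of edges between classes --- yields a total of $o(2^{(1-1/(l-1))\binom{n}{2}} B_{\lceil n/(l-1) \rceil})$. For $l=4$ there is also a secondary really canonical witnessing sequence whose three families are ``cliques and stable sets'', ``cliques and possibly $E_2$'', and ``complements of matchings''; even without invoking the core hypothesis, the at most $m^{m/2}$ complements of matchings on $m$ vertices mean that the total number of strongly compatible graphs of this type is at most $2^{(2/3)\binom{n}{2}+O(n)} \cdot n^{n/6 + O(1)} = o(2^{(2/3)\binom{n}{2}} B_{\lceil n/3 \rceil})$, absorbed into our target. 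The main technical obstacle is the careful comparison between $(k+1)^{|X_{i^*}'|} \cdot B_{n/(5(l-1)^2)}$ and $B_{\lceil n/(l-1) \rceil}$: the exponent $1/(5(l-1)^2)$ built into the threshold $k$ in the definition of $\Co_i$ is chosen precisely so that this inequality holds with an exponent gap of order $n \log n$ whenever $l \ge 4$.
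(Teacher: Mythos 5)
Your approach diverges from the paper's and has two genuine gaps, one conceptual and one quantitative.

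\emph{Gap 1: $G[X_i']$ need not lie in $\mathcal F_i$.} You write that ``the constraint $G[X_i']\in\mathcal F_i$ forces $G[X_i']=K_{|X_i'|}$ whenever $i\neq i^*$,'' but compatibility (via Theorem~\ref{Hstruc.thm}(I)) only gives $G[X_i]\in\mathcal F_i$, where $X_i$ is the ``good'' subset of $Y_i$. The set $X_i'=Y_i-A=(X_i\cup Z_i)-A$ includes the up to $n^{1-\gamma}$ vertices of $Z_i-A$, on which the adjacency within $Y_i$ is constrained only by (III) (the $\delta n$-closeness to a template vertex in $B$) and by the strong-compatibility conditions (1)--(3), not by membership in $\mathcal F_i$. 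So $G[X_i']$ for $i\neq i^*$ is merely a $P_4$-free graph with no $4$-vertex $2$-edge subgraph, not literally a clique, and it is not true that $\Co_i=\emptyset$ automatically. The paper sidesteps this entirely: it uses only strong-compatibility condition~(1) (which constrains $G[X_i']$ directly and uniformly for every $i$) to conclude that each anticomponent of $G[X_i']$ induces the disjoint union of a clique and a stable set, and then applies the universal-vertex / large-anticomponent / core specification to \emph{every} part $X_i'$, giving the factor $2^n(2t+1)^n f_3^*(n/(5(l-1)^2))^{l-1}$. This makes the classification Claims of Section~2.1 unnecessary here and eliminates both of your case distinctions (your $l=4$ ``secondary witnessing sequence'' handling is not needed in the paper's argument for the same reason).

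\emph{Gap 2: the \eqref{upperv} factor is too large.} Your final product includes the bound $o(2^{\sqrt\delta n^{2-\gamma}})$ from \eqref{upperv}. But the slack you have to work with is only the Bell number, i.e.\ $\log B_{\lceil n/(l-1)\rceil}=\Theta(n\log n)$, and for any fixed $\gamma<1$ we have $n^{2-\gamma}\gg n\log n$, so the stated total does not come out to $o(2^{(1-1/(l-1))\binom n2}B_{\lceil n/(l-1)\rceil})$. The paper avoids this: it never invokes \eqref{upperv} in this proof. Edges inside $Y_i$ incident with $Z_i-A$ are already accounted for inside the $G[X_i']$ specification (since $Z_i-A\subseteq X_i'$), edges incident with $A$ are bounded via \eqref{yboth} by $2^{m_0+(b+1)n-\beta_H an}$, and the remaining $m_1$ cross-class pairs contribute $2^{m_1}$, so the only excess over $2^m$ is $2^{O(n\log n)}$, which is absorbed by the $B_{\lceil n/(l-1)\rceil}$ target. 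In your scheme, by contrast, invoking \eqref{upperv} both double-counts these edges and, more importantly, injects a factor that overwhelms the $2^{-\Omega(n\log n)}$ savings you correctly extracted from the strong-core hypothesis.

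The exponent arithmetic you carry out for (b)--(d) (getting a logarithmic contribution of order $\frac{l}{5(l-1)^3}n\log n<\frac{n\log n}{l-1}$) is sound and parallels the paper's $\frac{3}{10(l-1)}n\log n$ bound; the problem is upstream, in the reduction and in the choice of which bound to use for $Z_i$.
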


\begin{proof}
By  (1) in the definition of strongly  compatible, if  for some $l>3$, $G$ is strongly  $( \delta, \gamma, b)$-compatible with $\mathcal F_1,\dots,\mathcal F_{\wpn(H)} $ on $\mathcal P$ 
then each of the  components in the complement of a $G[X'_i]$  is, in $G$, the disjoint union of a clique and a stable set. So if $X'_i$ 
does not have a strong core then we can specify  the graph induced by each $X^\prime_i$ by: 
(a)  choosing its   universal vertices; (b) specifying for 
each of its remaining vertices whether it lies in any of the $t=\lceil 2n^{\frac{1}{5(l-1)^2}}\rceil$ largest anticomponents,
and (if so) which anticomponent it lies in and whether it is in the clique or the stable set of which that anticomponent is the disjoint union,  ($2t+1$ choices per vertex); and (c) specifying the graph 
induced by the core, for which there are at most $f_3(\frac{n}{5(l-1)^2})$ choices. 
So the number of choices 
for the edges of $G$  within the $X^\prime_i$ that do not have a strong core  for a specific choice of ${\cal P}$  is at most  
\begin{equation}\label{check1}
 2^n(2t+1)^nf_3\left(\frac{n}{5(l-1)^2}\right)^{l-1}. 
\end{equation}

Now, $B_j$ is at most $j^j$
and so (by Lemma \ref{finsize}) $f_3(j)$ is at most $2^jj^j$. 
Thus \eqref{check1} is at most 
$$2^{n\log t+\frac{n}{5(l-1)}\log(\frac{n}{5(l-1)^2})+o(n\log n)}
=2^{\frac{n}{5(l-1)^2}\log n+\frac{n}{5(l-1)}\log  n+o(n\log n)}.$$.

Furthermore,  
by \eqref{yboth} for a fixed choice of $A$, when the size $a$ of ${\cal A}$ is at most   $n^{1-\gamma} $.there are at most $2^{m+(b+1)n-\beta_H an}$  choices for the edges incident with $A$, and the other edges between the partition classes. 
Thus  summing over all choices of partitions $\mathcal P$  and ${\cal A}$,
the number of bad $H$-free graphs we are counting is $o( 2^{(1-\frac{1}{(l-1)}){n \choose 2}+\frac{3n \log n }{10(l-1)}})$.

But $ B_{j}\ge j^{(1+o(1))j}=2^{(1+o(1))j\log j}$ so the number of $C_{2l}$ free graphs on $n$ vertices is 
at least $2^{(1-\frac{1}{l-1}){n \choose 2}} 2^{(\frac{1}{l-1}+o(1))n\log n}$.
So we are done.


\end{proof}

\subsection{Exploiting A  Strong Core}

In this section we complete the proof that for any $l$, we can find near-witnessing partitions for which deleting a set $Bad$ of  $O(\log n)^6$ vertices yields a graph which has a witnessing partition.
We focus on $l>4$  in this section, as we handled $l=3$ in the last section. 
I.e. we show:

 \begin{claim}
 \label{greatclaim} 
For $l>3$  there is $\gamma>0$ such that  in  almost every $C_{2l} $-free graph $G$  there is a set  $Bad$ of fewer than $(\log n)^6$ vertices such that $V(G)-Bad$ can be partitioned  as follows:
\begin{enumerate}
\item if $l=4$ into two cliques  and a graph which is the join of graphs which are the disjoint union of a clique and a stable set. 
\item if $l=5$ into three cliques and a graph which is the join of  graphs which are stable sets or the disjoint union of a clique and a vertex. 
\item if $l>5$ into  $l-2$  cliques and a graph which is the join of  graphs which are stable sets of size three or the disjoint union of a clique and a vertex. 
\end{enumerate} 
Furthermore all the partition elements have size within $2n^{1-\frac{\gamma}{4}}$ of $\frac{n}{l-1}$ and  the partition element which is not a clique contains a core which contains $\frac{n}{l-1} -\frac{3n}{\sqrt{\ln \ln n}}$ vertices lying in  (nonsingleton) components of size at most  $(\log n)^3$.  
\end{claim}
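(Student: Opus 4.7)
The plan is to build on Claim \ref{strongcore} together with the structural constraints from strong compatibility and careful counting against the baseline of at least $2^{(1-\frac{1}{l-1})\binom{n}{2}}\,B_{\lceil n/(l-1) \rceil}$ good graphs. By Claim \ref{strongcore}, for sufficiently small $\delta$ and $\gamma$ we may restrict to bad graphs in which at least one class $X_{i^*}'$ has a strong core. A parallel counting estimate, comparing $f^*_j(n)^2$ against $B_n$ via Lemma \ref{finsize} and Observation \ref{obs:bruce4}, shows that bad graphs having two classes with strong cores form only a $o(1)$ fraction, so we may assume $i^*$ is unique.

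For each $j \neq i^*$ we must show that $G[X_j']$ becomes a clique after deleting at most $(\log n)^6$ vertices. Since $X_j'$ does not have a strong core, removing its universal vertices together with its $2n^{1/(5(l-1)^2)}$ largest anticomponents leaves fewer than $n/(5(l-1)^2)$ vertices; to sharpen this to $(\log n)^6$ we invoke the strong compatibility constraints. For $l > 4$, $G[X_j' \cup X_k']$ is $P_6$-free for each $k \neq j$; and by its structural form the strong core of $G[X_{i^*}']$ contains $\Omega(n)$ disjoint prescribed triples ($P_3$'s, $\overline{P_3}$'s, or $E_3$'s depending on $l$). For each non-universal vertex $v$ in $G[X_j']$ outside the already-deleted portion, $v$ lies in an anticomponent of size at least two, and then the between-edges from $\{v\}$ together with any non-neighbor $v'$ of $v$ in $X_j'$ must avoid extending a core-triple into a forbidden configuration. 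This delivers an exponential penalty per defect vertex in the style of Claim \ref{claimY} and Claim \ref{c22}. Summing over choices of the defect set gives contribution $o(2^{(1-1/(l-1))\binom{n}{2}}B_{\lceil n/(l-1)\rceil})$.

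For $G[X_{i^*}']$ itself, the complement of $\Co_{i^*}$ has, by strong compatibility, the prescribed form (disjoint union of stars and triangles for $l > 5$, of stars and cliques for $l = 5$, or of joins of cliques and stable sets for $l = 4$). The partition of $\Co_{i^*}$ into anticomponents is a set partition of roughly $n/(l-1)$ elements, so Observations \ref{obs:bruce5} and \ref{obs:bruce9} together with Lemma \ref{finsize} imply that on all but a $o(1)$ fraction of such partitions, there are at most $n/\sqrt{\ln n}$ anticomponents and at most $n/\sqrt{\ln \ln n}$ vertices lie in anticomponents of size exceeding $(\log n)^3$. We set $Bad$ to be the union of: the atypical set $A$ (of size $o((\log n)^2)$ by strong compatibility); the universal vertices and the deleted largest anticomponents of $X_{i^*}'$; the non-universal vertices in each $X_j'$ with $j \neq i^*$; and the vertices of $\Co_{i^*}$ lying in anticomponents of size exceeding $(\log n)^3$. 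Each component of $Bad$ has size at most $O((\log n)^5)$, giving $|Bad| \le (\log n)^6$, and the size bounds $|X_i'| \le n/(l-1) + 2n^{1-\gamma/4}$ follow immediately from Theorem \ref{Hstruc.thm}(IV).

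The main obstacle is the second step: rigorously establishing the exponential per-vertex penalty for defect vertices in the would-be-clique classes. This depends crucially on the interplay between the triples in the core of $X_{i^*}'$ (supplying the forbidden subgraph extensions) and the between-class edges; we must verify that the $n^{O(|\text{defect}|)}$ choices for the neighborhoods of defect vertices are swamped by the $2^{-\Omega(n)}$-per-vertex structural savings. Once this quantitative trade-off is secured, the core-structure claim follows from the $f^*_i$ asymptotics together with Observations \ref{obs:bruce5} and \ref{obs:bruce9}.
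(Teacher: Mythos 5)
Your outline captures the general strategy of Claim~\ref{strongcore} plus exponential counting, but it departs from the paper's route in ways that create real gaps, one of which you acknowledge yourself.

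First, the step where you argue via a ``parallel counting estimate'' that at most one class has a strong core is both unnecessary and the wrong mechanism. The uniqueness of $i^*$ comes directly from the witnessing-sequence structure: Claim~\ref{c11} forces all but one of the families $\mathcal{F}_i$ to consist of cliques (and possibly $E_2$), so $G[X_i]$ is already a clique for those $i$, and $X'_i = (X_i\cup Z_i)-A$ differs from the clique only by at most $|Z_i| \le n^{1-\gamma}$ vertices --- which is too few to give a strong core of size $n/(5(l-1)^2)$. There is no ``two strong cores'' case to exclude, and appealing to $f^*_j(n)^2$ vs.\ $B_n$ does not address the right quantity.

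Second, your description of $\Bad$ is inconsistent with the required bound. You propose to put ``the universal vertices and the deleted largest anticomponents of $X_{i^*}'$'' into $\Bad$, but these can comprise a positive fraction of $n$: the core is defined by deleting them, and nothing forces them to number fewer than $(\log n)^6$. In the paper these vertices remain in the non-clique partition element; the claim only asserts that the core (living inside that element) has the stated structure, not that its complement within $X_{i^*}'$ is small. Likewise ``the non-universal vertices in each $X_j'$'' is $O(n^{1-\gamma})$ a priori and must itself be driven down to $\mathrm{polylog}$ by the very counting argument you flag as incomplete.

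Third --- and this is the part you already identify as the ``main obstacle'' --- the exponential per-vertex penalty cannot be run directly against arbitrary between-class edges. The paper's proof constructs two further atypical sets $A'$ and $A''$ with carefully tuned maximality conditions (roughly: $A'$ catches $6$-sets whose neighbourhoods on the core of $X'_{i^*}$ are unbalanced, $A''$ catches single vertices with no ``splitting'' nonadjacent pair in the core), bounds each of these via Chernoff/auxiliary-graph arguments (Claim~\ref{A'claim}), and only then derives the $P_6$ contradiction through an explicit six-vertex configuration $P \cup \{c_K, w, u\}$ plus one common nonneighbour. Without $A'$ and $A''$ the claim that a defect vertex $v$ and its nonneighbour $v'$ ``must avoid extending a core-triple into a forbidden configuration'' does not give a clean $2^{-\Omega(n/\mathrm{polylog})}$ factor, because $v$ might be adjacent to nearly all of the core, or the core components relevant to $v$ might be too few. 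The paper's two auxiliary deletions exist precisely to eliminate those exceptional configurations before the penalty is computed. In short: the broad plan (strong core $\to$ forbidden-$P_6$ penalty $\to$ Bell-number baseline) is right, but the proposal is missing the intermediate combinatorial scaffolding that makes the penalty argument valid and that keeps $\Bad$ of polylogarithmic size.
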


\begin{proof}

We have already seen that we can focus on   bad $C_{2l}$-free $G$  which are  strongly $( \delta, \gamma, b)$-compatible  with some really canonical $C_{2l}$-freeness.  
witnessing sequence $\mathcal F_1,\dots,\mathcal F_{l-1}$ on some   partition  ${\cal P}$ 
for which  (i) there is an $X_{i^*}$ which has a strong core.
We note that for $l>4$ all but one of the $G[X_i]$ induce a clique. 
When $l=4$, we can restrict our attention to partitions where each $M_i$ exists by the result of the last section. 
Hence we can also insist that:   (ii) for all but one $i$,  $X_i$ induces a clique, (iii) if $l=4$ then 
every $M_i$ exists. We say such graphs are {\it $i^*$- very strongly}  $( \delta, \gamma, b)$-compatible   with $\mathcal F_1,\dots,\mathcal F_{l-1}$ on ${\cal P}$. 
 
 To count these graphs, for each choice of  $C_1$, we  count the number of choices of  $C_2$ which extend it to such a graph. 
 
 We note that every component of the core of $X'_{i^*}$ has at most $\frac{n^{1-\frac{1}{5(l-1)^2}}}{l-1}$ vertices. 

We construct  $A'$ by repeatedly adding sets $S$ of size at most $6$ which are disjoint from $X'_{i^*} \cup A' \cup A$ and such that 
the number of vertices of the core of  $X'_{i^*}$ which have some specific neighbourhood on $S$ is less than $\frac{n}{2^{12}(l-1)^2}$,

We continue until either $A'$ is maximal or $|A' \cup A| \ge \log n)^2$.  We can  and do make this choice unique by always adding the 
lexicographically smallest choice. Applying the Chernoff  bounds we obtain 

\begin{claim}
\label{A'claim}. Letting $m'$ be the number of 
pairs of vertices one in $A'$ and the other in some $X'_i$,  there is a $\zeta>0$  such that the number of choices for $A'$ and the edges of $G-A$ leaving $A'$ 
is $2^{m'+(b+1)n-\zeta |A'|n}$. 
\end{claim}

Since there are $2^{O(n \log n)}$ edges in a $P_4$ free graph, combining this with Claim \ref{claimY}(c), we have that 
the number of bad $H$-free graphs $i^*$-extremely $(\delta, \gamma, b)$ very strongly compatible  with $\mathcal F_1,\dots,\mathcal F_{\wpn(H)}$ on some ${\cal P}$ for which $|A+A'|>(\log n)^2/2$  
is $o(2^m)$,  so we can restrict our attention to graphs with  $|A+A'|<(\log n)^2/2$  and hence with  $A'$ maximal.
We let $X''_i=X'_i-A'$. 

We note that, by the definition of strong compatability, for every $i \neq {i^*}$,  the nonsingleton components of the complement of $G[X''_i]$ induce in $G$ the disjoint union of a clique and a non-empty stable set. 
Thus for any such component $K$ there is a vertex $c_K$ which is nonadjacent  in $G$   to all the other vertices of $K$. 


For every $i \neq i^*$, We let ${\cal K}_i$ be the set of nonsingleton components of  $G[X''-i]$
containing less than $\frac{2n}{5wpn(H)}$ vertices. 
We note that for any $K$ in ${\cal K}_i$,  since $c_K \not \in A'$ it is non-adjacent to at least $\frac{n}{2^{12}(l-1)^2}$ vertices of the core of $X'_{i^*}$.
This means there are at least $z=\frac{n^{1/5(l-1)^2}}{2^{12}(l-1)}$ components  of this core containing a non-neighbour of $c_K$
and hence  a set ${\cal T}$ of $z$ disjoint pairs of  nonadjacent vertices of the core each containing a nonneighbour of $c_K$.

Suppose first there is a  pair $P$ of ${\cal T}$ and a  vertex $w$ of $K-c_K$ such that $G[P \cup \{w,c_K\}]$ has exactly one edge.
Now since $A$ is maximal  and the number of common neighbours of $c_K$ and $w$ in $X''_i$ exceeds $\frac{2n}{5wpn(H)}$ 
there is a vertex $u$ of $X''_i$ which sees $w$ and $c_K$ but none of $P$. Since $A'$ is maximal there are at least $\frac{n}{2^{12}wpn(H)}$ 
vertices of $X'_{i^*}$ which miss all of $c_K,u,w$. Since $P$ is in the core of $X'_{i^*}$, most, and more importantly for us, at least one  of these 
vertices see both elements of $P$.
But we have obtained a $P_6$ in $X'_i \cup X'_{i^*}$, a contradiction. So this does not occur. 

It follows that for every vertex of $K-c_K$,  and every $P \in {\cal T}$, there is a neighbourhood on $P$ which is forbidden. 
Furthermore, the number of choices for $K$ (which determines the $K$ to $X''_i-K$ edges)  and the edges within it  is less than ${n \choose |K|}2^{|K|}$.
Since $|{\cal T} | \ge z$, for large $n$,  given a choice for $K$, the number of choices for   the edges from $K$ to the vertices in the pairs of ${\cal T}$ is at most 
$2^{2|{\cal T}||K|-|K|n^{1/6(l-1)^2}}$.  Thus, the number  of the bad graphs we are considering where there are more than 
$n^{1-\frac{1}{7(l-1)^2}}$ vertices in a ${\cal K}_i$ is  a $o(1)$ proportion of the good graphs.  Thus, we can restrict our attention to graphs where this is not  the case.

Now, there  are at most two non-singleton components of $G[X''_i]$ which are not in ${\cal K}_i$, 
 so we can partition the remainder of  each $X''_i$ up into its components 
in at most $3^n$ ways and having done so there are at most $2^n$ choices for the graph we have created.
So we see that in total the number of choices for the graph outside $G[X'_{i^*}]$ is at most $2^m2^{(b+4)n}$. 

This  implies that  we need only consider  graphs such that  $G[X'_{i^*}]$ has  a  much stronger structure.
For any partition of $l$ into parts, there at most $k$ graphs whose components in the complement 
are the given parts, and such that each induces the join of a clique and a stable set. So, Observations \ref{obs:bruce4}, \ref{obs:bruce5}   and \ref{obs:bruce9} imply that  for some constant $\alpha$ the subset of the bad graphs we are  counting 
 where $G[X'_{i^*}]$ has fewer than $\frac{n}{\alpha \ln n}$  nonsingleton components, more than $\frac{n}{\sqrt{\ln n}}$  singeleton components, or   more than $\frac{n}{\sqrt{\ln  \ln n}}$ vertices in  components of size exceeding $(\ln n)^3$  is a 
 $o(1)$ proportion of the $H$-free graphs. 
 
 Thus we can restrict our attention to counting bad graphs  such that $G[X'_{i^*}]$ has  more  than $\frac{n}{\alpha \ln n}$  
 nonsingleton components, fewer than   $\frac{n}{\sqrt{\ln n}}$  singleton components  and fewer  than $\frac{n}{\sqrt{\ln \ln n}}$ vertices in  components of size exceeding $(\ln n)^3$.  This   implies that $i^*$ must be the unique $i$ for which $X_i$
  is not a clique. It also implies that the  core of $X'_{i^*}$ contains  at least  $\frac{n}{l-1}-\frac{3n}{\sqrt{\ln \ln n}}$ vertices which are in   (nonsingleton) components 
  of size at most $(\log n)^3$. Thus,  since $A'$ is maximal,  for every $i \neq i^*$, for every vertex  $v$ of $X''_i$ there must be a family ${\cal T}$ of  at least $\frac{n}{2^{13}(l-1)(\log n)^3}$  nonadjacent pairs of vertices  of the core of $X'_{i^*}$ such that $v$ sees at most  one vertex of the pair.

 
We construct  $A''$ by repeatedly adding  a  vertex $v$ outside $X'_{i^*} \cup A \cup A'$ for which there is no pair of nonadjacent vertices of the
core of $X'_{i^*}$ exactly one of which is a neighbour of $v$.We note that we can specify the edges from $v$ to the core of $X'_{i^*}$ by specifying the choices of the edge to $c_J$ for each 
component $J$ of its complement, so by the lower bound on the size of the core, and the upper bound on the number of its complement's components, there are $2^{o(n)}$ choices for the edges  from any $v$ added to $A''$ to $X'_{i^*}$. 
It follows that the number  of  bad graphs  we are counting for which $|A''|=\omega(1)$ is a $o(1)$ proportion of all $H$-free graphs and for some $Q$ we can and do  only count graphs 
for which $|A''|<Q$.

We let $X'''_i=X''_i-A''$.

 We can now make the same argument as above to show that  for  any $i \neq {i^*}$ and nonsingleton component $K$ of $G[X'''_i]$ with at most $\frac{2n}{5(l-1)}$ vertices,  the number of choices for $K$ and  the edges from $K$ to the vertices in the pairs of $P$ is at most 
$2^{2|{\cal P}||K|-\frac{n|K|}{(\log n)^4}}$.  Thus, the proportion of the bad graphs we are considering where there are more than 
$(\log n)^5$ vertices in a ${\cal K}_i$ is $o(1)$.

 It remains to consider the possibility that for some $i \neq i^*$ there is a component of $G[X'''_i]$ containing more than $\frac{2n}{5wpn(H)}$ vertices. 
  By the maximality of $A''$, there is  a pair $P$ of   nonadjacent pairs of vertices  of the core of $X'_{i^*}$ such  that $c_K$ sees exactly  one vertex of the pair.
  Since $A$ is maximal   there are at least $\frac{n}{50000(l-1)}$ nonneighbours of $c_K$ in $Y_i$
  which  have any specific neighbourhood on $P$. Most of these lie in  the clique $G[X_i]$ which has $\frac{(1+o(1))n}{l-1}$ vertices.
  So, we can choose adjacent nonneighbours of $c_K$ in this clique,   one of which $u$  sees none of $P$, and the other of which $v$  sees only one vertex  of $P$, that   not seen by $c_K$. 
  Since   $A'$ is maximal there are at least $\frac{n}{2^{12}(l-1)^2} $vertices of $X'_{i^*}$ which see none of $\{c_K,u,v\}$. But now 
  $G[X'_{i^*} \cup X''_i]$ contains a $P_6$,  a contradiction.

Now, setting $Bad= \cup_{i \neq i^*}{\cal K}_i \cup A \cup A' \cup A''$, we are done.  
\end{proof}
 
\section{Counting  Good Graph-Partition Pairs}
\label{counting} 

It remains to show that the number of bad $C_{2l}$-free graphs which have a partition 
as set out in Claims \ref{greatclaim} and \ref{C6greatclaim},is of smaller order than the number of good $H$-free graphs.
To some extent we will do this partition by partition, showing  the number of good graphs for which the given partition
certifies their goodness far exceeds  the number of bad graphs for which the given  partition certifies their near goodness (to define the  precise 
partition we consider for a bad graph, we will need to specify how to split $Bad$ between the partition elements which we do 
below).  The difficulty is that some good graphs may permit more than one good 
partition and hence we may overcount the good graphs (We don't care if we overcount the bad graphs).

We show now that for certain witnessing families no overcounting occurs. The key to doing so is to note that if 
for some  partition ${\cal P}$ 
we make a choice of $C_1$
which can be combined  with a choice of $C_2$ 
to yield a good $H$-free graph  such that ${\cal P}$ is an $H$-freeness witnessing partition certified by  $\mathcal F_1,\dots,\mathcal F_{l-1}$ then every choice of $C_2$ yields such a graph.

I.e. letting $m$ be the number of edges between the partition elements, there are $2^m$ choices for $C_2$ which can be combined with this
choice of $C_1$ to yield a good graph compatible with the partition.

\begin{claim}
\label{ourclaim2}
Let $p\ge1$ be fixed, and suppose that ${\mathcal F}_1,...,{\mathcal F}_p$ 
are families such that, for sufficiently large $l_0$,  we have 
\begin{enumerate}
\item either the graphs in
${\mathcal F}_1$  are all $P_4$-free or have girth at least five, and 
\item 
for $i>1$ and $l\ge l_0$, every graph in ${\mathcal F}_i$ on $l$ vertices  has minimum degree at least $\frac{31l}{32}+1$;
\end{enumerate}
Then  for every $\mu>0$,
for all sufficiently large $n$, the number of (graph, partition) pairs consisting of  
of a graph $G$ on $V_n$  and a partition of $V_n$ into  $X_1,...,X_p$ such that $G[X_i]$ is in ${\mathcal F}_i$ and 
$|X_i-\frac{n}{p}|< n^{1-\mu}$ for each $i$ is $\Theta(1)$ times the number of graphs on $V_n$ which  permit such a partition. 
\end{claim}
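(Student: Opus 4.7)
The plan is to prove that every graph $G$ admitting a valid partition admits only $O(1)$ such partitions; the claim follows immediately because
$$\#\{(G,P):P\text{ valid for }G\}=\sum_G \#\{\text{valid partitions of }G\}$$
is then sandwiched between $\#\{G:G\text{ has a valid partition}\}$ and a constant times this quantity.

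Given two valid labeled partitions $P=(X_1,\dots,X_p)$ and $P'=(X'_1,\dots,X'_p)$ of the same $G$, hypothesis (2) forces each $X_i$ (for $i>1$) to be a near-clique: the complement of $G[X_i]$ has maximum degree at most $|X_i|/32-1$. First I would establish a matching $\sigma$ of the $\{2,\dots,p\}$-indexed parts of $P$ and $P'$ by showing that for each $i>1$ there is a unique $\sigma(i)>1$ with $|X_i\cap X'_{\sigma(i)}|\ge (1-1/p)|X_i|$. Uniqueness and existence both come from a degree counting argument: if $X_i$ were split substantially between two different near-clique parts $X'_{j_1},X'_{j_2}$ (with $j_1,j_2>1$), then a vertex $v\in X_i\cap X'_{j_1}$ would simultaneously need $\tfrac{31}{32}|X_i|$ neighbors in $X_i$ (most outside $X'_{j_1}$) and $\tfrac{31}{32}|X'_{j_1}|+1$ neighbors in $X'_{j_1}$, and since $X'_{j_1}$ and $X_i\setminus X'_{j_1}$ are essentially disjoint this quickly exceeds $n-1$; similarly a near-clique part of $P$ cannot overlap substantially with $X'_1$, as $\mathcal F_1$, being either $P_4$-free or of girth at least five, is too sparse to accommodate such a structure.

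I would then bound $|X_i \bigtriangleup X'_{\sigma(i)}|$ by an absolute constant. For $v\in X_i\setminus X'_{\sigma(i)}$ with $i>1$, $v$ has at least $\tfrac{31}{32}|X_i|-O(1)$ neighbors in $X_i\cap X'_{\sigma(i)}\subset X'_{\sigma(i)}$. The vertex $v$ lies in $X'_k$ for some $k\ne\sigma(i)$; if $k>1$ we obtain the same degree-blowup contradiction as in the matching step, while if $k=1$ the vertex $v$ sits in $X'_1$ together with its many neighbors from $X_i\cap X'_1$, which form a near-clique of size $\approx n/p$ inside $G[X'_1]$, forbidden by $\mathcal F_1$ (either the induced neighborhood exhibits a $P_4$ or a short cycle). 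Hence only $O(1)$ vertices can be misaligned between the two partitions.

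Combining these steps, any two valid partitions of $G$ differ by a permutation of indices (and in fact $X_1$ must correspond to $X'_1$) together with $O(1)$ vertex moves, giving $O(1)$ valid partitions per graph. The main obstacle is the $k=1$ case above: $\mathcal F_1$ has no minimum-degree lower bound, so the contradiction must be derived from the global structural constraints it imposes, using Seinsche's theorem for the $P_4$-free case and the sparsity bounds of Claim~\ref{girth5hd.claim} for the girth-$5$ case to rule out a dense near-clique neighborhood of $v$ inside $G[X'_1]$.
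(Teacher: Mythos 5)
Your central lemma --- that every graph admitting a valid partition admits only $O(1)$ of them --- is false, and this sinks the whole plan. Take $\mathcal F_1$ to be the family of all $P_4$-free graphs and $\mathcal F_2=\dots=\mathcal F_p$ the family of all cliques; these satisfy the hypotheses of the claim. Then $G=K_n$ admits \emph{every} partition of $V_n$ into parts of size $\frac np\pm n^{1-\mu}$ as a valid partition (each part is a clique, hence $P_4$-free and of minimum degree $l-1$), so it has exponentially many valid partitions. Your degree-counting step also fails quantitatively: if $X_i$ is split evenly between $X'_{j_1}$ and $X'_{j_2}$, a vertex $v\in X_i\cap X'_{j_1}$ is only forced to have about $\frac{31}{32}\cdot\frac np+\frac{15}{32}\cdot\frac np=\frac{23n}{16p}$ neighbours, which is below $n-1$ for every $p\ge2$, so no contradiction arises --- and none can, by the example above.

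The paper avoids this by not attempting to bound the number of valid partitions of an arbitrary graph. It introduces a typicality property (P*) of the \emph{pair} (graph, partition): cross-part pairs of vertices have few common neighbours, within-part pairs in the near-clique parts have many, and each $X_i$ with $i\ge2$ together with any two outside vertices induces a $P_4$ and a cycle of length at most four. It then proves two things: $(\alpha)$ a graph has at most $O_p(1)$ partitions that are both valid and satisfy (P*) --- this is where the common-neighbour separation and the $P_4$/girth hypothesis on $\mathcal F_1$ are actually used; and $(\beta)$ for each fixed partition, a Chernoff bound over the uniformly random choice of cross-edges shows that a constant fraction of the compatible graphs satisfy (P*). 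Chaining these gives: the number of graphs with a valid partition is at least the number with a valid-and-(P*) partition, which by $(\alpha)$ is at least $\frac1{(p+1)!}$ times the number of (P*) pairs, which by $(\beta)$ is at least a constant times the number of all valid pairs. Graphs like $K_n$ with many valid partitions but no (P*) partition cause no harm, because the lower bound is routed through the (P*) pairs. To salvage your approach you would need to show that graphs admitting $\omega(1)$ valid partitions contribute a vanishing fraction of the pairs being counted, which is essentially what (P*) is engineered to do.
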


\begin{proof}
We focus on  (graph, partition) pairs where the following property holds:

 \begin{enumerate}
\item [\textbf{(P*):}]
\item[(a)] Every  two vertices of $G$ which are in $X_i$ for some $i>1$ 
have at least
$\frac{n}{4}+\frac{5n}{8p}$ common neighbours,
\item[(b)] two vertices in different  partition elements  have fewer than 
$\frac{n}{4}+\frac{5n}{8p}$ common neighbours, and
\item[(c)] for $i>1$, every  graph induced by the union of $X_i$  and a pair of vertices outside it contains a $P_4$, and a cycle of length at most four.   
\end{enumerate}

We will prove the following two assertions:
\begin{enumerate}
\item[$(\alpha)$] Every graph has  at most p partitions (up to order) into $X_1,...,X_p$ 
such that $G[X_i]$ is in ${\mathcal F}_i$ and $|X_i-\frac{n}{p}|< n^{1-\mu}$ for all $i$,   and  (P*) holds.
\item[$(\beta)$]  For large $n$, the following holds uniformly over partitions $X_1,\dots,X_p$
such that $|X_i-\frac{n}{p}|< n^{1-\mu}$ for every $i$:
the number of  graphs for which  $G[X_i]$ is in ${\mathcal F}_i$ for every $i$ is $\Theta(1)$ times 
the  number of graphs for which 
$G[X_i]$ is in ${\mathcal F}_i$ for every $i$ and in addition  (P*) holds. 
\end{enumerate} 
These two together prove our claim.  For let us say that a partition of a graph $G$ into 
$X_1,...,X_p$ is {\em good} if
$G[X_i]$ is in ${\mathcal F}_i$ and $|X_i-\frac{n}{p}|< n^{1-\mu}$ for all $i$, and {\em great} if it is good and also satisfies (P*).  
The number of graphs with a good partition is at least the number of graphs with a great partition, and by 
$(\alpha)$ this is  at least  $1/(p+1)!$ times the number
of great (graph,partition) pairs; and then $(\beta)$ shows that 
the number of great (graph,partition) pairs is at least some constant
times the number of good (graph,partition) pairs.  Thus the number of graphs with a good partition is
at least some constant times the number of good (graph,partition) pairs.  The reverse inequality is trivial.

Let us fixed a partition  $(X_1,\dots,X_{p})$ of $V(G)$ with
$|X_i-\frac{n}{p}|< n^{1-\mu}$ for all $i$. 
We let $C_1$ be all  choices of edges  within the partition elements 
such that $G[X_i] \in {\mathcal F}_i$.  We let $C_2$ be all choices of a set of edges between the partition elements. We note that 
there are $|C_1||C_2|$ graphs which permit this partition, as we can pair any choice from $C_1$ with  any choice from $C_2$.  

We first prove $(\alpha)$.
Consider a choice of $C_1$ and $C_2$  such that $G[X_i] \in {\mathcal F}_i$ for all $i$  and (P*) holds. 
For $i\ge2$, (a) and (b) tell us that for any other partition  $Y_1,...,Y_p$ of $G$ 
such that $G[Y_i] \in {\mathcal F}_i$  and (P*) holds,  $X_i$ lies within some $Y_j$.
 As $|Y_i-\frac{n}{p}|< n^{1-\mu}$, our bound on the size of the $X_i$ tell us they lie in distinct  $Y_j$. A symmetric argument tells us that each 
$Y_j$ for $j\ge2$ lies in some $X_i$. 

Now if  for $i,j>2$, $X_i$ lies in $Y_j$ then by (a) applied to the $Y_i$ and (b) applied to the $X_j$ we have $X_i=Y_j$. 
Reindexing we can assume that $X_i=Y_i$ for $p-2$ values of $i$ all greater than $1$. 
If we do not obtain exactly the same partition (up to order) then  $Y_1$ must  properly contains an $X_i$ for some $i>1$. 
Since  every graph in $\mathcal F_1$ is either $P_4$-free or has  a complement with girth at least  five, (c) tells us that $X_i=Y_1-v$ for some vertex $v$. 
Thus $Y_i=X_1-v$.  Applying  (a) to  $X_1-v$  and then (b) we see that $X_1-v$ must lie in the same part in any partition 
satisfying (P*).  So,  up to order the partition is fixed except for  the part containing $v$ .

It remains to show $(\beta)$.
To this end,
we note  that $C_1$ is actually an independent choice for each $i$ of  a graph on $X_i$ in ${\cal F}_i$.

For every choice of $C_1$, we can choose an element  from $C_2$ uniformly 
at random by choosing each edge joining vertices in different partition elements independently with probability $\frac{1}{2}$. 
We note that upon doing so, given a set of three vertices $u,v,w$  which is not contained in any $X_i$,
 the probability that $w$ is  a common neighbour of  $u$ and $v$  is at most $\frac{1}{2}$ if $w$  lies in the same partition element as 
one of $u$ or $v$ and exactly $\frac{1}{4}$ otherwise. 
By hypothesis, for large $n$ and $i\ge2$, $G[X_i]$ has minimum codegree $\frac{15|X_i|}{16}$.
So the expected number of common neighbours of two vertices 
is at most  $\frac{n}{4}+\frac{n}{2p}+n^{1-\mu}$ if they are in different partition elements and at least 
$\frac{n}{4}+\frac{45n}{64p}-n^{1-\mu}$ if they both lie in $X_i$ for some $i\ge 2$. 
So,  for every choice  of $C_1$, 
 $n \choose 2$ applications of the Chernoff Bound\cite{C81}, one for each pair of vertices, show that for large $n$ the proportion of choices  for  $C_2$ for which one of  (P*)(a) or (b) 
 fails is $O(1/n)$.

Finally, consider an $X_i$ for $i>2$  and two vertices $a$ and $b$ outside $X_i$. We partition $X_i$ into $\frac{|X_i|}{2}$ pairs of vertices. 
For each pair $\{c,d\}$, there is a choice of edges between this pair and $\{a,b\}$ for which these four vertices yield a $P_4$
(the choice depends on whether $ab$ and $cd$ are edges). Thus 
the probability that none of these sets of four vertices induces a $P_4$ is less than $(\frac{15}{16})^\frac{n}{3p}$. 
Since there are fewer than $n^2$ choices for $(a,b)$ and only $p$ choices for $X_i$, it follows that for every choice of $C_1$, 
the proportion of choices of $C_2$ for which (P*)(c) fails is $O(1/n)$. 
In the same vein there is a choice of edges between $\{a,b\}$ and $\{c,d\}$ yielding  a cycle within $G(\{a,b,c,d\})$ So a similar 
 argument shows that for all but a $O(1/n)$ fraction of choices for $C_2$, 
for each $i\ge 2$, the union of $X_i$ and any two vertices outside it  contains  a $C_3$ or $C_4$.

This proves our claim. 
\end{proof}

\section{Typical $C_6$-free graphs}
\label{C6}

In this section, we will prove Theorem \ref{C6.thm}.
By  Claim \ref{C6greatclaim} it is enough to show that the bad $C_6$ free graphs whose vertices have a partition into  $Bad,X_1,X_2$ with $|Bad|<Q$
as set out in that claim, are a $o(1)$ proportion of the $C_6$-free graphs. 

Our first task is to split the vertices of $Bad$ between $X_1$ and $X_2$. 
We choose a  constant $\mu<<\frac{1}{B}$ sufficiently small.  We  say that a vertex is {\it extreme} on $X_i$ if  $min(|N(v) \cap X_i|, |X_i-N(v)|)$ is not at least $\mu n$.
We note that  since $X_2$ is stable, and $\Delta( \overline{G[X_1]})<\frac{n}{\log \log n}, $ every vertex in $X_i$ is extreme on $X_i$.

Now, if there is a vertex $v$   in $Bad$ which is  not extreme on  $X_2$ and has more than $\frac{n}{\log \log \log n}$
nonneighbours in $X_1$ then  iteratively applying the fact that $S(\overline{G[X_1]})<3n^{9/10}$, , we can find 
$\frac{n}{3\log \log \log n}$ disjoint pairs of its nonneighbours  in $X_1$ such that the vertices of every pair are either nonadjacent or have a common nonneighbour.
Since $\Delta(\overline{G[X_1]})<\frac{n}{\log \log n}$, there is a family ${\cal K}$ of more than $ \frac{\log \log n}{4\log \log  \log n}$ disjoint subsets of $X_1$ 
such that for each $K \in {\cal K}$, $\overline{G[K \cup\{x\}]}$ is a cycle of length three or four. Since $v$ is not extreme on $X_2$, it follows that the number of choices for the edges 
from $K$ to $X_2$ is $2^{n/2-\Omega(n)}$ and hence the number of choices for the edges between $X_1$ and $X_2$ is $2^{\frac{{n \choose 2}}{2}-\omega(n)}$. 
Since the number of choices for $Bad$ and the edges leaving it is at most $2^{(Q+1)n}$ we see we  need only consider bad graphs for which this does not occur. 

So, we need only consider graphs where every vertex of $Bad$ is extreme  on at least one of $X_1$ and $X_2$.
We let $X^*_1$ be. the union of $X_1$ and the vertices of $Bad$ which are extreme on $X_1$ and $X^*_2=V-X^*_1$. 

By the results of the last section (applied to $G^*$,  it is enough to show that for
every choice of a partition  $X^*_1, X^*_2$, the number of bad graphs which have such a partition with this choice of $X^*_2$ is $o(1)$ of the
number of good graphs for which this is a certifying partition (where $X^*_2$ is stable and $X^*_1$ is the complement of a graph of girth at least 5). . 

In order to do so, we associate each  graph   with a choice of edges  within $X^*_1$. and $X^*_2$.
For a good graph  $G$ this choice is simply $G[X^*_1]$ and $G[X^*_2]$.  For a bad graph  it is obtained from $G[X_1]$ and $G[X_2]$ by adding edges 
so that the vertices of $Bad$  in $X^*_1$ are universal on $X^*_1$ and removing edges so the vertices of  $Bad$ in $X^*_2$
are isolated in $G[X^*_2]$. 

There are $2^{|X^*_1||X^*_2|}$ choices for a good graph which extends a given set of edges within a partition. 
We need to   show that there are $o(2^{|X^*_1||X^*_2|})$ choices of  bad graphs associated  to any such choice. 

There are  fewer than $(2n)^B$ choices for $Bad$, and since the vertices of $Bad$ are extreme in the partition. element which contains them,
given such a choice there are fewer than $2^B2^{B\sqrt{\mu }n}$ choices for the edges 
within the partition elements for bad graphs associated with the choice of edges for a specific good graph.

Now, if in the choice of edges  for a bad graph,
$X^*_1$ is not the complement of a graph of girth 5, then considering $\frac{n}{7}$ disjoint stable sets of size three in $X^*_2$, we see that there 
is some $\alpha$ such that there are   fewer than $2^{|X^*_1||X^*_2|-\alpha n}$ choices for the edges between $X^*_1$ and $X^*_2$
for a bad graph extending such a choice within the partition.  Since we are free to make $\mu<<\frac{\alpha^2}{B^2}$
we see we need only consider bad graphs where $X^*_1$ induces a graph  whose complement has girth  at least 5. 

In the same vein,   if in the choice for a bad graph,
$X^*_2$ is not a stable set, then since it is not a clique  it must contains a $P_3$ or an $\overline{P_3}$, so considering ${\cal W}_1$ and ${\cal W}_2$,   we see that there 
is some $\alpha$ such that there are  fewer than $2^{|X^*_1||X^*_2|-\alpha n}$ choices for the edges between $X^*_1$ and $X^*_2$ in a bad graph extending such a  choice within the partition. since we are free to make $\mu<<\frac{\alpha^2}{B^2}$
we see we need only consider bad graphs where $X^*_2$ is a stable set.

But a graph for which   $X^*_1$ induces a graph  whose complement has girth  at least 5 and  $X^*_2$ is a stable set, is good not bad.

\section{Typical $C_{2l}$-free graphs: $ l>3$}
\label{C2l}

In this section, we will prove Theorems \ref{C2l.thm}, \ref{C8.thm}, and \ref{C10.thm}.
By  Claim \ref{greatclaim}  it is enough to show that the bad $C_{2l}$ free graphs whose vertices have a partition into  $Bad,X_1,X_2,..X_{l-1}$ with $|Bad|<(\log n)^6$ and $X_i$ a clique for $i>2$
as set out in that claim, are a $o(1)$ proportion of the $C_{2l}$-free graphs. We label the partition elements so that for $i>1$, $X_i$ is a clique. 

Again our  first task is to split the vertices of $Bad$ between $X_1,X_2,...,X_{l-1}$. 
We again choose a  constant $\mu<<\frac{1}{B}$ sufficiently small.  We  say that a vertex is {\it extreme} on $X_i$ if  $min(|N(v) \cap X_i|, |X_i-N(v)|)$ is not at least $\mu n$.
We note that  since $X_i$ is either a clique or has a core which contains  at least $\frac{n}{l-1}-\frac{3n}{\sqrt{\log n}}$ vertices in components of the complement of 
size at most $(\log n)^3$,  every vertex of $X_i$ is extreme on $X_i$. 

Our first step is handle graphs  where some  vertex $v$ of $Bad$   is extreme on no  $X_i$.
We recall that  $X_1$ has a core which contains  at least $\frac{n}{l-1}-\frac{3n}{\sqrt{\log n}}$ vertices in components of the complement of 
size at most $(\log n)^3$. Thus,  the fact $v$ has at least $\mu n$ nonneighbours in $X_1$  means that we can find $\frac{\mu n}{3(l-1) (\log n)^3}$ triples of vertices 
from $X_1$ such that for each such triple $K$, $G[K \cup \{v\}]$ is a $P_4$ or the disjoint union of a $P_3$ and a vertex. 
Now, $C_{2l}$ can be partitioned into either of these graphs and $l=2$ ordered  edges, $\{e_j| j \in [l-1]-i\}$. 

For $j>1$, the fact that $v$ is not extreme on the clique $X_j$ means we can choose $\frac{\mu n}{2}$ ordered edges of $X_j$ such that $v$ has any required neighbourhood on 
each of these edges. So mimicing the proof of Claim \ref{c22},  it follows that the number of choices for the edges between the $X_i$ for which  such a $v$ exists 
 is $2^{(1-\frac{1}{l-1}){n \choose 2}-\omega(\frac{n^2}{(\log n)^6})}$. 
Hence we need only consider bad graphs for which this does not occur. 

So, we need only consider graphs where every vertex of $Bad$ is extreme  on at least one of $X_1$ and $X_2$.
We let $X^*_i$ be the union of $X_i$ and the vertices of $Bad$ which are extreme on $X_i$ but on none of $\{X_j|j<i\}$. 

By the results of  Section \ref{counting} it is enough to show that for
every choice of a partition  $X^*_1,...,X^*_{l-1}$, the number of bad graphs which have such a partition with is $o(1)$ of the
number of good graphs for which this is an $H$-freeness witnessing partition certified by $l-2$ families consisting of all cliques 
and one other family ${\cal F}_1$. 

In order to do so, we associate each  graph   with a choice of edges  within  every $X^*_i$.
For a good graph  $G$ this choice is simply the  $G[X^*_i]$.  For a bad graph  it is obtained from  the $G[X_i]$  by adding edges 
so that the vertices of $Bad$ are universal for the partition element they are in. 

Letting $m$ be the number of pairs of vetices not lying in the same partition element, there are $2^m$ choices for a good graph which extends a given set of edges within a partition. 
We need to   show that there are $o(2^m)$ choices of  bad graphs associated  to any such choice.

We now grow a family of disjoint  (atypical) sets  ${\cal A}'''$ each  element  $S$ consisting of either (i) of a vertex in some $X^*_i$  $i>1$ which is extreme on some
$X^*_j$,  $i \neq j$ or (ii) four vertices in $X^*_1$ which induce a graph $J$  such that for  some $j>1$ and subset  $N$ of $S$, the number of vertices of 
 $X_j$ which are adjacent to all of $N$ and none of $N-S$ is less than $\frac{n}{2^{20}(l-1)}$. We let $A''' =\cup\{S|S \in {\cal A}'''\}$.
 
 Since every vertex of $X^*_i$ is extreme on  $X^*_i$, there is a an $\alpha>0$ such that for any such $S$, the number of choices for edges incident to $S$ is 
 $2^{(1-\frac{1}{l-1}-\alpha)n|S|}$.  Since the $G[X_i]$ are $P_4$ free and $|Bad|< (\log n)^6$  It follows that we need only consider bad graphs for which $|A'''|<(\log n)^9$. 

Now, mimicing the proof of Claim \ref{c22} we can show that  the number of bad graphs for which 
 $X^*_1-A'''$ contains 4 vertices inducing a graph  $J$ such that $C_{2l}$ can be partitioned into $J$ 
and $l-2$ edges  is a $o(1)$ proportion of the $C_{2l}$-free graphs, so we can assume this is not the case.  

For each vertex $v$ in $X^*_i$ for some $i>2$ we  set $t_v=|X_i -N(v)|$ we choose $v^* \in V-X^*_1-A$  so as to maximize $t(v^*)$.
If $t(v^*)>2$ We can find $\frac{t(v^*)}{3}$ edges of $X_i$ such that $v$ has any required neighbourhood on 
each of these edges. 

We recall that  $X_1$ has a core which contains  at least $\frac{n}{l-1}-\frac{3n}{\sqrt{\ln \ln  n}}$ vertices in components of the complement of 
size at most $(\ln n)^3$. Thus,  the fact $v$ has at least $\mu n$ nonneighbours in $X_1$  means that we can find $\frac{\mu n}{3(l-1) (\ln n)^3}$ triples of vertices 
from $X_1$ such that for each such triple $K$, $G[K \cup \{v\}]$ is a $P_4$ or the disjoint union of a $P_3$ and a vertex. 
Now, $C_{2l}$ can be partitioned into either of these graphs and $l=2$ ordered  edges, $\{e_j| j \in [l-1]-i\}$. 

The fact that $v^*$ is not extreme on the clique $X_j$ , $j \neq i,1$ means  we can choose $\frac{\mu n}{2}$ ordered edges of $X_j$ such that $v$ has any required neighbourhood on 
each of these edges. 

Mimicing the proof of Claim \ref{c22},  it follows that the number of choices for the edges between the $X^*_i$ for which  such a $v$ exists 
 is $2^{m-\Omega( \frac{t(v^*)n}{(\log n)^3})}$. On the other hand the choices for the edges from $Bad$ within the $X^*_i$  is at most ${ n \choose t+|Bad|}^{|Bad|}$.
 Since $|Bad|<( \log n)^6$ it follows that we are done onless $t(v^*)<3$ and $A'''$ is empty.
 In this case there are $o(2^{(\log n)^{15}})$ choices for the vertices of $Bad$ and the edges from them which lie in the partition elements. 

We note that because of the strength of its core, $G[X_1]$ contains at least $\frac{n}{2 (\ln n)^3}$ disjoint $P_3$s. So, mimicing the proof of Claim \ref{c22new}, we obtain that  the number of choices for the edges between the partition elements for a choice of $C_1$ such that for some   
$j>1$, $G[X_j]$  contains a $P_3$
 or an $\overline {P_3}$ is $2^{m-\omega(\frac{n}{(\ln n)^3})}$. So we are done in this case, and we need only count bad graphs for which  no such $P_3$ or  
$\overline{P_3}$ exists. 

 But now since for $j>1$,$G[X_i]$ is a clique, so is $G[X^*_i]$.
 Mimicing the proof of Claim \ref{c22new}, we  also obtain that  the number of choices for the edges between the partition elements for a choice of $C_1$ where $G[X_i]$ is a clique for $i>1$ but 
 the partition does not show $G$ is good, is $2^{m-\omega(n)}$.

\end{document}